\theoremstyle{definition}
\numberwithin{equation}{section}
\newcommand{\ncom}{\newcommand}
\ncom{\nno}{\nonumber}
\ncom{\vone}{\vskip 2ex}
\ncom{\norm}{\|\;\;\|}
\ncom{\vspan}[1]{{{\rm\,span}\{ #1 \}}}
\ncom{\dm}[1]{ {\displaystyle{#1} } }
\ncom{\ri}[1]{{#1} \index{#1}}
\newtheorem{theorem}{\bf Theorem}[section]
\newtheorem{remark}{\bf Remark}[section]
\newtheorem{lemma}{Lemma}[section]
\newtheorem{corollary}{Corollary}[section]
\newtheorem{definition}{Definition}[section]
\newtheoremstyle
    {remarkstyle}
    {}
    {11pt}
    {}
    {}
    {\bfseries}
    {:}
    {     }
    {\thmname{#1} \thmnumber{#2} }
\theoremstyle{remarkstyle}
\def \R{{{\rm I{\!}\rm R}}}
\def \C{{{\rm I{\!\!\!}\rm C}}}
\def\R{{\mathbb R}}
\def\C{{\mathbb C}}
\def\E{{\mathbb E}}
\def\P{{\mathbb P}}
\def\Z{{\mathbb Z}}
\begin{document}
\title{ f\lowercase{ractional} N\lowercase{egative} B\lowercase{inomial and} P\lowercase{olya} P\lowercase{rocesses}}

\author{\small P. Vellaisamy and A. Maheshwari}
\address{\small Department of Mathematics,\\
 Indian Institute of Technology Bombay, Powai, Mumbai 400076, INDIA.}
 \email{pv@math.iitb.ac.in, aditya@math.iitb.ac.in}
 \thanks{The research of AM was supported by UGC, Govt. of India grant F. 2-2/98 (SA-1)}
 \subjclass[2010]{Primary: 60G22; Secondary: 60G55, 60E07, 60G51}
 \keywords{fractional negative binomial process, long-range dependence, fractional {P}olya process, fractional {P}oisson process, infinite divisibility, L\'evy process, $pde$'$s$.}
\begin{abstract}
In this paper, we define a fractional negative binomial process (FNBP) by replacing the Poisson process by a fractional Poisson process (FPP) in the gamma subordinated form of the negative binomial process. First, it is shown that the one-dimensional distributions of the FPP are not infinitely divisible.  The long-range dependence of the FNBP, the short-range dependence of its increments and the infinite divisibility of the FPP and the FNBP are investigated. Also, the space fractional Polya process (SFPP) is defined by replacing the rate parameter $\lambda$ by a gamma random variable in the definition of the space fractional Poisson process. The properties of the FNBP and the SFPP and the connections to $pde$'$s$ governing the density of the FNBP and the SFPP are also investigated.
\end{abstract}

\maketitle

\vspace*{-0.7cm}
\section{Introduction}
\noindent The fractional generalizations of classical stochastic processes have received considerable attention by researchers in the recent years. These generalizations have found applications in several disciplines such as control theory, quantum physics, option pricing, actuarial science and reliability. For example, the fractional Poisson processes (FPPs) have been used recently in \cite{laskapp} to define a new family of quantum coherent states as well as the fractional generalization of Bell polynomials, Bell numbers and Stirling's numbers of the  second kind. Also, a new renewal risk model, which is non-stationary and has the long-range dependence property, is defined using the FPP in \cite{biardapp}. In this paper, we define a fractional generalization of the negative binomial process and a space fractional version of the Polya process. Quite recently, a fractional generalization of the negative binomial process is defined in \cite{beghin8may} and \cite{BegClau14}. We introduce here a different generalization of the negative binomial process. It is known that the negative binomial process can be viewed as a Poisson process time-changed by a gamma subordinator. Let $\alpha>0,~p>0$ and $\{\Gamma(t)\}_{t\geq0}$ be a gamma process, where $\Gamma(t)\sim G(\alpha,pt)$ which denotes the gamma distribution with scale parameter $\alpha^{-1}$ and shape parameter $pt$. Let
$$Q(t,\lambda)=N(\Gamma(t),\lambda),~~~t\geq0,$$ 
where $\{N(t,\lambda)\}_{t\geq0}$ is a Poisson process with intensity $\lambda>0$. Then $\{Q(t,\lambda)\}_{t\geq0}$ is called the negative binomial process and $Q(t,\lambda)\sim\text{NB}(pt,\eta)$, the negative binomial distribution with parameters $pt$ and $\eta=\lambda/(\alpha+\lambda)$ (see Section \ref{section-dist}). For $0<\beta<1$, let $\{D_{\beta}(t)\}_{t\geq0}$ be a $\beta$-stable subordinator with index $\beta$, and $\{E_{\beta}(t)\}_{t\geq0}$ be its (right-continuous) inverse stable subordinator defined by
\begin{equation}\label{hitting-time-process}
E_{\beta}(t) = \inf\{s>0: D_{\beta}(s)>t\},~~~t>0.
\end{equation}
\noindent A natural generalization of $\{Q(t,\lambda)\}_{t\geq0}$ is to consider 
\begin{equation*}
Q_{\beta}(t,\lambda)=N_{\beta}(\Gamma(t),\lambda),
\end{equation*} where $\{N_{\beta}(t,\lambda)\}_{t\geq0}$ is the FPP (see \cite{lask,mnv}). We call $\{Q_{\beta}(t,\lambda)\}_{t\geq0}$ the fractional negative binomial process (FNBP). We will show that this process is different from the FNBP discussed in \cite{beghin8may} and \cite{BegClau14}. It is known that the Polya process is obtained by replacing the parameter $\lambda$ by a gamma random variable in the definition of the Poisson process $\{N(t,\lambda)\}_{t\geq0}$. Let $\Gamma\sim G(\alpha,p)$ and  
$W^{\Gamma}(t)=N(t,\Gamma)$, where $\Gamma$ is independent of $\{N(t,\lambda)\}_{t\geq0}$. Then $\{W^{\Gamma}(t)\}_{t\geq0}$ is called the Polya process. However, a fractional
version of the Polya process has not been addressed in the literature before. Recently, in \cite{sfpp}, a space fractional Poisson process $\{\widetilde{N}_{\beta}(t,\lambda)\}_{t\geq0}$, where $\widetilde{N}_{\beta}(t,\lambda)=N(D_{\beta}(t),\lambda)$, is introduced and its properties are investigated. We here introduce, as a  fractional generalization of the Polya process, the space fractional Polya process (SFPP) defined by $\widetilde{W}^{\Gamma}_{\beta}(t)=\widetilde{N}_{\beta}(t,\Gamma)$ for $t\geq0$.

\vone \noindent  The paper is organized as follows. In Section \ref{secprelim}, some preliminary notations and results are stated. In Section \ref{secfpp}, we discuss the infinite divisibility of the FPP $\{N_{\beta}(t,\lambda)\}_{t\geq0}$ and also that of  $\{N(E_{\beta}^{\ast n}(t),\lambda)\}_{t\geq0}$, where $E_{\beta}^{\ast n}(t)$ is the $n$-iterated process of inverse stable subordinators. In Section \ref{secnbp}, we define the FNBP, compute its one-dimensional distributions and discuss their properties. It is shown that their one-dimensional distributions are not infinitely divisible, and they solve certain fractional $pde$'$s$. It is also shown that the FNBP exhibits the long-range dependence property and the increments of the FNBP possess the short-range dependence property. In Section \ref{secpp}, we define the SFPP and show that it has stationary increments and is stochastically continuous. However, it does not have independent increments and hence is not a L\'evy process. The fractional $pde$'$s$ 
governed by the SFPP with respect to both the variables $t$ and $p$ are also discussed.
\vone

\section{Preliminaries}\label{secprelim}
\noindent In this section, we introduce the notations and the results that will be used later. Let $\Z_{+}=\{0,1,\ldots,\}$ be the set of nonnegative integers.
\subsection{Some special functions}
We start with some special functions that will be required later.\\
(i) The Mittag-Leffler function $L_{\beta}(z)$ is defined as (see \cite{erde3})
\begin{equation}\label{Mittag-Leffler-function}
 L_{\beta}(z)=\sum\limits_{k=0}^{\infty}\frac{z^{k}}{\Gamma(1+\beta k)},\,\,\,\beta,z\in \C \text{ and Re}(\beta)>0.
\end{equation}
 \noindent (ii) The M-Wright function $M_{\beta}(z)$ (see \cite{gorenmain,mainardibook}) is defined as 
\begin{equation*}
 M_{\beta}(z)=\sum\limits_{n=0}^{\infty}\frac{(-z)^{n}}{n!\Gamma(-\beta n+(1-\beta))}=\frac{1}{\pi}\sum\limits_{n=1}^{\infty}\frac{(-z)^{n-1}}{(n-1)!}\Gamma(\beta n)\sin(\pi\beta n), ~z\in \mathbb{C},~0<\beta<1.
\end{equation*}
\noindent Let $p,q\in \Z_{+}\backslash\{0\}$. Also, for $0\leq i\leq p,~0\leq j\leq q$, let $a_{i},b_{j},z\in\mathbb{C}$.\\
(iii) Let $\alpha_{i}$ and $\beta_{j}$ be reals. The generalized Wright function is defined, under certain conditions (see \cite[p. 3]{wright}), as
 \begin{equation}\label{psifunction}
 {}_{_{p}}\psi_{_{q}}\equiv{}_{_{p}}\psi_{_{q}}\bigg[z~\bigg|\begin{matrix}
(a_{i},\alpha_{i})_{1,p}\\ (b_{j},\beta_{j})_{1,q}
\end{matrix}\bigg]=\sum_{k=0}^{\infty}\dfrac{\underset{i=1}{\overset{p}{\prod}}  \Gamma(a_{i}+\alpha_{i}k)}{\underset{j=1}{\overset{q}{\prod}} \Gamma(b_{j}+\beta_{j}k)}\bigg(\frac{z^{k}}{k!}\bigg).
\end{equation}
 (iv) Let $A_{i}$ and $B_{j}$ be positive reals. The $H$-function \cite[Section 1.2]{matsax} is defined in terms of the Mellin-Barnes type integral as 
\begin{equation}\label{mellin-H}
H^{m,n}_{p,q}(z)\equiv H^{m,n}_{p,q}\bigg[z~\bigg|\begin{matrix}
(a_{i},A_{i})_{1,p}\\ (b_{j},B_{j})_{1,q}
\end{matrix}\bigg]=\frac{1}{2\pi i}\int_{L}\chi_{p,q}^{m,n}(s)z^{-s}ds,
\end{equation}
where $z\neq 0$ and $z^{-s}=\exp[-s\{\ln|z|+i\arg z\}]$. Here, $\ln|z|$ represents the natural logarithm of $|z|$ and $\arg(z)$ is not necessarily the principal value.  Also, an empty product is interpreted as unity and
\begin{equation*}
 \chi_{p,q}^{m,n}(s)=\frac{\underset{i=1}{\overset{m}{\prod}}\Gamma(1-a_{i}-A_{i}s)\underset{j=1}{\overset{n}{\prod}}\Gamma(b_{j}+B_{j}s)}{\underset{i=m+1}{\overset{p}{\prod}} \Gamma(a_{i}+A_{i}s)\underset{j=n+1}{\overset{q}{\prod}}\Gamma(1-b_{j}+B_{j}s)},
\end{equation*}
where $m,n,p\text{ and }q$ are nonnegative integers such that $0\leq m\leq p,~1\leq n\leq q$ and
\begin{equation*}
 A_{i}(b_{j}+l)\neq B_{j}(a_{i}-k-1),
\end{equation*}
for $l,k\in\Z_{+},1\leq i\leq m\text{ and}~1\leq j\leq n$. The contour $L$ in \eqref{mellin-H} runs from $c-i\infty$ to $c+i\infty$ and separates the poles $s_{j,l}=-\big(\frac{b_{j}+l}{B_{j}}\big)$ of $\Gamma(b_{j}+B_{j}s)$ from the poles $w_{i,k}=\left(\frac{1-a_{i}+k}{A_{i}}\right)$ of $\Gamma(1-a_{i}-A_{i}s)$, where $1\leq i\leq m$ and $1\leq j\leq n$.\\
It is known that, under certain conditions (see \cite[eq. (5.2)]{wright}), the generalized Wright function ${}_{p}\psi_{q}$ given in \eqref{psifunction} satisfies
\begin{equation}\label{h-psi-relation}
{}_{_{p}}\psi_{_{q}}\bigg[z\bigg|\begin{matrix}
(a_{i},A_{i})_{(1,p)}\\ (b_{j},B_{j})_{(1,q)}
\end{matrix}\bigg]=H_{p,q+1}^{1,p}\bigg[-z\bigg|\begin{matrix}
(1-a_{i},A_{i})_{(1,p)}\\ (0,1),(1-b_{j},B_{j})_{(1,q)}
\end{matrix}\bigg].
\end{equation}
\subsection{Some elementary distributions}\label{section-dist}
  Let $\{N(t,\lambda)\}_{t\geq0}$ be a Poisson process with rate $\lambda>0$, so that
\begin{equation*}
p(n|t,\lambda)=\mathbb P[N(t,\lambda)=n]=\frac{(\lambda t)^{n}e^{-\lambda t}}{n!},~~~~n\in\Z_{+}. 
\end{equation*}
For $\alpha>0,~p>0$, let $\{\Gamma(t)\}_{t\geq0}$ be a gamma process, where $\Gamma(t)\sim G(\alpha,pt)$ with density
\begin{equation}\label{gammaden}
g(y|\alpha,pt)=\frac{\alpha^{pt}}{\Gamma{(pt)}}y^{pt-1}e^{-\alpha y},~~~~y>0.	 
\end{equation}
We say a random variable $X$ follows a negative binomial distribution with parameters $\alpha>0$ and $0<\eta<1$, denoted by $\text{NB}(\alpha,\eta)$, if
\begin{equation}\label{nbpmf}
 \P[X=n]=\binom{n+\alpha-1}{n}\eta^{n}(1-\eta)^{\alpha},\,\,\,\,\,n\in\Z_{+}.
\end{equation}
When $\alpha$ is a natural number, then $X$ denotes the number of successes before the $\alpha$-th failure in a sequence of Bernoulli trials with success probability $\eta$. \\
We say $X$ follows a logarithmic series distribution with parameter $\eta,$ denoted by $LS(\eta),$ if 
\begin{equation}
 \P[X=n] = \frac{-\eta^{n}}{n\ln(1-\eta)}, \quad n \in \Z_{+}\backslash\{0\}.
\end{equation}
 Let $\{D_{\beta}(t)\}_{t\geq0}$ be a $\beta$-stable subordinator. Then the density of $D_{\beta}(t)$ is (see \cite[eq. (4.7)]{gorenmain})
 \begin{equation}\label{stable-density}
  g_{_{\beta}}(x,t)=\beta tx^{-(\beta+1)}M_{\beta}(tx^{-\beta}), ~~~x>0.
 \end{equation}

 \noindent Let $\{E_{\beta}(t)\}_{t\geq0}$ be an inverse $\beta$-stable subordinator defined in \eqref{hitting-time-process}. Then the density of $E_{\beta}(t)$ is (see \cite[eq. (5.7)]{gorenmain})
 \begin{equation}\label{inverse-stable-density}
  h_{_{\beta}}(x,t)=t^{-\beta}M_{\beta}(t^{-\beta}x),~~~x>0.
 \end{equation}

\subsection{Some fractional derivatives}
Let $AC[a,b]$ be the space of functions $f$ which are absolutely continuous on $[a,b]$ and 
\begin{equation*}
 AC^{n}[a,b]=\left\{f:[a,b]\rightarrow \mathbb{R};\frac{d^{n-1}}{dt^{n-1}}f(t)\in AC[a,b]\right\},
\end{equation*}
where $AC^{1}[a,b]=AC[a,b]$.
\begin{definition}
 Let $m\in\Z_{+}\backslash\{0\}$ and $\nu\geq0$. If $f(t)\in AC^{m}[0,T]$, then the (left-hand) Riemann-Liouville (R-L) fractional derivative  $\partial_{t}^{\nu}f$ of $f$ (see \cite[Lemma 2.2]{KilSriTru06}) is defined by (with $\partial^{0}_{t}f=f$)
 \begin{equation}\label{rld}
  \partial_{t}^{\nu}f(t):= \begin{cases} 
     \hfill \dfrac{1}{\Gamma(m-\nu)}\dfrac{d^{m}}{dt^{m}}\displaystyle\int_{0}^{t}\dfrac{f(s)}{(t-s)^{\nu-m+1}}ds, \hfill    &m-1<\nu<m , \\ & \\
      \dfrac{d^{m}}{dt^{m}}f(t), \,\,\,\,\,\,\,\,\,\,\,  \nu=m. &
  \end{cases}
 \end{equation}
\end{definition}
\begin{definition}
 Let $m\in\Z_{+}\backslash\{0\}$ and $\nu\geq0$. If $f(t)\in AC^{m}[0,T]$, then the (left-hand) Caputo fractional derivative $D_{t}^{\nu}f$ of $f$ (see \cite[Theorem 2.1]{KilSriTru06}) is defined by (with $D^{0}_{t}f=f$)
 \begin{equation}\label{cd}
  D_{t}^{\nu}f(t):= \begin{cases} 
     \hfill \dfrac{1}{\Gamma(m-\nu)}\displaystyle\int_{0}^{t}\dfrac{f^{(m)}(s)}{(t-s)^{\nu-m+1}}ds, \hfill    &m-1<\nu<m , \\ &\\
      \dfrac{d^{m}}{dt^{m}}f(t), \,\,\,\,\,\,\,\,\,\,\, \nu=m.&
  \end{cases}
 \end{equation}
\end{definition}
\noindent The relation between the R-L fractional derivative and the Caputo fractional derivative is (see \cite[eq. (2.4.6)]{KilSriTru06})
\begin{equation*}
 \partial_{t}^{\nu}f(t)=D_{t}^{\nu}f(t)+\displaystyle\sum\limits_{k=0}^{m-1}\frac{t^{k-\nu}}{\Gamma(k-\nu+1)}f^{(k)}(0^{+}),
\end{equation*}
where $f^{(k)}(0^+):=\lim_{t\rightarrow 0^+} \frac{d^{k}}{dt^{k}}f(t)$.

\section{Fractional Poisson process}\label{secfpp}

\noindent Let $0<\beta\leq1$. The fractional Poisson process (FPP) $\{N_{\beta}(t,\lambda)\}_{t\geq0}$, which is a generalization of the Poisson process $\{N(t,\lambda)\}_{t\geq0}$, is defined to be a stochastic process for which $p_{\beta}(n|t,\lambda)=\mathbb{P}[N_{\beta}(t,\lambda)=n]$ satisfies (see \cite{lask,main,mnv})
\begin{flalign}
&&D^{\beta}_{t}p_{_{\beta}}(n|t,\lambda) &= -\lambda\left[ p_{_{\beta}}(n|t,\lambda)-p_{_{\beta}}(n-1|t,\lambda)\right],\,\,\,\text{for } n\geq1,& \label{fpp-definition}\\
&&D^{\beta}_{t}p_{_{\beta}}(0|t,\lambda) &= -\lambda p_{_{\beta}}(0|t,\lambda)\nonumber,&
\end{flalign}
 $\text{with  }p_{_{\beta}}(n|0,\lambda)=1\text{ if }n=0 \text{ and is zero if }n\geq1.$ Here, $D^{\beta}_{t}$ denotes the Caputo fractional derivative defined in \eqref{cd}.
\noindent The {\it pmf} $p_{_{\beta}}(n|t,\lambda)$ for the FPP is given by (see \cite{lask,mnv}) 
\begin{equation}\label{fppd}
 p_{_{\beta}}(n|t,\lambda)=\frac{(\lambda t^{\beta})^n}{n!}\sum_{k=0}^{\infty}\frac{(n+k)!}{k!}\frac{(-\lambda
t^{\beta})^k}{\Gamma(\beta(k+n)+1)}. \end{equation}
 
\noindent Note that equation (\ref{fppd}) can also be represented as
\begin{equation*}
p_{_{\beta}}(n|t,\lambda) = \frac{(\lambda t^{\beta})^{n}}{n!}{}_{_{1}}\psi_{_{1}}\bigg[-\lambda t^{\beta}\bigg|\begin{matrix}
(n+1,1)\\ (n\beta+1,\beta)
\end{matrix}\bigg],
\end{equation*}
using the generalized Wright function defined in \eqref{psifunction}.\\
 The mean and the variance of the FPP are given by (see \cite{lask})
\begin{align}
\E [N_{\beta}(t,\lambda)] &= qt^{\beta},\label{fppmean} \\
\mbox{Var}[N_{\beta}(t,\lambda)] &=qt^{\beta}\left[1+qt^{\beta}\left(\frac{\beta B(\beta, 1/2)}{2^{2\beta-1}}-1\right)\right],\label{fppvar}
\end{align}
where $q=\lambda/\Gamma(1+\beta)$ and $B(a,b)$ denotes the beta function. An alternative form for Var[$N_{\beta}(t,\lambda)$] is given in \cite[eq. (2.8)]{BegOrs09} as
\begin{equation}\label{alternative-fppvar}
 \mbox{Var}[N_{\beta}(t,\lambda)]=q t^{\beta}+\frac{(\lambda t^{\beta})^{2}}{\beta}\left(\frac{1}{\Gamma(2\beta)}-\frac{1}{\beta\Gamma^{2}(\beta)}\right).
\end{equation}
Note \eqref{alternative-fppvar} follows from \eqref{fppvar} using the Legendre's duplication formula (see \cite[p. 22]{askey})
\begin{equation*}
\Gamma(2a)\Gamma(1/2)=2^{2a-1}\Gamma(a)\Gamma(a+1/2),~a>0.
\end{equation*}
\noindent It is also known that (see \cite{mnv}) when $0<\beta<1,$
\begin{equation}\label{N-of-E-beta-t}
N_{\beta}(t,\lambda)\stackrel{d}{=}N(E_{\beta}(t),\lambda), 
\end{equation}
where $\{E_{\beta}(t)\}_{t\geq0}$ is the inverse $\beta$-stable subordinator and is independent of $\{N(t,\lambda)\}_{t\geq0}$.\\
First we establish an important property of the FPP.
\begin{theorem}\label{fpp-id}
Let $0<\beta<1$. The one-dimensional distributions of the FPP $\{N_{\beta}(t,\lambda)\}_{t\geq0}$ are not infinitely divisible ($i.d.$).
\end{theorem}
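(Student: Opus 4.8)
The plan is to pass to the probability generating function (pgf) of $N_\beta(t,\lambda)$ and invoke Pringsheim's theorem. Multiplying \eqref{fppd} by $u^n$, summing over $n$, and relabelling $m=n+k$ collapses the double series into $\sum_m\big(\lambda t^\beta(u-1)\big)^m/\Gamma(\beta m+1)$, so that the pgf is
\[
G(u)=\E\!\left[u^{N_\beta(t,\lambda)}\right]=L_\beta\!\big(\lambda t^\beta(u-1)\big),
\]
an entire function of $u$. Two facts about $L_\beta=E_\beta$ for $0<\beta<1$ will be used: (i) $L_\beta$ has no real zeros, since $L_\beta(x)\ge1$ for $x\ge0$ while $L_\beta(-\lambda t^\beta)=P[N_\beta(t,\lambda)=0]>0$; and (ii) $L_\beta$ nevertheless has (infinitely many) zeros $z_1,z_2,\dots$, all necessarily non-real, with $|z_k|\to\infty$ — this is classical for Mittag-Leffler functions of index in $(0,1)$, and also follows from Hadamard's factorization theorem, since a zero-free entire function of finite order would equal $e^{Q}$ for a polynomial $Q$, which combined with $L_\beta(-\lambda t^\beta)=P[N_\beta(t,\lambda)=0]\to0$ as $t\to\infty$ (by \eqref{fppmean}) and the complete monotonicity of $s\mapsto L_\beta(-s)$ (from \eqref{N-of-E-beta-t}--\eqref{inverse-stable-density}) would force $Q$ to be linear, hence $E_\beta(1)$ degenerate, contradicting \eqref{inverse-stable-density}. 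Consequently $G$ vanishes exactly at the non-real points $u_k=1+z_k/(\lambda t^\beta)$, and $\rho_0:=\min_k|u_k|$ is attained with $\rho_0\in(0,\infty)$.

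Suppose now, for contradiction, that $N_\beta(t,\lambda)$ is i.d. Since it is $\Z_+$-valued, a classical theorem forces its pgf to be compound Poisson, $G(u)=\exp\big(\lambda_0(h(u)-1)\big)$ with $\lambda_0=-\log G(0)\ge0$ and $h$ a pgf on $\{1,2,\dots\}$; hence $\psi(u):=\log G(u)-\log G(0)=\lambda_0 h(u)$ has a Taylor expansion at $u=0$ with $\psi(0)=0$ and \emph{all} coefficients nonnegative. On the other hand $\psi=\log G-\log G(0)$ has logarithmic branch points precisely at the $u_k$ and is analytic elsewhere, so its Taylor series at $0$ has radius of convergence exactly $\rho_0\in(0,\infty)$. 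By Pringsheim's theorem, a power series with nonnegative coefficients and finite radius of convergence $\rho_0$ must be singular at the point $u=\rho_0$ on the positive real axis. But $\rho_0>0$ is real, the argument $\lambda t^\beta(\rho_0-1)$ is real, and $L_\beta$ has no real zeros, so $G$ is analytic and non-vanishing at $\rho_0$ and hence so is $\psi$ — a contradiction. Therefore $N_\beta(t,\lambda)$ is not i.d., for every $\beta\in(0,1)$ and every $t>0$.

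The computational parts (the pgf from \eqref{fppd} and the branch-point bookkeeping) are routine; the point that deserves care — and is the essential obstacle — is the existence of a non-real zero of $L_\beta$ for $0<\beta<1$, needed to make $\rho_0$ finite and off the positive real axis. I would either cite the theory of zeros of Mittag-Leffler functions or give the Hadamard/complete-monotonicity argument sketched above. By contrast, trying to detect non-infinite-divisibility through a negative cumulant of $N_\beta(t,\lambda)$ — computed from \eqref{fppmean}--\eqref{alternative-fppvar} together with the factorial moments of $N(E_\beta(t),\lambda)$, which equal $\lambda^k\,k!\,t^{k\beta}/\Gamma(1+k\beta)$ — seems to succeed only for part of the range (for instance the third cumulant stays nonnegative as $\beta\to0^+$), so it does not obviously yield the result uniformly in $\beta$, whereas the pgf/Pringsheim route does.
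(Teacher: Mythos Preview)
Your argument is correct and takes a genuinely different route from the paper. The paper argues probabilistically: from the self-similarity $E_\beta(ct)\stackrel{d}{=}c^\beta E_\beta(t)$ and the law of large numbers for the Poisson process it obtains $N_\beta(t,\lambda)/t^\beta\stackrel{\mathscr{L}}{\longrightarrow}E_\beta(1)/\lambda$, and then uses that limits of i.d.\ laws are i.d.\ together with the cited fact (from \cite{kumhit}) that $E_\beta(1)$ is not i.d. Your approach is analytic: you identify the pgf as $G(u)=L_\beta\big(\lambda t^\beta(u-1)\big)$, note that infinite divisibility of a $\Z_+$-valued law forces $\log G$ to have a Taylor series at $0$ with nonnegative coefficients, and reach a contradiction via Pringsheim's theorem and the fact that the zeros of $L_\beta$ are non-real while $L_\beta$ does have zeros. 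The paper's method has the advantage of transferring immediately to other time changes (and the paper reuses it verbatim for $N(E_\beta^{*n}(t),\lambda)$ and for the FNBP), whereas your method is more self-contained for this single statement, bypassing the external input that $E_\beta(1)$ is not i.d. The one place to tighten is the existence of a zero of $L_\beta$: your Hadamard sketch is sound in outline, but ``complete monotonicity forces $Q$ linear'' is not quite the operative mechanism --- cleaner options are the standard asymptotic $L_\beta(x)\sim\beta^{-1}\exp(x^{1/\beta})$ as $x\to+\infty$ (incompatible with $e^{Q(x)}$ for any polynomial $Q$ when $0<\beta<1$), or Marcinkiewicz's theorem applied to the characteristic function $e^{Q(i\cdot)}$ of $E_\beta(1)$, or simply a citation to the classical zero theory of Mittag-Leffler functions.
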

\begin{proof}
\noindent 
Since the sample paths of $\{D_{\beta}(t)\}_{t\geq0}$ are strictly increasing, the process $\{E_{\beta}(t)\}_{t\geq0}$ has continuous sample paths. Further,
\begin{equation*}
\P(E_{\beta}(t)\leq x) = \P(D_{\beta}(x)\geq t). 
\end{equation*}
\noindent It is well known that a $\beta$-stable process is self-similar with index $1/\beta$, that is,
\begin{equation*}
D_{\beta}(ct)\stackrel{d}=c^{1/\beta}D_{\beta}(t),~c>0. 
\end{equation*}
Hence,
\begin{equation*}
\begin{split}
 \P(E_{\beta}(ct)\leq x) = \P(D_{\beta}(x)\geq ct)&= \P\left(\tfrac{1}{c}D_{\beta}(x)\geq t\right)= \P\left(D_{\beta}\left(\tfrac{x}{c^{\beta}}\right)\geq t\right)\\
 &= \P\left(E_{\beta}(t)\leq \tfrac{x}{c^{\beta}}\right)= \P(c^{\beta}E_{\beta}(t)\leq x).
\end{split}
\end{equation*}
That is, 
\begin{equation}\label{self-similar-ht}
E_{\beta}(ct)\stackrel{d}=c^{\beta}E_{\beta}(t),
\end{equation}
showing that $E_{\beta}(t)$ is also self-similar with index $\beta.$\\
Observe now that $$N_{\beta}(t,\lambda) \stackrel{d}{=} N(E_{\beta}(t),\lambda) \stackrel{d}{=}N(t^{\beta}E_{\beta}(1),\lambda). $$ 
By the renewal theorem for the Poisson process,
\begin{equation*}
\lim_{t\rightarrow\infty}\frac{N(t,\lambda)}{t}=\frac{1}{\lambda},~~a.s. 
\end{equation*}
This implies, since $E_{\beta}(t)$ is independent of $\{N(t,\lambda)\}_{t\geq0}$,
\begin{align}\label{i.d.limit}
\lim_{t\rightarrow\infty}\frac{N(t^{\beta}E_{\beta}(1),\lambda)}{t^{\beta}} = E_{\beta}(1)\lim_{t\rightarrow\infty}\frac{N(t^{\beta}E_{\beta}(1),\lambda)}{t^{\beta}E_{\beta}(1)}= \frac{E_{\beta}(1)}{\lambda},~~ a.s.,
\end{align}
since $E_{\beta}(1)>0~$ $a.s.$
Hence, for $0<\beta<1,$
\begin{equation*}
\frac{N_{\beta}(t,\lambda)}{t^{\beta}}\stackrel{\mathscr{L}}\longrightarrow \frac{E_{\beta}(1)}{\lambda},
\end{equation*}
where $\stackrel{\mathscr{L}}\rightarrow$ denotes convergence in law.
Assume now that $N_{\beta}(t,\lambda)$ is $i.d.$ Then $N_{\beta}(t,\lambda)/t^{\beta}$ is also $i.d.$ for each $t$. Since the limit of a sequence of $i.d.$ random variables is also $i.d.$ (see \cite[Lemma 7.8]{sato}), it follows that $E_{\beta}(1)/\lambda$ or equivalently $E_{\beta}(1)$ is $i.d.$, which is a contradiction since $E_{\beta}(t)$ is not $i.d.$ for $t>0$ (see \cite{kumhit}). Hence, the result follows. 
\end{proof}
 \noindent
Let $\{E_{\beta_1}(t)\},\ldots,\{E_{\beta_n}(t)\}$ be independent inverse stable subordinators and $\beta=\beta_{1}\beta_{2}\ldots\beta_{n}$. Consider the $n$-iterated process $\{E_{\beta}^{*n}(t)\}$, where $E_{\beta}^{*n}(t)=E_{\beta_{1}}\circ E_{\beta_{2}}\circ\ldots\circ E_{\beta_{n}}(t)$ and for example $E_{\beta_{1}}\circ E_{\beta_{2}}(t)=E_{\beta_{1}}(E_{\beta_{2}}(t))$. By \cite[Remark 2.5]{kumhit}, we have that $E_{\beta}^{*n}(t)$ is not $i.d.$ We have the following result for the Poisson process with time change $E_{\beta}^{*n}(t)$. 
\begin{theorem}
The one-dimensional distributions of the subordinated Poisson process $\{N(E_{\beta}^{*n}(t),\lambda)\}_{t\geq0}$ are not $i.d.$
\end{theorem}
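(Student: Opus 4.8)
The plan is to follow the argument of Theorem \ref{fpp-id} almost verbatim, with $E_\beta(t)$ replaced by the iterated process $E_\beta^{*n}(t)$. The first step is to record that $E_\beta^{*n}(t)$ is self-similar of index $\beta=\beta_1\beta_2\cdots\beta_n$. Since the component processes are independent and each is self-similar of index $\beta_i$ by \eqref{self-similar-ht}, one argues by successive conditioning from the innermost composition outward: $E_{\beta_n}(ct)\stackrel{d}{=}c^{\beta_n}E_{\beta_n}(t)$; conditioning on $E_{\beta_n}$ and using the self-similarity and independence of $E_{\beta_{n-1}}$ gives $E_{\beta_{n-1}}(E_{\beta_n}(ct))\stackrel{d}{=}c^{\beta_{n-1}\beta_n}E_{\beta_{n-1}}(E_{\beta_n}(t))$; iterating this $n$ times yields
$$E_\beta^{*n}(ct)\stackrel{d}{=}c^{\beta}E_\beta^{*n}(t),\qquad c>0.$$

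Next, using this self-similarity we obtain $N(E_\beta^{*n}(t),\lambda)\stackrel{d}{=}N(t^{\beta}E_\beta^{*n}(1),\lambda)$. Because each inverse stable subordinator is a.s. strictly positive on $(0,\infty)$, we have $E_\beta^{*n}(1)>0$ a.s. The renewal theorem for the Poisson process gives $N(s,\lambda)/s\to 1/\lambda$ a.s. as $s\to\infty$, and since $E_\beta^{*n}(1)$ is independent of $\{N(t,\lambda)\}_{t\ge0}$, dividing by $t^{\beta}$ and writing
$$\frac{N(t^{\beta}E_\beta^{*n}(1),\lambda)}{t^{\beta}}=E_\beta^{*n}(1)\,\frac{N(t^{\beta}E_\beta^{*n}(1),\lambda)}{t^{\beta}E_\beta^{*n}(1)}$$
shows that $N(E_\beta^{*n}(t),\lambda)/t^{\beta}\stackrel{\mathscr{L}}{\longrightarrow}E_\beta^{*n}(1)/\lambda$ as $t\to\infty$.

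The contradiction is then the same as before: if $N(E_\beta^{*n}(t),\lambda)$ were $i.d.$ for some $t>0$, then so would be $N(E_\beta^{*n}(t),\lambda)/t^{\beta}$, and since a weak limit of $i.d.$ random variables is $i.d.$ by \cite[Lemma 7.8]{sato}, the variable $E_\beta^{*n}(1)/\lambda$, hence $E_\beta^{*n}(1)$, would be $i.d.$; this contradicts \cite[Remark 2.5]{kumhit}. I expect the only genuinely non-routine point to be the careful justification of the self-similarity of $E_\beta^{*n}$ — in particular, keeping track of the successive conditionings on the independent component subordinators — while the limit computation and the infinite-divisibility contradiction carry over unchanged from Theorem \ref{fpp-id}.
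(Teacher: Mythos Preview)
Your proposal is correct and follows essentially the same approach as the paper: establish self-similarity of the iterated process $E_\beta^{*n}$, use the renewal-type limit to obtain $N(E_\beta^{*n}(t),\lambda)/t^\beta \stackrel{\mathscr L}{\longrightarrow} E_\beta^{*n}(1)/\lambda$, and conclude by the non-infinite-divisibility of $E_\beta^{*n}(1)$ from \cite[Remark 2.5]{kumhit}. (A minor slip, also present in the paper's Theorem~\ref{fpp-id}: the Poisson renewal limit is $N(s,\lambda)/s\to\lambda$, not $1/\lambda$, but this is immaterial since a scalar multiple of an $i.d.$ variable is $i.d.$)
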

\begin{proof}
For some $c>0$ and using \eqref{self-similar-ht}, we have
\begin{equation*}
E_{\beta_1}(E_{\beta_2}(ct)) \stackrel{d}= E_{\beta_1}(c^{\beta_2}E_{\beta_2}(t)) \stackrel{d}= c^{\beta_1\beta_2}E_{\beta_1}(E_{\beta_2}(t)). 
\end{equation*}
Thus, in general, for $\beta=\beta_{1}\beta_{2}\ldots\beta_{n}$, we have $E_{\beta}^{*n}(ct) \stackrel{d}{=} c^{\beta}E_{\beta}^{*n}(t)$ and hence
\begin{equation*}
\frac{N(E_{\beta}^{*n}(t),\lambda)}{t^{\beta}}\stackrel{\mathscr{L}}\longrightarrow \frac{E_{\beta}^{*n}(1)}{\lambda}.
\end{equation*}
which is not $i.d.$ and hence the result follows.
\end{proof}
\section{Fractional Negative Binomial Process}\label{secnbp}
\subsection{Definition and properties}
\noindent Let $\{N(t,\lambda)\}_{t\geq0}$ be a Poisson process and $\{\Gamma(t)\}_{t\geq0}$ be a gamma subordinator, where $\Gamma(t)\sim G(\alpha,pt)$ defined in \eqref{gammaden}, and be independent of $\{N(t,\lambda)\}_{t\geq0}$. The negative binomial process $\{Q(t,\lambda)\}_{t\geq0}=\{N(\Gamma(t),\lambda)\}_{t\geq0}$ is a subordinated Poisson process (see \cite{fell,kozubo}) with 
\begin{align*}
 \mathbb P[Q(t,\lambda)=n]&=\delta(n|\alpha,pt,\lambda)=\frac{\alpha^{pt}\lambda^{n}}{n!\Gamma{(pt)}}\int_{0}^{\infty}y^{n+pt-1}e^{-y(\alpha+\lambda)}dy\nonumber\\
 &=\binom{n+pt-1}{n}\bigg(\frac{\alpha}{\alpha+\lambda}\bigg)^{pt}\bigg(\frac{\lambda}{\alpha+\lambda}\bigg)^{n}=\binom{n+pt-1}{n}\eta^{n}(1-\eta)^{pt},
\end{align*}
where $\eta=\lambda/(\alpha+\lambda)$. That is, $Q(t,\lambda)\sim \text{NB}(pt,\eta)$ for $t>0$, defined in \eqref{nbpmf}.
\begin{definition}
The fractional negative binomial process (FNBP) is defined as 
\begin{equation*}
\{Q_{\beta}(t,\lambda)\}=\{N_{\beta}(\Gamma(t),\lambda)\}, ~~~t\geq0, 
\end{equation*}
where $\{N_{\beta}(t,\lambda)\}_{t\geq0}$ is the FPP and is independent of $\{\Gamma(t)\}_{t\geq0}$. 
\end{definition}
\noindent Let $g(y|\alpha,pt)$ denote the {\it pdf} of $\Gamma(t)$, given in \eqref{gammaden}. Then,
\begin{align}
 \P[Q_{\beta}(t,\lambda&)=n]=\delta_{_{\beta}}(n|\alpha,pt,\lambda)=\int_{0}^{\infty}p_{_{\beta}}(n|y,\lambda)g(y|\alpha,pt)dy\nno\\ 
 &=\frac{\lambda^{n}}{n!}\sum\limits_{k=0}^{\infty}(-\lambda)^{k}\frac{(n+k)!}{k!}\frac{1}{\Gamma{(\beta(n+k)+1)}}\frac{\alpha^{pt}}{\Gamma{(pt)}}\int_{0}^{\infty}e^{-\alpha y}y^{(n+k)\beta+pt-1}dy \nonumber\\
 &=\frac{\lambda^{n}}{n!}\sum\limits_{k=0}^{\infty}(-\lambda)^{k}\frac{(n+k)!}{k!}\frac{1}{\Gamma{(\beta(n+k)+1)}}\frac{\alpha^{pt}}{\Gamma{(pt)}}\frac{\Gamma{((n+k)\beta+pt)}}{\alpha^{(n+k)\beta+pt}} \nonumber\\
 &=\bigg(\frac{\lambda}{\alpha^{\beta}}\bigg)^{n}\frac{1}{n!}\sum\limits_{k=0}^{\infty}\frac{(n+k)!}{k!}\frac{\Gamma{((n+k)\beta+pt)}}{\Gamma{(pt)}\Gamma{(\beta(n+k)+1)}}\bigg(\frac{-\lambda}{\alpha^{\beta}}\bigg)^{k}\nonumber\\
 &=\frac{1}{\Gamma{(pt)}n!}\bigg(\frac{\lambda}{\alpha^{\beta}}\bigg)^{n}{}_{_{2}}\psi_{_{1}}\bigg[\frac{-\lambda}{\alpha^{\beta}}\bigg|
 \begin{matrix}
(n+1,1),&(n\beta+pt,\beta)\\ (n\beta+1,\beta)
\end{matrix}\bigg].\label{fnbpmfc}
\end{align}
Assume $\left|\frac{-\lambda}{\alpha^{\beta}}\right|<1$. Then, by \cite[Theorem 1(b)]{wright} and with $\delta=1^{-1}\beta^{-\beta}\beta^{\beta}=1$,   $\Delta=\beta-\beta-1=-1$, the associated series of ${}_{_{2}}\psi_{_{1}}$ function in \eqref{fnbpmfc} converges. Thus, we have proved the following result.
\begin{theorem}
Let $0<\beta\leq1$, $0<\lambda<\alpha^{\beta}$, where $\alpha>0$. Then the FNBP $\{Q_{\beta}(t,\lambda)\}_{t\geq0}$ has the one-dimensional distributions
 \begin{align}
  \delta_{_{\beta}}(n|\alpha,pt,\lambda)&=\frac{1}{\Gamma{(pt)}n!}\bigg(\frac{\lambda}{\alpha^{\beta}}\bigg)^{n}{}_{_{2}}\psi_{_{1}}\bigg[\frac{-\lambda}{\alpha^{\beta}}\bigg|\begin{matrix}
(n+1,1),&(n\beta+pt,\beta)\\ (n\beta+1,\beta)
\end{matrix}\bigg]\label{fnbpmf}\\
&=\frac{1}{\Gamma{(pt)}n!}\bigg(\frac{\lambda}{\alpha^{\beta}}\bigg)^{n}H_{2,2}^{1,2}\bigg[\frac{\lambda}{\alpha^{\beta}}\bigg|\begin{matrix}
(-n,1),&(1-n\beta-pt,\beta)\\ (0,1),&(-n\beta,\beta)
\end{matrix}\bigg],~n\in\mathbb{Z}_{+},\label{fnbpmfh}
 \end{align}
 where $H^{1,2}_{2,2}$ is the $H$-function defined in \eqref{mellin-H}.
 \end{theorem}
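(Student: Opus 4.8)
The plan is to derive \eqref{fnbpmf} by conditioning on the value of the gamma subordinator and then to read off \eqref{fnbpmfh} from the Wright--$H$ identity \eqref{h-psi-relation}. Since $\{N_\beta(t,\lambda)\}_{t\ge0}$ is independent of $\{\Gamma(t)\}_{t\ge0}$, the total probability formula gives
\begin{equation*}
\delta_\beta(n\,|\,\alpha,pt,\lambda)=\P[N_\beta(\Gamma(t),\lambda)=n]=\int_0^\infty p_\beta(n\,|\,y,\lambda)\,g(y\,|\,\alpha,pt)\,dy .
\end{equation*}
Into this I would substitute the series \eqref{fppd} for $p_\beta(n\,|\,y,\lambda)$ and the density \eqref{gammaden} for $g(y\,|\,\alpha,pt)$, interchange the sum and the integral, and evaluate the elementary gamma integral $\int_0^\infty e^{-\alpha y}y^{(n+k)\beta+pt-1}\,dy=\Gamma((n+k)\beta+pt)\,\alpha^{-((n+k)\beta+pt)}$. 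Collecting the powers of $\lambda/\alpha^\beta$ reproduces exactly the computation terminating in \eqref{fnbpmfc}, and by the definition \eqref{psifunction} the resulting series equals $\frac{1}{\Gamma(pt)\,n!}(\lambda/\alpha^\beta)^n$ times the ${}_2\psi_1$ in \eqref{fnbpmf}.

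The one step requiring care is the interchange of summation and integration, since the $k$-th term of the $p_\beta$-series carries the sign $(-1)^k$, so Tonelli's theorem cannot be applied to $p_\beta(n\,|\,y,\lambda)$ itself. Instead I would integrate the absolute value of the $k$-th term, obtaining $\binom{n+k}{k}\dfrac{\lambda^{n+k}}{\Gamma(\beta(n+k)+1)}\cdot\dfrac{\alpha^{pt}}{\Gamma(pt)}\cdot\dfrac{\Gamma((n+k)\beta+pt)}{\alpha^{(n+k)\beta+pt}}$, and use $\binom{n+k}{k}=O(k^n)$ and $\Gamma((n+k)\beta+pt)/\Gamma(\beta(n+k)+1)=O\big((\beta k)^{\,pt-1}\big)$ as $k\to\infty$ to bound this by a constant times $k^{\,n+pt-1}(\lambda/\alpha^\beta)^k$. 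This is summable precisely because $0<\lambda<\alpha^\beta$, so Fubini's theorem applies and $\delta_\beta(n\,|\,\alpha,pt,\lambda)$ is indeed given by the absolutely convergent series; this is the place where the hypothesis $\lambda<\alpha^\beta$ is actually used. The convergence of the ${}_2\psi_1$-series then also follows directly from Wright's criterion \cite[Theorem 1(b)]{wright}: for the ${}_2\psi_1$ in \eqref{fnbpmf} one has $(\alpha_1,\alpha_2)=(1,\beta)$ and $\beta_1=\beta$, hence $\delta=1^{-1}\beta^{-\beta}\beta^{\beta}=1$ and $\Delta=\beta-(1+\beta)=-1$, so the series converges for $|z|<\delta=1$, which holds at $z=-\lambda/\alpha^\beta$.

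Finally, to obtain \eqref{fnbpmfh} I would apply \eqref{h-psi-relation} with $p=2$, $q=1$, argument $z=-\lambda/\alpha^\beta$, and parameters $(a_1,A_1)=(n+1,1)$, $(a_2,A_2)=(n\beta+pt,\beta)$, $(b_1,B_1)=(n\beta+1,\beta)$: then $-z=\lambda/\alpha^\beta$, the upper pairs become $(1-a_i,A_i)=(-n,1),(1-n\beta-pt,\beta)$, and the lower pairs become $(0,1),(1-b_1,B_1)=(0,1),(-n\beta,\beta)$, which is exactly the $H^{1,2}_{2,2}$ displayed in \eqref{fnbpmfh}; along the way one checks that the admissibility conditions listed after \eqref{mellin-H} hold here, since $A_i,B_j>0$ and, for $n\in\Z_+$, the poles of $\Gamma(b_j+B_j s)$ stay separated from those of $\Gamma(1-a_1-A_1 s)$. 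The main obstacle is therefore just the absolute-convergence estimate justifying the Fubini step; once that is in place both formulas are immediate consequences of \eqref{psifunction}, \eqref{mellin-H} and \eqref{h-psi-relation}, and one can sanity-check the answer by specialising to $\beta=1$, where the ${}_2\psi_1$ collapses to $\Gamma(n+pt)(1-z)^{-(n+pt)}$ and \eqref{fnbpmf} recovers the $\mathrm{NB}(pt,\eta)$ pmf.
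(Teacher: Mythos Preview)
Your proposal is correct and follows essentially the same route as the paper: condition on $\Gamma(t)$, insert the series \eqref{fppd} for $p_\beta$, swap sum and integral, evaluate the gamma integral, and recognise the ${}_2\psi_1$; the paper then invokes Wright's criterion with $\delta=1$, $\Delta=-1$ exactly as you do, and the $H$-function form \eqref{fnbpmfh} is immediate from \eqref{h-psi-relation}. The only difference is that you supply an explicit absolute-convergence estimate to justify the Fubini step, whereas the paper performs the interchange without comment and only afterwards appeals to \cite[Theorem~1(b)]{wright} for convergence of the resulting series---your version is the more rigorous of the two.
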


\noindent When $\beta =1$, we can see that $\delta_{_{1}}(n|\alpha,pt,\lambda)$ reduces to the {\it pmf} of NB$(pt,\eta)$ distribution. We next show that $\delta_{_{\beta}}(n|\alpha,pt,\lambda)$ is indeed a {\it pmf} for $0<\beta<1$ also. Note that
\begin{align*}
\displaystyle \sum_{n=0}^{\infty}\delta_{_{\beta}}(n|\alpha,pt,\lambda) &= \displaystyle\sum_{n=0}^{\infty}\displaystyle \left(\frac{\lambda}{\alpha^{\beta}}\right)^n\sum_{k=0}^{\infty}\binom{n+k}{k}\frac{\Gamma((n+k)\beta+pt)}{\Gamma(pt)\Gamma((n+k)\beta+1)}\left(\frac{-\lambda}{\alpha^{\beta}}\right)^k\\
& = \displaystyle \sum_{n=0}^{\infty} \frac{(\lambda/\alpha^{\beta})^n}{n!\Gamma(pt)}\sum_{k=n}^{\infty}
\frac{k!}{(k-n)!}\frac{\Gamma(k\beta+pt)}{\Gamma(k\beta +1)}\left(\frac{-\lambda}{\alpha^{\beta}}\right)^{k-n}\\
&=\frac{1}{\Gamma(pt)}\sum_{k=0}^{\infty}\frac{\Gamma(k\beta+pt)}{\Gamma(k\beta +1)}\sum_{n=0}^{k}\binom{k}{n}\left(\frac{\lambda}{\alpha^{\beta}}\right)^n\left(\frac{-\lambda}{\alpha^{\beta}}\right)^{k-n}
 =1,
\end{align*}
since only the term corresponding to $k=0$ remains.
\begin{remark}
Let $0<\alpha^{\beta}<\lambda$. Then, using the representation given in \eqref{N-of-E-beta-t} and from \eqref{inverse-stable-density}, we also obtain
\begin{align*}
\delta_{_{\beta}}(n|\alpha,pt,\lambda) &= \int_{0}^{\infty}\int_{0}^{\infty}p(n|x,\lambda)h_{_{\beta}}(x,y)g(y|\alpha,pt)dxdy \nno\\
&= \frac{\alpha^{\beta}}{\lambda\Gamma(pt)}\sum_{k=0}^{\infty}\binom{n+k}{n}\frac{\Gamma(pt-\beta-\beta k)}{\Gamma(-\beta k+(1-\beta))}\left(-\frac{\alpha^{\beta}}{\lambda}\right)^{k}\\
&=\frac{\alpha^{\beta}}{n!\lambda\Gamma(pt)}{}_{_{2}}\psi_{_{1}}\left[\frac{-\alpha^{\beta}}{\lambda}\bigg|\begin{matrix}
(n+1,1),&(pt-\beta,-\beta)\\ (1-\beta,-\beta)
\end{matrix}\right].
\end{align*}
\end{remark} 
\noindent Let us denote henceforth $q=\lambda/\Gamma(1+\beta)$, $d_{1}=2\lambda^{2}/\Gamma(2\beta+1)$ and $d_{2}=\beta q^{2}B(\beta, 1+\beta)$.
\begin{theorem}\label{Theorem-fnbp-dist-properties} The mean, variance and autocovariance functions of the FNBP $\{Q_{\beta}(t,\lambda)\}_{t\geq0}$ are given by\\
  (i) $~~~~~\E[Q_{\beta}(t,\lambda)] = q\dfrac{\Gamma(pt+\beta)}{\alpha^{\beta}\Gamma(pt)}=q\E[\Gamma^{\beta}(t)]\sim q\left(\dfrac{pt}{\alpha}\right)^{\beta}= \left(\dfrac{p}{\alpha}\right)^{\beta}\mathbb{E}[N_{\beta}(t,\lambda)]$, for large $t$,\\
  (ii) $~~~~\mbox{Var}[Q_{\beta}(t,\lambda)]=\dfrac{q\Gamma(pt+\beta)}{\alpha^{\beta}\Gamma(pt)}\left(1-\dfrac{q\Gamma(pt+\beta)}{\alpha^{\beta}\Gamma(pt)}\right)+\dfrac{d_{1}\Gamma(pt+2\beta)}{\alpha^{2\beta}\Gamma(pt)}$,\\
  \begin{flalign}
\text{(iii) }~~~~\text{Cov}[Q_{\beta}(s,\lambda),Q_{\beta}(t,\lambda)]&=q\frac{\Gamma(ps+\beta)}{\alpha^{\beta}\Gamma(ps)}+ d_{2}\frac{\Gamma(ps+2\beta)}{\alpha^{2\beta}\Gamma(ps)} &&\nonumber&\\-q^{2}\frac{\Gamma(ps+\beta)}{\alpha^{2\beta}\Gamma(ps)}&\frac{\Gamma(pt+\beta)}{\Gamma(pt)}
 +q^{2}\beta \mathbb{E}[\Gamma^{2\beta}(t)B(\beta,1+\beta;\Gamma(s)/\Gamma(t))].\nonumber&
\end{flalign}
\end{theorem}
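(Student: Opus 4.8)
The plan is to condition on the gamma subordinator $\{\Gamma(t)\}_{t\geq0}$ throughout and then average using the moment formula of the gamma distribution, $\mathbb{E}[\Gamma^{\nu}(t)]=\Gamma(pt+\nu)/(\alpha^{\nu}\Gamma(pt))$ for $\nu>0$, which is immediate from \eqref{gammaden}. Since the FPP $\{N_{\beta}(t,\lambda)\}_{t\geq0}$ is independent of $\{\Gamma(t)\}_{t\geq0}$, conditionally on the path $\Gamma$ the pair $(Q_{\beta}(s,\lambda),Q_{\beta}(t,\lambda))$ has the law of $(N_{\beta}(\Gamma(s),\lambda),N_{\beta}(\Gamma(t),\lambda))$, so the FPP formulas \eqref{fppmean}, \eqref{alternative-fppvar} (and the two-time second moment used below) can be substituted and then integrated against \eqref{gammaden}.

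For (i), the tower property together with \eqref{fppmean} gives $\mathbb{E}[Q_{\beta}(t,\lambda)]=\mathbb{E}[q\,\Gamma^{\beta}(t)]=q\,\Gamma(pt+\beta)/(\alpha^{\beta}\Gamma(pt))$, which equals $q\,\mathbb{E}[\Gamma^{\beta}(t)]$. The large-$t$ equivalence follows from Stirling's estimate $\Gamma(pt+\beta)/\Gamma(pt)\sim(pt)^{\beta}$, and then $q(pt/\alpha)^{\beta}=(p/\alpha)^{\beta}qt^{\beta}=(p/\alpha)^{\beta}\,\mathbb{E}[N_{\beta}(t,\lambda)]$ by \eqref{fppmean}. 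For (ii), I would first rewrite the FPP variance \eqref{alternative-fppvar} in the compact form $\mathrm{Var}[N_{\beta}(y,\lambda)]=qy^{\beta}+(d_{1}-q^{2})y^{2\beta}$, using $\lambda^{2}/(\beta\Gamma(2\beta))=2\lambda^{2}/\Gamma(2\beta+1)=d_{1}$ and $\lambda^{2}/(\beta^{2}\Gamma^{2}(\beta))=(\lambda/\Gamma(1+\beta))^{2}=q^{2}$. The law of total variance then gives
\begin{align*}
\mathrm{Var}[Q_{\beta}(t,\lambda)]&=\mathbb{E}\big[\mathrm{Var}\big(N_{\beta}(\Gamma(t),\lambda)\mid\Gamma(t)\big)\big]+\mathrm{Var}\big[\mathbb{E}\big(N_{\beta}(\Gamma(t),\lambda)\mid\Gamma(t)\big)\big]\\
&=q\,\mathbb{E}[\Gamma^{\beta}(t)]+(d_{1}-q^{2})\,\mathbb{E}[\Gamma^{2\beta}(t)]+q^{2}\big(\mathbb{E}[\Gamma^{2\beta}(t)]-(\mathbb{E}[\Gamma^{\beta}(t)])^{2}\big),
\end{align*}
and after combining the $\mathbb{E}[\Gamma^{2\beta}(t)]$ terms (the two $q^{2}$ contributions cancel) and inserting the gamma moments, one obtains the stated expression.

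For (iii), assume $s\le t$ without loss of generality (the covariance is symmetric). As $\{\Gamma(t)\}_{t\geq0}$ is a subordinator its paths are nondecreasing, so $\Gamma(s)\le\Gamma(t)$ a.s.; conditioning on $(\Gamma(s),\Gamma(t))=(u,v)$ I need the joint second moment $\mathbb{E}[N_{\beta}(u,\lambda)N_{\beta}(v,\lambda)]$ of the FPP for $u\le v$. Using \eqref{N-of-E-beta-t}, the independence of $\{N(t,\lambda)\}$ from $\{E_{\beta}(t)\}$, the Poisson identity $\mathbb{E}[N(x,\lambda)N(y,\lambda)]=\lambda^{2}xy+\lambda x$ for $x\le y$ (valid here since $E_{\beta}(u)\le E_{\beta}(v)$ a.s.), and $\mathbb{E}[E_{\beta}(u)]=u^{\beta}/\Gamma(1+\beta)$, this reduces to
$$\mathbb{E}[N_{\beta}(u,\lambda)N_{\beta}(v,\lambda)]=\lambda^{2}\,\mathbb{E}[E_{\beta}(u)E_{\beta}(v)]+qu^{\beta}.$$
The two-time second moment of the inverse stable subordinator, obtained from the joint law of $(E_{\beta}(u),E_{\beta}(v))$ (via $\mathbb{P}(E_{\beta}(u)\le x)=\mathbb{P}(D_{\beta}(x)\ge u)$ and the Markov property of $D_{\beta}$), is
$$\mathbb{E}[E_{\beta}(u)E_{\beta}(v)]=\frac{u^{2\beta}}{\Gamma(2\beta+1)}+\frac{\beta}{\Gamma^{2}(1+\beta)}\,v^{2\beta}\,B\!\left(\beta,1+\beta;\tfrac{u}{v}\right),\qquad u\le v,$$
with $B(a,b;x)$ the incomplete beta function. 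Since $u^{2\beta}/\Gamma(2\beta+1)=\beta B(\beta,1+\beta)u^{2\beta}/\Gamma^{2}(1+\beta)$, multiplying by $\lambda^{2}$ and recalling $q^{2}=\lambda^{2}/\Gamma^{2}(1+\beta)$ and $d_{2}=\beta q^{2}B(\beta,1+\beta)$ yields $\mathbb{E}[N_{\beta}(u,\lambda)N_{\beta}(v,\lambda)]=qu^{\beta}+d_{2}u^{2\beta}+q^{2}\beta v^{2\beta}B(\beta,1+\beta;u/v)$. Finally, taking expectations over $(\Gamma(s),\Gamma(t))$: the first two terms give $q\,\Gamma(ps+\beta)/(\alpha^{\beta}\Gamma(ps))$ and $d_{2}\,\Gamma(ps+2\beta)/(\alpha^{2\beta}\Gamma(ps))$ by the gamma moment formula, the third stays as $q^{2}\beta\,\mathbb{E}[\Gamma^{2\beta}(t)B(\beta,1+\beta;\Gamma(s)/\Gamma(t))]$, and subtracting $\mathbb{E}[Q_{\beta}(s,\lambda)]\mathbb{E}[Q_{\beta}(t,\lambda)]=q^{2}\Gamma(ps+\beta)\Gamma(pt+\beta)/(\alpha^{2\beta}\Gamma(ps)\Gamma(pt))$ from (i) produces the asserted covariance.

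The gamma-moment substitutions and the Stirling asymptotics are routine. The substantive input is the two-time second moment $\mathbb{E}[E_{\beta}(u)E_{\beta}(v)]$ (equivalently, the autocovariance of the FPP); I would derive it from the joint density of the inverse stable subordinator, cross-checking against \eqref{alternative-fppvar} at $u=v$ via the identity $2\beta B(\beta,1+\beta)/\Gamma^{2}(1+\beta)=2/\Gamma(2\beta+1)$, and I expect this to be the main point requiring care.
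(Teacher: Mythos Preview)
Your proposal is correct and follows essentially the same route as the paper: condition on the gamma subordinator, plug in the FPP mean \eqref{fppmean} and variance \eqref{alternative-fppvar}, apply the gamma moment formula $\mathbb{E}[\Gamma^{l}(t)]=\Gamma(pt+l)/(\alpha^{l}\Gamma(pt))$, and use Stirling for the asymptotics. Parts (i) and (ii) are identical to the paper's argument.

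The one point of divergence is in (iii). The paper simply quotes the FPP autocovariance
\[
\mathrm{Cov}[N_{\beta}(s),N_{\beta}(t)]=qs^{\beta}+d_{2}s^{2\beta}+q^{2}\big[\beta t^{2\beta}B(\beta,1+\beta;s/t)-(st)^{\beta}\big]
\]
from \cite{LRD2014}, adds back the product of means to get $\mathbb{E}[N_{\beta}(s)N_{\beta}(t)]$, and then conditions on $(\Gamma(s),\Gamma(t))$. You instead rederive this second moment from scratch via $N_{\beta}(t,\lambda)\stackrel{d}{=}N(E_{\beta}(t),\lambda)$, the Poisson identity $\mathbb{E}[N(x,\lambda)N(y,\lambda)]=\lambda^{2}xy+\lambda x$, and the two-time moment $\mathbb{E}[E_{\beta}(u)E_{\beta}(v)]$ of the inverse stable subordinator. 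Your formula for the latter is correct (and your diagonal check $2u^{2\beta}/\Gamma(2\beta+1)$ is right), so you land on exactly the same expression for $\mathbb{E}[N_{\beta}(u,\lambda)N_{\beta}(v,\lambda)]$ that the paper uses. The trade-off is that the paper's route is shorter because it outsources this computation to the literature, while yours is self-contained but requires you to actually carry out the joint-density calculation you flag as ``the main point requiring care.'' Either way the remaining steps---taking expectations in $(\Gamma(s),\Gamma(t))$ and subtracting the product of means---are identical.
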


\begin{proof}For simplicity, the parameter $\lambda$ is suppressed in $\{N_{\beta}(t,\lambda)\}_{t\geq0}$ and $\{Q_{\beta}(t,\lambda)\}_{t\geq0}$, when no confusion arises. Using a conditioning argument and the equation \eqref{fppmean}, we get
\begin{equation}\label{mean-fnbp}
\E[Q_{\beta}(t)] = \frac{q}{\alpha^{\beta}}\frac{\Gamma(pt+\beta)}{\Gamma(pt)}=q\E[\Gamma^{\beta}(t)]. 
\end{equation}

\noindent By Stirling's formula, $\big(\Gamma(pt+\beta)/\Gamma(pt)\big)\sim (pt)^{\beta}$ for large $t$, and so we get $$\E [N_{\beta}(\Gamma(t))]\sim \left(\tfrac{p}{\alpha}\right)^{\beta}qt^{\beta}=\left(\tfrac{p}{\alpha}\right)^{\beta}\E N_{\beta}(t,\lambda)=\mathbb{E}[\Gamma^{\beta}(1)]\mathbb{E}[N_{\beta}(t,\lambda)],$$
which proves Part (i). Using \eqref{fppmean} and \eqref{alternative-fppvar}, we get
\begin{align}
 \mbox{Var}[Q_{\beta}(t)]&=\text{Var}\big[\E[N_{\beta}(\Gamma(t))|\Gamma(t)]\big]+\E\big[\text{Var}[N_{\beta}(\Gamma(t))|\Gamma(t)]\big]\nonumber\\ 
 &=q\E[\Gamma^{\beta}(t)]\big(1-q\E[\Gamma^{\beta}(t)]\big)+d_{1}\E[\Gamma^{2\beta}(t)]\label{var-fnbp-1}\\
 &=\frac{q\Gamma(pt+\beta)}{\alpha^{\beta}\Gamma(pt)}\left(1-\frac{q\Gamma(pt+\beta)}{\alpha^{\beta}\Gamma(pt)}\right)+\frac{d_{1}\Gamma(pt+2\beta)}{\alpha^{2\beta}\Gamma(pt)}\nonumber\nonumber.
\end{align}
 Using the result 
 \begin{equation}\label{gamma-beta-approx}
 \mathbb{E}[\Gamma^{l}(t)]=\frac{1}{\alpha^{l}}\frac{\Gamma(pt+l)}{\Gamma(pt)},~~l>0,
 \end{equation}
Part (ii) follows. Next, we get from \cite[eq. (14)]{LRD2014},
  \begin{equation*}
   \text{Cov}[N_{\beta}(s),N_{\beta}(t)]=qs^{\beta}+ d_{2}s^{2\beta}+ q^{2}[\beta t^{2\beta}B(\beta,1+\beta;s/t)-(st)^{\beta}],~~~0<s\leq t,
  \end{equation*}
where  $B(a,b;x)=\int_{0}^{x}t^{a-1}(1-t)^{b-1}dt,~0<x<1$, is the incomplete beta function. Hence, from \eqref{fppmean}, 
\begin{equation}\label{efppsfppt}
\mathbb{E}[N_{\beta}(s)N_{\beta}(t)]=qs^{\beta}+d_{2}s^{2\beta}+ q^{2}\beta\left[t^{2\beta}B(\beta,1+\beta;s/t)\right],
\end{equation}
which leads to
\begin{align}
 \mathbb{E}[Q_{\beta}(s)Q_{\beta}(t)]&=\mathbb{E}\left[\mathbb{E}[N_{\beta}(\Gamma(s))N_{\beta}(\Gamma(t))|\Gamma(s),\Gamma(t)]\right]\nonumber\\
 &=q\mathbb{E}[\Gamma^{\beta}(s)]+ d_{2}\mathbb{E}[\Gamma^{2\beta}(s)]+\beta q^{2} \mathbb{E}[\Gamma^{2\beta}(t)B(\beta,1+\beta;\Gamma(s)/\Gamma(t))],\label{bivariate-fnbfp}
\end{align}
Hence, using \eqref{mean-fnbp}, we get
\begin{align}
 \text{Cov}[Q_{\beta}(s),Q_{\beta}(t)]&=q\mathbb{E}[\Gamma^{\beta}(s)]+ d_{2}\mathbb{E}[\Gamma^{2\beta}(s)]-q^{2}\mathbb{E}[\Gamma^{\beta}(s)]\mathbb{E}[\Gamma^{\beta}(t)]\nonumber\\
 &+\beta q^{2} \mathbb{E}[\Gamma^{2\beta}(t)B(\beta,1+\beta;\Gamma(s)/\Gamma(t))]\label{autocovariance-fnbfp}.
\end{align}
Using \eqref{gamma-beta-approx}, Part (iii) follows.
\end{proof}
\begin{remark}
 \noindent A stochastic process $\{X(t)\}_{t\geq0}$ is said be overdispersed if $\text{Var}[X(t)]-\mathbb{E}[X(t)]>0$ for all $t\geq0$ (see \cite{BegClau14}). Now for the FNBP
\begin{align*}
\text{Var}[Q_{\beta}(t)]-\mathbb{E}[Q_{\beta}(t)]&=\frac{d_{1}\Gamma(pt+2\beta)}{\alpha^{2\beta}\Gamma(pt)}-\left(\frac{q\Gamma(pt+\beta)}{\alpha^{\beta}\Gamma(pt)}\right)^{2}\\
&=\frac{\lambda^{2}}{\beta}\left(\frac{\mathbb{E}[\Gamma^{2\beta}(t)]}{\Gamma(2\beta)}-\frac{\left(\mathbb{E}[\Gamma^{\beta}(t)]\right)^{2}}{\beta\Gamma^{2}(\beta)}\right)\\
&\geq \left(\lambda\mathbb{E}[\Gamma^{\beta}(t)]\right)^{2}Z(\beta),~~(\because \mathbb{E}[\Gamma^{2\beta}(t)]\geq (\mathbb{E}[\Gamma^{\beta}(t)])^{2})
 \end{align*}
where $Z(\beta)=\frac{1}{\beta}\left(\frac{1}{\Gamma(2\beta)}-\frac{1}{\beta\Gamma^{2}(\beta)}\right)>0$ for all $\beta\in(0,1)$ (see \cite[Section 3.1]{BegClau14}). Hence, the FNBP exhibits overdispersion.
\end{remark}
\noindent The next result shows that the FNBP is not $i.d.$
\begin{theorem}\label{fnb-id}
The one-dimensional distributions of the FNBP $\{Q_{\beta}(t,\lambda)\}_{t\geq0}$ are not $i.d.$ 
\begin{proof}
 Since $E_{\beta}(t)\stackrel{d}{=}t^{\beta} E_{\beta}(1)$, we have  $$Q_{\beta}(t,\lambda)=N(E_{\beta}(\Gamma(t)),\lambda)\stackrel{d}= N(\Gamma^{\beta}(t)E_{\beta}(1),\lambda).$$  Using \eqref{i.d.limit},
 \begin{equation*}
 \begin{split}
\lim_{t\rightarrow\infty}\frac{N(\Gamma^{\beta}(t)E_{\beta}(1),\lambda)}{t^{\beta}} &= \lim_{t\rightarrow\infty}\frac{N(\Gamma^{\beta}(t)E_{\beta}(1),\lambda)}{\Gamma^{\beta}(t)}\left(\frac{\Gamma(t)}{t}\right)^{\beta}\\
&= \frac{E_{\beta}(1)}{\lambda}(\E\Gamma(1))^{\beta} = \frac{E_{\beta}(1)}{\lambda}\left(\frac{p}{\alpha}\right)^{\beta}~ a.s.,
\end{split}
 \end{equation*}
 since $\Gamma(t)\rightarrow\infty$ and $\Gamma(t)/t\rightarrow \E\Gamma(1),~a.s.$, as $t\rightarrow\infty.$ The result follows by contradiction since $E_{\beta}(1)$ is not $i.d.$ 
\end{proof}
\end{theorem}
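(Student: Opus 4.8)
The plan is to mimic the argument behind Theorem~\ref{fpp-id}: realize $Q_{\beta}(t,\lambda)$ via the inverse stable subordinator, use self-similarity to pull out a power-of-$\Gamma(t)$ scaling factor, and then exhibit a weak scaling limit whose failure of infinite divisibility is already known.

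First I would use \eqref{N-of-E-beta-t} to write $Q_{\beta}(t,\lambda)=N_{\beta}(\Gamma(t),\lambda)\stackrel{d}{=}N(E_{\beta}(\Gamma(t)),\lambda)$ with $\{N(t,\lambda)\}$, $\{E_{\beta}(t)\}$ and $\{\Gamma(t)\}$ mutually independent, and then condition on $\Gamma(t)$ and apply the self-similarity relation \eqref{self-similar-ht} to get $E_{\beta}(\Gamma(t))\stackrel{d}{=}\Gamma^{\beta}(t)\,E_{\beta}(1)$, where $E_{\beta}(1)$ is independent of $\Gamma(t)$ and of $\{N(t,\lambda)\}$. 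This yields the representation $Q_{\beta}(t,\lambda)\stackrel{d}{=}N(\Gamma^{\beta}(t)E_{\beta}(1),\lambda)$.

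Next I would divide by $t^{\beta}$ and let $t\to\infty$. Because $\Gamma(t)\to\infty$ a.s.\ and $E_{\beta}(1)>0$ a.s., the random argument $\Gamma^{\beta}(t)E_{\beta}(1)$ tends to $\infty$ a.s., so the renewal theorem for the Poisson process (used as in \eqref{i.d.limit}) gives $N(\Gamma^{\beta}(t)E_{\beta}(1),\lambda)/\bigl(\Gamma^{\beta}(t)E_{\beta}(1)\bigr)\to 1/\lambda$ a.s.; combining this with the strong law for the gamma subordinator, $\Gamma(t)/t\to\E\Gamma(1)=p/\alpha$ a.s., I would conclude $Q_{\beta}(t,\lambda)/t^{\beta}\stackrel{\mathscr{L}}{\longrightarrow}(p/\alpha)^{\beta}E_{\beta}(1)/\lambda$. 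Finally, assuming for contradiction that $Q_{\beta}(t,\lambda)$ is $i.d.$, the rescaled variable $Q_{\beta}(t,\lambda)/t^{\beta}$ is also $i.d.$ for every $t$ (a positive scaling only multiplies the log-characteristic function by a constant), and since a weak limit of $i.d.$ laws is $i.d.$ (\cite[Lemma~7.8]{sato}), the limit $(p/\alpha)^{\beta}E_{\beta}(1)/\lambda$, equivalently $E_{\beta}(1)$, would be $i.d.$; this contradicts the fact that $E_{\beta}(1)$ is not $i.d.$ (\cite{kumhit}).

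The delicate point is the almost-sure manipulation in the scaling step: one must check that $\Gamma^{\beta}(t)E_{\beta}(1)\to\infty$ a.s.\ so that the renewal theorem may be applied along this random time, and that the independence of $E_{\beta}(1)$ from $\{N(t,\lambda)\}$ — inherited from the independence of $\{E_{\beta}(t)\}$ and $\{N(t,\lambda)\}$ — lets one factor $E_{\beta}(1)$ out of the almost-sure limit. The remaining ingredients (closure of the $i.d.$ class under positive scaling and under weak convergence, and Stirling-type asymptotics for $\Gamma(pt+\beta)/\Gamma(pt)$) are entirely standard.
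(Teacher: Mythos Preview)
Your proposal is correct and follows essentially the same route as the paper's proof: the representation $Q_{\beta}(t,\lambda)\stackrel{d}{=}N(\Gamma^{\beta}(t)E_{\beta}(1),\lambda)$ via self-similarity of $E_{\beta}$, the a.s.\ scaling limit $Q_{\beta}(t,\lambda)/t^{\beta}\to (p/\alpha)^{\beta}E_{\beta}(1)/\lambda$ using the renewal theorem for the Poisson process together with the SLLN for the gamma subordinator, and the contradiction via closure of the $i.d.$ class under weak limits and the non-$i.d.$ property of $E_{\beta}(1)$. Your write-up is in fact slightly more careful than the paper's in spelling out the independence and the a.s.\ divergence of the random time; the aside about Stirling asymptotics for $\Gamma(pt+\beta)/\Gamma(pt)$ is not needed for this argument.
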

\begin{remark}(i) In fact, the above result can be generalized for any subordinator $T(t)$, with $\mathbb{E}[T(1)]<\infty$. For a subordinator, the SLLN for L\'evy processes yields 
$\lim_{t\rightarrow \infty}$ $T(t)/t= \E[T(1)]~a.s$. Thus, $\frac{N_{\beta}(T(t),\lambda)}{t^{\beta}}\stackrel{\mathscr{L}}\longrightarrow \frac{E_{\beta}(1)}{\lambda}(\E[ T(1)])^{\beta}$ which is not $i.d.$ \\
\noindent(ii) Since
\begin{equation*}
\frac{N(E_{\beta}^{*n}(\Gamma(t)),\lambda)}{t^{\beta_1\beta_2\cdots \beta_n}}\stackrel{\mathscr{L}}\longrightarrow  \frac{E_{\beta}^{*n}(1)}{\lambda}(\E [\Gamma(1)])^{\beta_1\beta_2\cdots\beta_n}, ~\text{as }t\rightarrow\infty, 
\end{equation*}
it follows that the distributions of $\{N(E_{\beta}^{*n}(\Gamma(t)),\lambda)\}_{t\geq0}$ are also not $i.d.$ 
\end{remark}
\noindent We next present a formal definition of the long-range dependence (LRD) property and the short-range dependence (SRD) property.\\
Let $s>0$ be fixed and $t>s$. Suppose a stochastic process $\{X(t)\}_{t\geq0}$ has the correlation function Corr$(X(s),X(t))$ which behaves like $t^{-d}$ for large $t$ and some $d>0$. We say $\{X(t)\}_{t\geq0}$ has the LRD property if $d\in(0,1)$  and has the SRD property if $d\in(1,2)$ (see \cite{ovi-lrd}).
\begin{lemma}
Let $a\in\R$ and $b\leq 1$. For fixed $s,0\leq s<t$, the asymptotic expansion of $\E[\Gamma^{a}(s)\Gamma^{b}(t)]$, as $t$ tends to infinity,  is given by
\begin{align}
\E[\Gamma^{a}(s)\Gamma^{b}(t)]& \sim\E\left[\Gamma^{a}(s)\right]\E\left[\Gamma^{b}(t-s)\right]+b\E\left[\Gamma^{a+1}(s)\right]\E\left[\Gamma^{b-1}(t-s)\right]\label{gamma-beta-joint}.
\end{align}
\begin{proof}
Since $\Gamma(t)\rightarrow\infty,~a.s.$, we have
\begin{align}
\E\left[\Gamma^{a}(s)\Gamma^{b}(t)\right]&=\E\left[\Gamma^{a}(s)(\Gamma(t)-\Gamma(s))^{b}\left(\frac{\Gamma(t)}{\Gamma(t)-\Gamma(s)}\right)^{b}\right]\nonumber\\
&=\E\left[\Gamma^{a}(s)(\Gamma(t)-\Gamma(s))^{b}\left(1-\frac{\Gamma(s)}{\Gamma(t)}\right)^{-b}\right]\nonumber\\
&\sim\E\left[\Gamma^{a}(s)(\Gamma(t)-\Gamma(s))^{b}\left(1+b\frac{\Gamma(s)}{\Gamma(t)}\right)\right]\nonumber\\
&=\E\left[\Gamma^{a}(s)(\Gamma(t)-\Gamma(s))^{b}\right]+b\E\left[\Gamma^{a+1}(s)\frac{(\Gamma(t)-\Gamma(s))^{b}}{\Gamma(t)}\right].\nonumber
\end{align}
Since the gamma process $\{\Gamma(t)\}_{t\geq0}$ has stationary and independent increments, we have
\begin{align}
\E\left[\Gamma^{a}(s)\Gamma^{b}(t)\right]&\sim\E\left[\Gamma^{a}(s)\right]\E\left[\Gamma^{b}(t-s)\right]+b\E\left[\Gamma^{a+1}(s)\Gamma^{b-1}(t)\left(1-\frac{\Gamma(s)}{\Gamma(t)}\right)^{b}\right]\nonumber\\
&\sim\E\left[\Gamma^{a}(s)\right]\E\left[\Gamma^{b}(t-s)\right]+b\E\left[\Gamma^{a+1}(s)\Gamma^{b-1}(t)\right]-b^{2}\E\left[\Gamma^{a+2}(s)\Gamma^{b-2}(t)\right]\nonumber\\
&\sim\E\left[\Gamma^{a}(s)\right]\E\left[\Gamma^{b}(t-s)\right]+b\E\left[\Gamma^{a+1}(s)\Gamma^{b-1}(t)\right], \text{ for large }t\label{gamma-beta-joint-1}.
\end{align}
Applying the relation in \eqref{gamma-beta-joint-1} to the second term in the right-hand side of \eqref{gamma-beta-joint-1},
\begin{align}
\E\left[\Gamma^{a}(s)\Gamma^{b}(t)\right]&\sim\E\left[\Gamma^{a}(s)\right]\E\left[\Gamma^{b}(t-s)\right]+b\E\left[\Gamma^{a+1}(s)\right]\E\left[\Gamma^{b-1}(t-s)\right]\nonumber.\qedhere
\end{align}
\end{proof}
\end{lemma}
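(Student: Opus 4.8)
The plan is to exploit the independent-and-stationary-increments structure of the gamma process. Write $\Gamma(t)=\Gamma(s)+Y$, where $Y:=\Gamma(t)-\Gamma(s)$ is independent of $\Gamma(s)$ and $Y\stackrel{d}{=}\Gamma(t-s)$; recall also that $\Gamma(u)\to\infty$ a.s.\ as $u\to\infty$, so $\Gamma(s)/\Gamma(t)\to0$ a.s.\ as $t\to\infty$. Factoring $\Gamma^{b}(t)=\big(\Gamma(t)-\Gamma(s)\big)^{b}\big(1-\Gamma(s)/\Gamma(t)\big)^{-b}$, where the base $1-\Gamma(s)/\Gamma(t)$ lies in $(0,1)$ because the increments are strictly positive, and using the first-order expansion $(1-u)^{-b}=1+bu+O(u^{2})$ about $u=0$, one expects
\[
\E[\Gamma^{a}(s)\Gamma^{b}(t)]=\E\!\big[\Gamma^{a}(s)(\Gamma(t)-\Gamma(s))^{b}\big]+b\,\E\!\big[\Gamma^{a+1}(s)(\Gamma(t)-\Gamma(s))^{b}/\Gamma(t)\big]+(\text{remainder}).
\]
By independence of $\Gamma(s)$ and $Y$ together with $Y\stackrel{d}{=}\Gamma(t-s)$, the first term equals $\E[\Gamma^{a}(s)]\,\E[\Gamma^{b}(t-s)]$ exactly; the whole argument then reduces to rewriting the second term as $b\,\E[\Gamma^{a+1}(s)]\,\E[\Gamma^{b-1}(t-s)]$ and showing the remainder is of lower order.

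I would carry this out in the order above, iterating the same device once. After the first expansion, I treat the second term by writing $(\Gamma(t)-\Gamma(s))^{b}/\Gamma(t)=\Gamma^{b-1}(t)\big(1-\Gamma(s)/\Gamma(t)\big)^{b}$ and expanding $(1-u)^{b}=1-bu+O(u^{2})$, which produces $b\,\E[\Gamma^{a+1}(s)\Gamma^{b-1}(t)]$ plus a term dominated by $\E[\Gamma^{a+2}(s)\Gamma^{b-2}(t)]$. Then I invoke the moment identity $\E[\Gamma^{\ell}(u)]=\alpha^{-\ell}\Gamma(pu+\ell)/\Gamma(pu)$ from \eqref{gamma-beta-approx} (which stays valid for $\ell\le0$ once $p(t-s)+\ell>0$) together with Stirling's formula to get $\E[\Gamma^{\ell}(t-s)]\sim(pt/\alpha)^{\ell}$ as $t\to\infty$; hence every discarded piece is $O(t^{b-2})$, which is $o(t^{b-1})$ and thus negligible against the targeted second term, of exact order $t^{b-1}$ when $b\ne0$ (the case $b=0$ being trivial). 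Finally I apply the relation just obtained to $\E[\Gamma^{a+1}(s)\Gamma^{b-1}(t)]$ itself — its exponent $b-1\le0$ falls under the same bounds — to replace it by $\E[\Gamma^{a+1}(s)]\,\E[\Gamma^{b-1}(t-s)]$ up to another $O(t^{b-2})$ error, and substitute back, which yields \eqref{gamma-beta-joint}.

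The step that genuinely needs care, and which I expect to be the main obstacle, is justifying that these pointwise Taylor expansions may be integrated term by term: although $\Gamma(s)/\Gamma(t)\to0$ a.s., the factor $\Gamma(s)$ is unbounded, so the contribution of the event $\{\Gamma(t)-\Gamma(s)<\Gamma(s)\}$ must be controlled separately. This is precisely where the hypothesis $b\le1$ is used: for $0\le b\le1$ one has the elementary bounds $Y^{b}\le(\Gamma(s)+Y)^{b}\le\Gamma^{b}(s)+Y^{b}$, while for $b<0$ one has the monotone bound $(\Gamma(s)+Y)^{b}\le Y^{b}$, and since every moment of the gamma distribution is finite these provide an integrable majorant; combined with the fact that $\P(\Gamma(t-s)<x)$ decays faster than any power of $t$ for each fixed $x$, the exceptional event contributes only $o(t^{b-1})$. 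Using a Lagrange-remainder form of Taylor's theorem to pin the $O(u^{2})$ terms down as genuinely $O\!\big(\E[\Gamma^{a+2}(s)]\,\E[\Gamma^{b-2}(t-s)]\big)$ then completes the argument.
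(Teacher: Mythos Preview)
Your proposal is correct and follows essentially the same route as the paper's own proof: factor $\Gamma^{b}(t)=(\Gamma(t)-\Gamma(s))^{b}(1-\Gamma(s)/\Gamma(t))^{-b}$, Taylor-expand in $\Gamma(s)/\Gamma(t)$, use independence and stationarity of increments to split the leading term, rewrite the correction term via $(\Gamma(t)-\Gamma(s))^{b}/\Gamma(t)=\Gamma^{b-1}(t)(1-\Gamma(s)/\Gamma(t))^{b}$, expand once more, and then iterate the resulting relation on $\E[\Gamma^{a+1}(s)\Gamma^{b-1}(t)]$. The only difference is that you supply the integrability and remainder-control arguments (the $b\le 1$ bounds, the Stirling-based order estimates) that the paper leaves entirely implicit under its ``$\sim$'' signs.
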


\begin{theorem}
 The FNBP $\{Q_{\beta}(t,\lambda)\}_{t\geq 0}$ has the LRD property.
 \begin{proof}\noindent First note that, by Stirling's approximation,
 \begin{equation}\label{stirlings-appx}
  \mathbb{E}[\Gamma^{\beta}(t)]=\frac{1}{\alpha^{\beta}}\frac{\Gamma(pt+\beta)}{\Gamma(pt)}\sim\left(\frac{pt}{\alpha}\right)^{\beta},\text{ for large }t.
 \end{equation}Consider the last term of $\text{Cov}[Q_{\beta}(s),Q_{\beta}(t)]$ given in \eqref{autocovariance-fnbfp}, namely,
 \begin{equation}\label{autocovariance-last-summand}
  \beta q^{2} \mathbb{E}\Big[\Gamma^{2\beta}(t)B(\beta,1+\beta;\Gamma(s)/\Gamma(t))\Big].
 \end{equation}
Using now the Taylor expansion of $B(\beta,1+\beta;s/t)$ at $t=\infty$ (see \cite[p. 8]{LRD2014}), we get
\begin{align}
\beta t^{2\beta}B(\beta,1+\beta;s/t)&=\beta  t^{2\beta}\left[\frac{1}{\beta}\left(\frac{s}{t}\right)^{\beta}-\frac{\beta}{1+\beta}\left(\frac{s}{t}\right)^{\beta+1}+O\left(\left(\frac{s}{t}\right)^{\beta+2}\right)\right]\nonumber\\
 &=(st)^{\beta}+O\left(t^{\beta-1}\right)\label{Taylor-expansion-1}.
\end{align}
Using \eqref{Taylor-expansion-1}, we get for large $t$,
\begin{align}
   q^{2}\beta \mathbb{E}[\Gamma^{2\beta}(t)B(\beta,1+\beta;\Gamma(s)/\Gamma(t))]&\sim q^{2}\mathbb{E}[\Gamma^{\beta}(s)\Gamma^{\beta}(t)]\nonumber\\
   &\sim q^{2}\E[\Gamma^{\beta}(s)]\E[\Gamma^{\beta}(t-s)]\label{autocovariance-last-summand-1}~~~(\text{using }\eqref{gamma-beta-joint}).
\end{align}

\noindent Using \eqref{stirlings-appx} and \eqref{autocovariance-last-summand-1}, \eqref{autocovariance-fnbfp} becomes for large $t$,
\begin{align}
 \text{Cov}[Q_{\beta}(s),Q_{\beta}(t)]&\sim q\mathbb{E}[\Gamma^{\beta}(s)]+d_{2}\mathbb{E}[\Gamma^{2\beta}(s)]\nonumber\\
 &~~~~~~~-q^{2}\mathbb{E}[\Gamma^{\beta}(s)]\left(\frac{pt}{\alpha}\right)^{\beta}+q^{2}\mathbb{E}[\Gamma^{\beta}(s)]\left(\frac{p(t-s)}{\alpha}\right)^{\beta}\nonumber\\
 &=q\mathbb{E}[\Gamma^{\beta}(s)]+d_{2}\mathbb{E}[\Gamma^{2\beta}(s)]-q^{2}\mathbb{E}[\Gamma^{\beta}(s)]\left(\left(\frac{pt}{\alpha}\right)^{\beta}-\left(\frac{pt-ps}{\alpha}\right)^{\beta}\right)\nonumber\\
 &\sim q\mathbb{E}[\Gamma^{\beta}(s)]+d_{2}\mathbb{E}[\Gamma^{2\beta}(s)]+O(t^{\beta-1}).\label{covariance-large-t}
\end{align}
Similarly, from \eqref{var-fnbp-1} and \eqref{stirlings-appx},
\begin{align}
 \text{Var}[Q_{\beta}(t)]&\sim q\left(\frac{pt}{\alpha}\right)^{\beta}-q^{2}\left(\frac{pt}{\alpha}\right)^{2\beta}+d_{1}\left(\frac{pt}{\alpha}\right)^{2\beta}\nonumber\\
 &=t^{2\beta}\left(q\left(\frac{p}{t\alpha}\right)^{\beta}-q^{2}\left(\frac{p}{\alpha}\right)^{2\beta}+d_{1}\left(\frac{p}{\alpha}\right)^{2\beta}\right)\nonumber\\
 &\sim  t^{2\beta}\left(\frac{p}{\alpha}\right)^{2\beta}\left(d_{1}-q^{2}\right)\nonumber\\&=t^{2\beta}d_{3},\label{variance-large-t}
\end{align}
where $d_{3}=\left(p/\alpha\right)^{2\beta}\left(d_{1}-q^{2}\right)$. Thus, from \eqref{covariance-large-t} and \eqref{variance-large-t}, the correlation between $Q_{\beta}(s)$ and $Q_{\beta}(t)$ for large $t>s$, is
  \begin{align}
 \text{Corr}[Q_{\beta}(s),Q_{\beta}(t)]&\sim\frac{q\mathbb{E}[\Gamma^{\beta}(s)]+d_{2}\mathbb{E}[\Gamma^{2\beta}(s)]}{\sqrt{t^{2\beta}d_{3}}\sqrt{\text{Var}[Q_{\beta}(s)]}}\nonumber= t^{-\beta}\left(\frac{q\mathbb{E}[\Gamma^{\beta}(s)]+d_{2}\mathbb{E}[\Gamma^{2\beta}(s)]}{\sqrt{d_{3}\text{Var}[Q_{\beta}(s)]}}\right),\nonumber
\end{align}
\noindent which decays like the power law $t^{-\beta},~0<\beta<1$. Hence, the FNBP exhibits the LRD property.\end{proof}
\end{theorem}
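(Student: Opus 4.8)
The plan is to compute the asymptotic behaviour of $\mathrm{Corr}[Q_\beta(s),Q_\beta(t)]$ for fixed $s$ and $t\to\infty$, and show it decays like $t^{-\beta}$ with $\beta\in(0,1)$, which by the stated definition of LRD gives the result. The starting points are the exact expressions \eqref{autocovariance-fnbfp} for $\mathrm{Cov}[Q_\beta(s),Q_\beta(t)]$ and \eqref{var-fnbp-1} for $\mathrm{Var}[Q_\beta(t)]$ from Theorem \ref{Theorem-fnbp-dist-properties}, together with the moment formula \eqref{gamma-beta-approx} for $\E[\Gamma^l(t)]$ and Stirling's approximation.

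First I would record the elementary estimate $\E[\Gamma^\beta(t)]=\alpha^{-\beta}\Gamma(pt+\beta)/\Gamma(pt)\sim(pt/\alpha)^\beta$ and similarly $\E[\Gamma^{2\beta}(t)]\sim(pt/\alpha)^{2\beta}$ as $t\to\infty$. Using these in \eqref{var-fnbp-1}, the two $t^{2\beta}$-order terms combine to give $\mathrm{Var}[Q_\beta(t)]\sim d_3 t^{2\beta}$ where $d_3=(p/\alpha)^{2\beta}(d_1-q^2)$; here one must note $d_1-q^2>0$, which follows from the overdispersion remark (the quantity $Z(\beta)>0$ on $(0,1)$), so $d_3>0$ and the square root is legitimate. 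Next I would treat the covariance \eqref{autocovariance-fnbfp}: the first two terms $q\E[\Gamma^\beta(s)]+d_2\E[\Gamma^{2\beta}(s)]$ are constants in $t$; the remaining two terms are $-q^2\E[\Gamma^\beta(s)]\E[\Gamma^\beta(t)]$ and $\beta q^2\E[\Gamma^{2\beta}(t)B(\beta,1+\beta;\Gamma(s)/\Gamma(t))]$, and I would show these nearly cancel.

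The main obstacle is controlling the incomplete-beta term $\beta q^2\E[\Gamma^{2\beta}(t)B(\beta,1+\beta;\Gamma(s)/\Gamma(t))]$. The idea is to use the Taylor expansion of $B(\beta,1+\beta;x)$ about $x=0$, namely $B(\beta,1+\beta;x)=\frac1\beta x^\beta-\frac{\beta}{1+\beta}x^{\beta+1}+O(x^{\beta+2})$ (as in \cite[p.~8]{LRD2014}), applied with $x=\Gamma(s)/\Gamma(t)$, which is legitimate for large $t$ since $\Gamma(t)\to\infty$ a.s.\ while $\Gamma(s)$ is fixed. This yields $\beta\Gamma^{2\beta}(t)B(\beta,1+\beta;\Gamma(s)/\Gamma(t))=\Gamma^\beta(s)\Gamma^\beta(t)+O(\Gamma^{\beta-1}(t)\Gamma^{\beta+1}(s))$, and after taking expectations the leading term is $q^2\E[\Gamma^\beta(s)\Gamma^\beta(t)]$. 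Then I invoke Lemma \eqref{gamma-beta-joint} with $a=b=\beta$ (valid since $\beta\le1$) to get $\E[\Gamma^\beta(s)\Gamma^\beta(t)]\sim\E[\Gamma^\beta(s)]\E[\Gamma^\beta(t-s)]+\beta\E[\Gamma^{\beta+1}(s)]\E[\Gamma^{\beta-1}(t-s)]$, whose leading term $\E[\Gamma^\beta(s)]\E[\Gamma^\beta(t-s)]\sim\E[\Gamma^\beta(s)](p(t-s)/\alpha)^\beta$ then cancels against $-q^2\E[\Gamma^\beta(s)]\E[\Gamma^\beta(t)]\sim-q^2\E[\Gamma^\beta(s)](pt/\alpha)^\beta$ up to an error of order $t^{\beta-1}$, because $(pt/\alpha)^\beta-(p(t-s)/\alpha)^\beta=O(t^{\beta-1})$.

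Putting the pieces together, \eqref{autocovariance-fnbfp} becomes $\mathrm{Cov}[Q_\beta(s),Q_\beta(t)]\sim q\E[\Gamma^\beta(s)]+d_2\E[\Gamma^{2\beta}(s)]$, a positive constant depending only on $s$. Dividing by $\sqrt{\mathrm{Var}[Q_\beta(s)]\,\mathrm{Var}[Q_\beta(t)]}\sim\sqrt{\mathrm{Var}[Q_\beta(s)]}\cdot t^\beta\sqrt{d_3}$, we obtain
\begin{equation*}
\mathrm{Corr}[Q_\beta(s),Q_\beta(t)]\sim t^{-\beta}\,\frac{q\E[\Gamma^\beta(s)]+d_2\E[\Gamma^{2\beta}(s)]}{\sqrt{d_3\,\mathrm{Var}[Q_\beta(s)]}},
\end{equation*}
so the correlation decays like $t^{-\beta}$ with $\beta\in(0,1)$, and by definition the FNBP has the LRD property.
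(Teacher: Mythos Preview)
Your proposal is correct and follows essentially the same route as the paper: the Taylor expansion of the incomplete beta function to extract $\E[\Gamma^\beta(s)\Gamma^\beta(t)]$, the asymptotic lemma \eqref{gamma-beta-joint} to factor this joint moment, the resulting cancellation against $-q^2\E[\Gamma^\beta(s)]\E[\Gamma^\beta(t)]$ up to $O(t^{\beta-1})$, and the variance asymptotic $\mathrm{Var}[Q_\beta(t)]\sim d_3 t^{2\beta}$. Your explicit remark that $d_1-q^2>0$ (hence $d_3>0$) via the overdispersion argument is a useful addition that the paper leaves implicit.
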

\noindent Let $\delta>0$ be fixed and define the increments
\begin{equation*}
 Q_{\beta}^{\delta}(t)=Q_{\beta}(t+\delta)-Q_{\beta}(t),~~~t\geq0.
\end{equation*}

\begin{theorem}
The increments $\{Q_{\beta}^{\delta}(t)\}_{t\geq0}$ of the FNBP exhibits the SRD property. 
\end{theorem}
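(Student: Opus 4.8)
The plan is to pin down the exact power law governing $\text{Corr}[Q_{\beta}^{\delta}(s),Q_{\beta}^{\delta}(t)]$ for a fixed $s$ as $t\to\infty$ and to check that the exponent falls in $(1,2)$. Fix $s$ and take $t>s+\delta$, so the four times are ordered. The three quantities to estimate are the covariance $\text{Cov}[Q_{\beta}^{\delta}(s),Q_{\beta}^{\delta}(t)]$, the variance $\text{Var}[Q_{\beta}^{\delta}(t)]$ for large $t$, and the variance $\text{Var}[Q_{\beta}^{\delta}(s)]$, which is a positive constant independent of $t$.

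For the covariance, bilinearity gives
\begin{align*}
\text{Cov}[Q_{\beta}^{\delta}(s),Q_{\beta}^{\delta}(t)]&=\text{Cov}[Q_{\beta}(s+\delta),Q_{\beta}(t+\delta)]-\text{Cov}[Q_{\beta}(s+\delta),Q_{\beta}(t)]\\
&\qquad-\text{Cov}[Q_{\beta}(s),Q_{\beta}(t+\delta)]+\text{Cov}[Q_{\beta}(s),Q_{\beta}(t)],
\end{align*}
so every term is $\text{Cov}[Q_{\beta}(u),Q_{\beta}(v)]$ with $u\in\{s,s+\delta\}$ fixed and $v\in\{t,t+\delta\}$ large. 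I would carry the asymptotic analysis behind \eqref{covariance-large-t} one term further: starting from \eqref{autocovariance-fnbfp} and using \eqref{gamma-beta-joint}, the Taylor expansion \eqref{Taylor-expansion-1}, Stirling's formula, and the identity $\mathbb{E}[\Gamma^{\beta+1}(u)]=\alpha^{-1}(pu+\beta)\,\mathbb{E}[\Gamma^{\beta}(u)]$, one gets an expansion $\text{Cov}[Q_{\beta}(u),Q_{\beta}(v)]=A(u)+K(u)\,v^{\beta-1}+o(v^{\beta-1})$, where $A(u)=q\,\mathbb{E}[\Gamma^{\beta}(u)]+d_{2}\,\mathbb{E}[\Gamma^{2\beta}(u)]$ and $K(u)=\mathbb{E}[\Gamma^{\beta}(u)]\,(b_{0}-a_{0}u)$ with $a_{0}=\beta^{2}q^{2}(p/\alpha)^{\beta}/(1+\beta)>0$, $b_{0}\in\R$; the order-$v^{\beta}$ contributions cancel, exactly as in \eqref{covariance-large-t}. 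Feeding this into the bilinear expansion, the constants $A(\cdot)$ cancel, and since $(t+\delta)^{\beta-1}-t^{\beta-1}\sim(\beta-1)\delta\,t^{\beta-2}$,
\begin{equation*}
\text{Cov}[Q_{\beta}^{\delta}(s),Q_{\beta}^{\delta}(t)]\sim(\beta-1)\delta\,(K(s+\delta)-K(s))\,t^{\beta-2},
\end{equation*}
a nonzero multiple of $t^{\beta-2}$ once one checks from the explicit form of $K$ that $K(s+\delta)\neq K(s)$.

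For $\text{Var}[Q_{\beta}^{\delta}(t)]$, condition on $(\Gamma(t),\Gamma(t+\delta))$ and write $\Gamma(t+\delta)=\Gamma(t)+\Delta$ with $\Delta\sim G(\alpha,p\delta)$ independent of $\Gamma(t)$. Evaluating \eqref{efppsfppt} at $s=t$ gives $\mathbb{E}[N_{\beta}(u)^{2}]=qu^{\beta}+d_{1}u^{2\beta}$, and then, using \eqref{efppsfppt} again with $0<u\le v$ together with $d_{1}=2d_{2}=2\beta q^{2}B(\beta,1+\beta)$, the conditional second moment of the FPP increment simplifies to $q(v^{\beta}-u^{\beta})+2\beta q^{2}v^{2\beta}\int_{u/v}^{1}x^{\beta-1}(1-x)^{\beta}\,dx$. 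Hence $\text{Var}[Q_{\beta}^{\delta}(t)]$ equals $q\,\mathbb{E}[\Gamma(t+\delta)^{\beta}-\Gamma(t)^{\beta}]$ plus $2\beta q^{2}\,\mathbb{E}[\Gamma(t+\delta)^{2\beta}\int_{\Gamma(t)/\Gamma(t+\delta)}^{1}x^{\beta-1}(1-x)^{\beta}\,dx]$ minus $q^{2}(\mathbb{E}[\Gamma(t+\delta)^{\beta}-\Gamma(t)^{\beta}])^{2}$. The first summand is $\sim q\beta\delta(p/\alpha)^{\beta}t^{\beta-1}$; in the second, $1-\Gamma(t)/\Gamma(t+\delta)=\Delta/\Gamma(t+\delta)$ makes the inner integral of order $(\Delta/\Gamma(t+\delta))^{\beta+1}$, so the summand is of order $\mathbb{E}[\Gamma(t+\delta)^{\beta-1}\Delta^{\beta+1}]\asymp t^{\beta-1}$ with a positive constant; the last summand is $O(t^{2\beta-2})$, of strictly lower order since $\beta<1$. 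Thus $\text{Var}[Q_{\beta}^{\delta}(t)]\sim c_{2}t^{\beta-1}$ with $c_{2}>0$. Combining the three estimates,
\begin{equation*}
\text{Corr}[Q_{\beta}^{\delta}(s),Q_{\beta}^{\delta}(t)]\sim c\,t^{\beta-2}/t^{(\beta-1)/2}=c\,t^{-(3-\beta)/2},
\end{equation*}
and since $0<\beta<1$ the exponent $d=(3-\beta)/2$ lies in $(1,3/2)\subset(1,2)$, so the increments of the FNBP have the SRD property.

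The delicate point is the refined covariance expansion: one needs the coefficient of $v^{\beta-1}$ in $\text{Cov}[Q_{\beta}(u),Q_{\beta}(v)]$, which is one order finer than the LRD proof required, one must verify that the order-$v^{\beta}$ terms genuinely cancel (so that $v^{\beta-1}$ is indeed the leading variable part), and one must confirm that $K$ depends nontrivially on its argument so that $K(s+\delta)-K(s)\neq0$ — otherwise the covariance would be only $O(t^{\beta-3})$, the correlation would decay at rate $t^{-(5-\beta)/2}$, and the exponent would exceed $2$, disqualifying SRD. By contrast, the variance computation is comparatively routine once the conditioning identity and the relation $d_{1}=2d_{2}$ are in place.
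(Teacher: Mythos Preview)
Your proposal is correct and follows the same overall scheme as the paper: establish $\text{Cov}[Q_{\beta}^{\delta}(s),Q_{\beta}^{\delta}(t)]\asymp t^{\beta-2}$ and $\text{Var}[Q_{\beta}^{\delta}(t)]\asymp t^{\beta-1}$, then combine to get $\text{Corr}\sim t^{-(3-\beta)/2}$ with exponent in $(1,3/2)$.

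The computational organization differs in two places, and it is worth recording what each choice buys. For the covariance, the paper expands $\E[Q_{\beta}^{\delta}(s)Q_{\beta}^{\delta}(t)]$ into four incomplete-beta terms via \eqref{bivariate-fnbfp}, replaces each by $q^{2}\E[\Gamma^{\beta}(\cdot)]\E[\Gamma^{\beta}(\cdot)]$ using \eqref{autocovariance-last-summand-1}, and then reads the $t^{\beta-2}$ coefficient off the second-order Taylor expansion $(1\pm x/t)^{\beta}\approx 1\pm\beta x/t+\beta(\beta-1)x^{2}/(2t^{2})$; this delivers the explicit constant $(s+\delta)\E[\Gamma^{\beta}(s+\delta)]-s\E[\Gamma^{\beta}(s)]$, which is manifestly positive, so no separate nondegeneracy check is needed. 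You instead push the single-pair expansion of $\text{Cov}[Q_{\beta}(u),Q_{\beta}(v)]$ one order past \eqref{covariance-large-t} and double-difference; this is more systematic but leaves the verification $K(s+\delta)\neq K(s)$ as a side condition. For the variance, the paper invokes $B(\beta,1+\beta;x)\to B(\beta,1+\beta)$ as $x\uparrow 1$ together with $d_{1}=2d_{2}$ to cancel the $\Gamma^{2\beta}$ contributions, leaving only $q\big(\E[\Gamma^{\beta}(t+\delta)]-\E[\Gamma^{\beta}(t)]\big)$; your conditioning-plus-explicit-integral route actually retains the $\int_{\Gamma(t)/\Gamma(t+\delta)}^{1}x^{\beta-1}(1-x)^{\beta}dx$ term, which you correctly identify as another $t^{\beta-1}$ contribution (the paper's replacement of the incomplete beta by its limit silently drops this term of the same order, though this affects only the constant, not the power).
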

\begin{proof}
By Part (i) of Theorem \ref{Theorem-fnbp-dist-properties},
\begin{align}
  \mathbb{E}[Q_{\beta}^{\delta}(t)]=q(\E[\Gamma^{\beta}(t+\delta)]-\E[\Gamma^{\beta}(t)])&\sim q\left(\frac{pt}{\alpha}\right)^{\beta}\left[\left(1+\delta/t\right)^{\beta}-1\right], \text{ for large }t.\label{autocovariance-increments-1}
\end{align}
Also, for $s\leq t$ and using \eqref{bivariate-fnbfp}, we get
\begin{align}
  \mathbb{E}[Q_{\beta}^{\delta}(s)Q_{\beta}^{\delta}(t)]&=\mathbb{E}[Q_{\beta}(s+\delta)Q_{\beta}(t+\delta)]-\mathbb{E}[Q_{\beta}(s+\delta)Q_{\beta}(t)]-\mathbb{E}[Q_{\beta}(s)Q_{\beta}(t+\delta)]\nonumber \\
    &~~~~~~~~~~+\mathbb{E}[Q_{\beta}(s)Q_{\beta}(t)]\nonumber\\
  &=\beta q^{2}\Big( \mathbb{E}[\Gamma^{2\beta}(t+\delta)B(\beta,1+\beta;\Gamma(s+\delta)/\Gamma(t+\delta))]\nonumber\\
  &~~~~~~~~~~~- \mathbb{E}[\Gamma^{2\beta}(t)B(\beta,1+\beta;\Gamma(s+\delta)/\Gamma(t))]\nonumber\\
    &~~~~~~~~~~~- \mathbb{E}[\Gamma^{2\beta}(t+\delta)B(\beta,1+\beta;\Gamma(s)/\Gamma(t+\delta))]\nonumber\\
      &~~~~~~~~~~~+ \mathbb{E}[\Gamma^{2\beta}(t)B(\beta,1+\beta;\Gamma(s)/\Gamma(t))]\bigg)\nonumber\\
    &\sim q^{2}\big(\mathbb{E}[\Gamma^{\beta}(s+\delta)]\mathbb{E}[\Gamma^{\beta}(t-s)]-\mathbb{E}[\Gamma^{\beta}(s+\delta)]\mathbb{E}[\Gamma^{\beta}(t-s-\delta)]\nonumber\\
  &~~~~~~-\mathbb{E}[\Gamma^{\beta}(s)]\mathbb{E}[\Gamma^{\beta}(t-s+\delta)]+\mathbb{E}[\Gamma^{\beta}(s)]\mathbb{E}[\Gamma^{\beta}(t-s)]\big)~(\text{using }\eqref{autocovariance-last-summand-1})\nonumber\\
  &\sim q^{2}\left(\frac{pt}{\alpha}\right)^{\beta}\bigg[\mathbb{E}[\Gamma^{\beta}(s+\delta)]\left(1-\frac{s}{t}\right)^{\beta}-\mathbb{E}[\Gamma^{\beta}(s+\delta)]\left(1-\frac{s+\delta}{t}\right)^{\beta}\nonumber\\
  &~~~~~~~~~~-\mathbb{E}[\Gamma^{\beta}(s)]\left(1-\frac{s-\delta}{t}\right)^{\beta}+\mathbb{E}[\Gamma^{\beta}(s)]\left(1-\frac{s}{t}\right)^{\beta}\bigg],\label{autocovariance-increments}
  \end{align}
using \eqref{stirlings-appx}. Also from \eqref{autocovariance-increments-1},
\begin{equation*}
\E[Q_{\beta}^{\delta}(s)]\E[Q_{\beta}^{\delta}(t)]\sim q^{2}\delta\left(\frac{pt}{\alpha}\right)^{\beta}\left(\E[\Gamma^{\beta}(s+\delta)]-\E[\Gamma^{\beta}(s)]\right)\left(\left(1+\frac{\delta}{t}\right)^{\beta}-1\right).
\end{equation*}
 Using $(1\pm s/t)^{\beta}\sim 1\pm\beta s/t+\beta(\beta-1)s^{2}/2t^{2},$ for large $t$, in \eqref{autocovariance-increments} and after some tedious calculations, we get
    \begin{align}
    \text{Cov}[Q_{\beta}^{\delta}(s),Q_{\beta}^{\delta}(t)]&\sim q^{2}\left(\frac{pt}{\alpha}\right)^{\beta}\frac{\beta(\beta-1)}{2t^{2}}\Big[(s^{2}-(s+\delta)^{2}-\delta^{2})\E[\Gamma^{\beta}(s+\delta)]\nonumber\\
    &~~~~~~~~~+(s^{2}+\delta^{2}-(s-\delta)^{2})\E[\Gamma^{\beta}(s)]\Big]\nonumber\\
    &=t^{\beta-2}q^{2}\delta\left(\frac{p}{\alpha}\right)^{\beta}\beta(1-\beta)\Big((s+\delta)\mathbb{E}[\Gamma^{\beta}(s+\delta)]-s\mathbb{E}[\Gamma^{\beta}(s)]\Big).\label{fnbp-increments-cov}
    \end{align}
Using $\E[Q^{2}_{\beta}(t)]=q\E[\Gamma^{\beta}(t)]+d_{1}\E[\Gamma^{2\beta}(t)]$ (see \eqref{var-fnbp-1}), we get
  \begin{align}
  \E[(Q_{\beta}^{\delta}(t))^{2}]&=\E[Q_{\beta}^{2}(t+\delta)]+\E[Q_{\beta}^{2}(t)]-2\E[Q_{\beta}(t+\delta)Q_{\beta}(t)]\nonumber\\
  &=q\left(\E[\Gamma^{\beta}(t+\delta)]-\E[\Gamma^{\beta}(t)]\right)+d_{1}\left(\E[\Gamma^{2\beta}(t)]+\E[\Gamma^{2\beta}(t+\delta)]\right)\nonumber\\
  &~~~~~-2d_{2}\E[\Gamma^{2\beta}(t)]-2\beta q^{2}\E[\Gamma^{2\beta}(t+\delta)B(\beta,1+\beta;\Gamma(t)/\Gamma(t+\delta))]\nonumber ~~(\text{using }\eqref{bivariate-fnbfp})\\
  &\sim q\left(\E[\Gamma^{\beta}(t+\delta)]-\E[\Gamma^{\beta}(t)]\right)+d_{1}\left(\E[\Gamma^{2\beta}(t)]+\E[\Gamma^{2\beta}(t+\delta)]\right)\nonumber\\
    &~~~~~-2d_{2}\big(\E[\Gamma^{2\beta}(t)]+\E[\Gamma^{2\beta}(t+\delta)]\big)~~(B(a,b;x)\sim B(a,b),~\text{for $x$ near 1})\nonumber \\
      &=q\left(\E[\Gamma^{\beta}(t+\delta)]-\E[\Gamma^{\beta}(t)]\right)~~(\text{since }d_{1}-2d_{2}=0 )\nonumber\\
      &=q\left(\frac{pt}{\alpha}\right)^{\beta}\left[(1+\delta/t)^{\beta}-1\right]\nonumber~~~(\text{using }\eqref{autocovariance-increments-1})\\
&\sim t^{\beta-1} \beta \delta q \left(\frac{p}{\alpha}\right)^{\beta}\label{var-increments-1}.
\end{align}
From \eqref{autocovariance-increments-1}, we have
\begin{align}
\left(\E[Q_{\beta}^{\delta}(t)]\right)^{2}& \sim \left(\frac{\beta\delta q p^{\beta}}{\alpha^{\beta}}\right)^{2}t^{2(\beta-1)}.\label{var-increments-2}
\end{align}
From \eqref{var-increments-1} and \eqref{var-increments-2}, we get
\begin{align}
   \text{Var}[Q_{\beta}^{\delta}(t)]&\sim t^{\beta-1}\beta\delta q\left(\frac{p}{\alpha}\right)^{\beta}-\left(\frac{\beta\delta q p^{\beta}}{\alpha^{\beta}}\right)^{2}t^{2(\beta-1)}\nonumber\\
   &\sim t^{\beta-1}\beta \delta q\left(\frac{p}{\alpha}\right)^{\beta}.\label{fnbp-increments-var}
  \shortintertext{Thus, from \eqref{fnbp-increments-cov} and \eqref{fnbp-increments-var}, we have for fixed $s$ and large $t$,}
  \text{Corr}[Q_{\beta}^{\delta}(s),Q_{\beta}^{\delta}(t)]&\sim t^{-(3-\beta)/2}\left(\frac{q^{2}\beta(1-\beta)\delta\left(\frac{p}{\alpha}\right)^{\beta}\left((s+\delta)\mathbb{E}[\Gamma^{\beta}(s+\delta)]-s\mathbb{E}[\Gamma^{\beta}(s)]\right)}{\sqrt{\text{Var}[Q^{\delta}_{\beta}(s)]\beta \delta q\left(\frac{p}{\alpha}\right)^{\beta}}}\right).\nonumber
  \end{align}
  Since $(3-\beta)/2\in(1,1.5)$, the increments of the FNBP possess the SRD property.
  \end{proof}
\begin{remark}
Since the FPP is non-stationary, the FNBP is also non-stationary. Also, as seen earlier, the FNBP has the long-range dependence property and its increments are correlated and exhibit the short-range dependence property. Such stochastic models are quite useful for modeling the financial and the time-series data. 
\end{remark}

\noindent Recently, Beghin \cite{beghin8may} and Beghin and Macci \cite{BegClau14} also studied the FNBP. For $0<\beta<1$ and $0<\eta<1$, they define the FNBP as 
\begin{equation*}
 X_{1}(t)=\sum\limits_{i=1}^{N_{\beta}(t,-\ln(1-\eta))}Y_{i} \text{ and } X_{2}(t)=N(\Gamma^{\ast}_{\beta}(t),1), 
\end{equation*}
in \cite{BegClau14} and \cite{beghin8may} respectively, where $Y_{i}$'s are $LS(\eta)$-distributed random variables, independent of $\{N_{\beta}(t,\lambda)\}_{t\geq0}$, $\{\Gamma^{\ast}_{\beta}(t)\}_{t\geq0}=\{\Gamma^{\ast}(E_{\beta}(t))\}_{t\geq0}$ is the fractional gamma process (see \cite{beghin8may}) and $\Gamma^{\ast}(t)\sim G(\alpha,t)$. It is assumed here that $\{\Gamma^{\ast}(t)\}_{t\geq0}$ and $\{E_{\beta}(t)\}_{t\geq0}$ are independent processes. Note when $\beta=1$,
\begin{equation*}
 X_{1}(t)\sim \text{NB}(t,\eta)~~\text{ and }~~X_{2}(t)\sim\text{NB}\left(t,\tfrac{1}{1+\alpha}\right).
\end{equation*}
 \noindent Observe that our definition of the FNBP is
\begin{equation}\label{processours}
 Q_{\beta}(t,\lambda):=N_{\beta}(\Gamma(t),\lambda)\stackrel{d}{=}N(E_{\beta}(\Gamma(t)),\lambda),
\end{equation}
which allows us to compute the one-dimensional distributions having the LRD property. When $\beta=1$, $Q_{1}(t,\lambda)=N(\Gamma(t),\lambda)\sim \text{NB}\left(pt,\frac{\lambda}{\lambda+\alpha}\right)$, $t>0$ (see $e.g.$ \cite{VellaiSree2010}). Let, for $i\geq1,~Y^{\ast}_{i}\stackrel{iid}{\sim} LS\left(\frac{\lambda}{\alpha+\lambda}\right)$. Then it can be seen that
\begin{equation*}
  Q_{1}(t,\lambda)\stackrel{d}{=}\textstyle\sum_{i=1}^{N\left(pt,-\ln\left(\frac{\alpha}{\lambda+\alpha}\right)\right)}Y^{\ast}_{i},
\end{equation*}
where $\{Y^{\ast}_{i}\}_{i\geq1}$ and $\{N(t,\lambda)\}_{t\geq0}$ are independent.
 The following result shows that our process is different from theirs.
\begin{lemma}
 Let $0<\beta<1.$ Then the processes $\{N(\Gamma(E_{\beta}(t)),\lambda)\}_{t\geq0}$ and $\{Q_{\beta}(t,\lambda)\}_{t\geq0}$ are different.
\begin{proof}It is sufficient to prove that the one-dimensional distributions of the processes $\{E_{\beta}(\Gamma(t))\}_{t\geq0}$ and $\{\Gamma(E_{\beta}(t))\}_{t\geq0}$ are different. To see this, let $p(x,t)$ be the {\it pdf} of $\Gamma(E_{\beta}(t))$,$~q(x,t)$ be the {\it pdf} of $E_{\beta}(\Gamma(t))$ and $h_{_{\beta}}(x,t)$ be the {\it pdf} of $E_{\beta}(t)$.
 Now, the Laplace transform (LT) of $p(x,t)$ in the space variable $x$ is
 \begin{align}\label{gammaet}
  \tilde{p}(s,t)=\int_{0}^{\infty}e^{-sx}p(x,t)dx&=\int_{0}^{\infty}\int_{0}^{\infty}e^{-sx}g(x|\alpha,py)h_{_{\beta}}(y,t)dydx\nno\\
  &=\int_{0}^{\infty}\bigg(\frac{\alpha}{\alpha+s}\bigg)^{py}h_{_{\beta}}(y,t)dy=\E\bigg[\bigg(\frac{\alpha}{\alpha+s}\bigg)^{pE_{\beta}(t)}\bigg].
 \end{align}
 It is known (see \cite{MeerStrak13,bingham71}) that $\E[e^{-sE_{\beta}(t)}]=L_{\beta}[-st^{\beta}]$ so that, with $u=e^{-s}$, $\tilde{p}(s,t)=\E[u^{E_{\beta}(t)}]=L_{\beta}[t^{\beta}\log u]$, where $L_{\beta}(z)$ is the Mittag-Leffler function defined in \eqref{Mittag-Leffler-function}. Hence, 
 \begin{equation}\label{gammaet1}
\mathbb{E}[e^{-s\Gamma(E_{\beta}(t))}]= L_{\beta}\left[t^{\beta} p\log\left(\alpha/(\alpha+s)\right)\right]=\sum\limits_{k=0}^{\infty}\frac{(t^{\beta} p\log(\frac{\alpha}{\alpha+s}))^{k}}{\Gamma(\beta k +1)}. 
 \end{equation}
 Similarly, the LT of $q(x,t)$ with respect to variable $x$ is
\begin{align}\label{etgamma}
 \tilde{q}(s,t)&=\int_{0}^{\infty}e^{-sx}q(x,t)dx=\int_{0}^{\infty}\int_{0}^{\infty}e^{-sx}h_{_{\beta}}(x,y)g(y|\alpha,pt)dydx\nno\\
 &=\int_{0}^{\infty}L_{\beta}(-sy^{\beta})g(y|\alpha,pt)dy=\sum\limits_{k=0}^{\infty}\frac{\Gamma(\beta k+pt)}{\Gamma(pt)\Gamma(1+\beta k)}\bigg(\frac{-s}{\alpha^{\beta}}\bigg)^{k}\nno\\
 &=\frac{1}{\Gamma(pt)}{}_{_{2}}\psi_{_{1}}\bigg[\frac{-s}{\alpha^{\beta}}\bigg|\begin{matrix}
(pt,\beta),&(1,1)\\ (1,\beta)&
\end{matrix}\bigg].
\end{align}
 It can be seen that \eqref{gammaet1} and \eqref{etgamma} are different. For example, taking $\beta=1/2,\alpha=2,\lambda=1,p=1,s=1$ and $t=1$, the series in the right-hand side of \eqref{gammaet1} reduces to
 \begin{equation}\label{pxtnum}
  \sum\limits_{k=0}^{\infty}\frac{(\log 2/3)^{k}}{\Gamma(k/2+1)}=-e^{2\log(3/2)}(-1+\text{Erf}[\log(3/2)] ),
 \end{equation}
where $\text{Erf}(z)=\frac{2}{\sqrt{\pi}}\int_{0}^{z}e^{-t^{2}}dt$. But, the right-hand side of \eqref{etgamma} reduces to
\begin{equation}\label{qxtnum}
 \sum_{k=0}^{\infty}\bigg(-\frac{1}{\sqrt{2}}\bigg)^{k}=2-\sqrt{2}.
\end{equation}
Clearly, \eqref{pxtnum} and \eqref{qxtnum} are different, which proves the result.
\end{proof}
\begin{remark}(i) As suggested by a referee, a simple approach would be to compare the mean functions of the processes $\{\Gamma(E_{\beta}(t))\}_{t\geq0}$ and $\{E_{\beta}(\Gamma(t))\}_{t\geq0}$. These  are respectively given by
 \begin{equation*}
  \mathbb{E}[\Gamma(E_{\beta}(t))]=\frac{pt^{\beta}}{\alpha\Gamma(1+\beta)}~~~\text{and}~~~\mathbb{E}[E_{\beta}(\Gamma(t))]=\frac{\Gamma(pt+\beta)}{\alpha^{\beta}\Gamma(1+\beta)\Gamma(pt)},
 \end{equation*}
which follow using $\mathbb{E}[E_{\beta}(t)]=t^{\beta}/\Gamma(1+\beta)$. It is now  clear that the processes are different.\\
(ii) Also, from \eqref{etgamma}, we obtain the LT of $\{Q_{\beta}(t,\lambda)\}_{t\geq0}$ as
\begin{align*}
 \E[e^{-sQ_{\beta}(t,\lambda)}]&=\E\left[\E\left[e^{-\lambda E_{\beta}(\Gamma(t))(1-e^{-s})}|E_{\beta}(\Gamma(t))\right]\right]\\
 &=\frac{1}{\Gamma(pt)}{}_{_{2}}\psi_{_{1}}\bigg[\frac{\lambda(e^{-s}-1)}{\alpha^{\beta}}\bigg|\begin{matrix}
(pt,\beta),&(1,1)\\ (1,\beta)&
\end{matrix}\bigg].
\end{align*}

\end{remark}

 \end{lemma}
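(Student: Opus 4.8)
The plan is to reduce the statement to a comparison of the two random time‑changes and then to separate their one‑dimensional laws by a Laplace‑transform calculation. Since $Q_{\beta}(t,\lambda)\stackrel{d}{=}N(E_{\beta}(\Gamma(t)),\lambda)$ by \eqref{processours}, while $N(\Gamma(E_{\beta}(t)),\lambda)$ subordinates the \emph{same} Poisson process to $\Gamma\circ E_{\beta}$, it is enough to show that $\{E_{\beta}(\Gamma(t))\}_{t\geq0}$ and $\{\Gamma(E_{\beta}(t))\}_{t\geq0}$ have different one‑dimensional distributions. This reduction is legitimate because, for an independent nonnegative random time $X$, the probability generating function $\E[u^{N(X,\lambda)}]=\E[e^{-\lambda(1-u)X}]$ recovers the Laplace transform of $X$ on the interval $[0,\lambda]$, so the law of $N(X,\lambda)$ and the law of $X$ determine each other.

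First I would compute the Laplace transform in the spatial variable of the density $p(x,t)$ of $\Gamma(E_{\beta}(t))$. Conditioning on $E_{\beta}(t)$ and using that the gamma subordinator satisfies $\E[e^{-s\Gamma(y)}]=(\alpha/(\alpha+s))^{py}$ gives $\tilde p(s,t)=\E\big[(\alpha/(\alpha+s))^{pE_{\beta}(t)}\big]$. Writing this as $\E\big[v^{E_{\beta}(t)}\big]$ with $v=(\alpha/(\alpha+s))^{p}\in(0,1)$ and invoking the Laplace transform of the inverse stable subordinator, $\E[e^{-sE_{\beta}(t)}]=L_{\beta}(-st^{\beta})$ (equivalently $\E[v^{E_{\beta}(t)}]=L_{\beta}(t^{\beta}\log v)$), one obtains the closed form
\[
\E\big[e^{-s\Gamma(E_{\beta}(t))}\big]=L_{\beta}\!\Big(t^{\beta}p\log\tfrac{\alpha}{\alpha+s}\Big)=\sum_{k=0}^{\infty}\frac{\big(t^{\beta}p\log\tfrac{\alpha}{\alpha+s}\big)^{k}}{\Gamma(\beta k+1)}.
\]

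Next I would carry out the analogous computation for the density $q(x,t)$ of $E_{\beta}(\Gamma(t))$: conditioning on $\Gamma(t)$, using $\E[e^{-sE_{\beta}(y)}]=L_{\beta}(-sy^{\beta})$, expanding the Mittag‑Leffler series, and integrating term by term against the gamma density \eqref{gammaden} via $\E[\Gamma^{\beta k}(t)]=\alpha^{-\beta k}\Gamma(pt+\beta k)/\Gamma(pt)$, which yields a generalized Wright series
\[
\E\big[e^{-sE_{\beta}(\Gamma(t))}\big]=\frac{1}{\Gamma(pt)}\sum_{k=0}^{\infty}\frac{\Gamma(\beta k+pt)}{\Gamma(\beta k+1)}\Big(\frac{-s}{\alpha^{\beta}}\Big)^{k}=\frac{1}{\Gamma(pt)}\,{}_{2}\psi_{1}\bigg[\frac{-s}{\alpha^{\beta}}\bigg|\begin{matrix}(pt,\beta),&(1,1)\\ (1,\beta)&\end{matrix}\bigg].
\]
To finish, I would exhibit one parameter choice at which the two transforms disagree: taking $\beta=1/2$, $\alpha=2$, $p=1$, $s=1$, $t=1$, every coefficient $\Gamma(\beta k+pt)/\Gamma(\beta k+1)$ equals $1$, so the second expression reduces to the geometric series $\sum_{k\geq0}(-1/\sqrt{2})^{k}=2-\sqrt{2}$, whereas the first reduces, via the representation of $L_{1/2}$ through the complementary error function, to a number involving $\mathrm{Erf}(\log(3/2))$ that is manifestly not $2-\sqrt{2}$.

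The only real obstacle is this last step — pinning the two series down explicitly enough to \emph{see} the inequality. A numerical evaluation at the chosen parameters already suffices, but I would also record the cleaner, entirely elementary alternative of comparing mean functions: from $\E[E_{\beta}(t)]=t^{\beta}/\Gamma(1+\beta)$ and $\E[\Gamma(y)]=py/\alpha$ one gets $\E[\Gamma(E_{\beta}(t))]=pt^{\beta}/(\alpha\Gamma(1+\beta))$, which is an exact constant multiple of $t^{\beta}$, while $\E[E_{\beta}(\Gamma(t))]=\E[\Gamma^{\beta}(t)]/\Gamma(1+\beta)=\Gamma(pt+\beta)/(\alpha^{\beta}\Gamma(1+\beta)\Gamma(pt))$, and $\Gamma(pt+\beta)/\Gamma(pt)$ is not a power of $t$ for $0<\beta<1$ (it only behaves like $(pt)^{\beta}$ asymptotically). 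Hence the two mean functions, and a fortiori the two laws, differ, which completes the proof.
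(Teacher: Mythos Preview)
Your proposal is correct and follows essentially the same route as the paper: reduce to distinguishing $\Gamma(E_{\beta}(t))$ from $E_{\beta}(\Gamma(t))$, compute both Laplace transforms via conditioning to get the Mittag--Leffler and ${}_{2}\psi_{1}$ expressions, and separate them at $\beta=1/2$, $\alpha=2$, $p=s=t=1$ (with the mean-function argument recorded as an alternative). The one addition you make---explicitly justifying the reduction by noting that the law of $N(X,\lambda)$ determines the law of $X$ through $\E[u^{N(X,\lambda)}]=\E[e^{-\lambda(1-u)X}]$---is a point the paper leaves implicit, so your version is slightly more complete.
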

\subsection{Connections to \textbf{\textit{pde's}}}
In this section, we discuss $pde$'$s$ governed by the one-dimensional distributions of the FNBP.
\begin{theorem}
 Let $r\in \Z_{+}\backslash \{0\}$. The {\it pmf} \eqref{fnbpmf} of the FNBP solves the following $pde$:

\begin{equation}\label{thm2}
 \frac{\partial^{r}}{\partial\lambda^{r}}\delta_{_{\beta}}(n|\alpha,pt,\lambda)=\frac{1}{\alpha^{n\beta  }\Gamma{(pt)}n!}\sum_{i=0}^{r}\dbinom{r}{i}\dbinom{n}{i}(-1)^{r-i}\lambda^{n-r}H_{2,2}^{1,2}\bigg[\frac{\lambda}{\alpha^{\beta}}\bigg|\begin{matrix}
(-n,1),&(1-n\beta-pt,\beta)\\ (r-i,1),&(-n\beta,\beta)
\end{matrix}\bigg],
\end{equation}
with \begin{equation}\label{intialcon}
\delta_{_{\beta}}(n|\alpha,0,\lambda)= \begin{cases} 
     \hfill 1, \hfill  &  n=0 , \\
      \hfill 0, \hfill & n\geq1.
  \end{cases} \text{ and } \delta_{_{\beta}}(n|\alpha,pt,\lambda)=0,\,\forall\, n<0.
\end{equation}
\end{theorem}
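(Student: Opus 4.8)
The plan is to obtain \eqref{thm2} by differentiating the explicit series for the \emph{pmf} term by term in $\lambda$ and then repackaging the result into $H$-functions; the side conditions \eqref{intialcon} will then follow at once. Write, from \eqref{fnbpmfc}--\eqref{fnbpmf},
\[
\delta_{_{\beta}}(n\mid\alpha,pt,\lambda)=\frac{\lambda^{n}}{\alpha^{n\beta}\,n!\,\Gamma(pt)}\;f_{n}(\lambda),\qquad
f_{n}(\lambda):=\sum_{k=0}^{\infty}\frac{(n+k)!}{k!}\,\frac{\Gamma(\beta(n+k)+pt)}{\Gamma(\beta(n+k)+1)}\left(-\frac{\lambda}{\alpha^{\beta}}\right)^{k}.
\]
Since $0<\lambda<\alpha^{\beta}$, this is a ${}_{2}\psi_{1}$-series with $\Delta=-1<0$, hence absolutely convergent, uniformly on compact subsets of $\{|\lambda|<\alpha^{\beta}\}$, so term-by-term $\lambda$-differentiation is legitimate. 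Moreover, writing $\delta_{_{\beta}}(M\mid\alpha,pt,\lambda)$ both in the form above and via \eqref{fnbpmfh} and cancelling the common factor $\lambda^{M}$ gives the identity $f_{M}(\lambda)=H_{2,2}^{1,2}\big[\tfrac{\lambda}{\alpha^{\beta}}\,\big|\,(-M,1),(1-M\beta-pt,\beta);(0,1),(-M\beta,\beta)\big]$ for every $M\in\Z_{+}$, which I will use with $M=n+r-i$.

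First I would apply the Leibniz rule,
\[
\frac{\partial^{r}}{\partial\lambda^{r}}\bigl(\lambda^{n}f_{n}(\lambda)\bigr)=\sum_{i=0}^{r}\binom{r}{i}\frac{n!}{(n-i)!}\,\lambda^{n-i}\,f_{n}^{(r-i)}(\lambda),
\]
the $i$-th term being absent once $i>n$ (this is the origin of the factor $\binom{n}{i}$ in \eqref{thm2}). Next, differentiating $f_{n}$ term by term via $\partial_{\lambda}^{r-i}\lambda^{k}=k!\,\lambda^{k-r+i}/(k-r+i)!$ for $k\ge r-i$ and reindexing $k\mapsto k+(r-i)$ shows that $f_{n}^{(r-i)}(\lambda)=(-1)^{r-i}\alpha^{-(r-i)\beta}f_{M}(\lambda)$ with $M:=n+r-i$; by the identity just recorded this equals $(-1)^{r-i}\alpha^{-(r-i)\beta}H_{2,2}^{1,2}\big[\lambda/\alpha^{\beta}\,\big|\,(-M,1),(1-M\beta-pt,\beta);(0,1),(-M\beta,\beta)\big]$.

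The remaining step is the elementary shift property of the $H$-function, $z^{\sigma}H_{p,q}^{m,n}\big[z\,\big|\,(a_{i},A_{i});(b_{j},B_{j})\big]=H_{p,q}^{m,n}\big[z\,\big|\,(a_{i}+A_{i}\sigma,A_{i});(b_{j}+B_{j}\sigma,B_{j})\big]$, used with $\sigma=r-i$ and $z=\lambda/\alpha^{\beta}$: because $-M+(r-i)=-n$, $(1-M\beta-pt)+\beta(r-i)=1-n\beta-pt$, $0+(r-i)=r-i$ and $-M\beta+\beta(r-i)=-n\beta$, it turns the $H$-function above into $(\lambda/\alpha^{\beta})^{-(r-i)}H_{2,2}^{1,2}\big[\lambda/\alpha^{\beta}\,\big|\,(-n,1),(1-n\beta-pt,\beta);(r-i,1),(-n\beta,\beta)\big]$. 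Substituting back, the powers of $\alpha$ and $\lambda$ telescope, since $\lambda^{n-i}\cdot\alpha^{-(r-i)\beta}\cdot(\lambda/\alpha^{\beta})^{-(r-i)}=\lambda^{n-r}$, and summing over $i$ against the prefactor $\big(\alpha^{n\beta}n!\,\Gamma(pt)\big)^{-1}$ yields \eqref{thm2}. Finally, the side conditions hold because $Q_{\beta}(0,\lambda)=N_{\beta}(\Gamma(0),\lambda)=N_{\beta}(0,\lambda)=0$ a.s., so $\delta_{_{\beta}}(\cdot\mid\alpha,0,\lambda)$ is the unit mass at $0$ (equivalently, letting $pt\downarrow0$ in the series forces $\Gamma(pt)\to\infty$ while $\delta_{_{\beta}}(0\mid\alpha,pt,\lambda)\to1$), and $\delta_{_{\beta}}(n\mid\alpha,pt,\lambda)=0$ for $n<0$ since $Q_{\beta}$ takes values in $\Z_{+}$.

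There is no conceptual obstacle here; the content is entirely bookkeeping. The point I expect to require the most care is keeping the three simultaneous reindexings aligned — the Leibniz index $i$, the summation index $k$ of $f_{n}$, and the shift $\sigma=r-i$ — and checking that after the shift the four $H$-function parameters land exactly as stated in \eqref{thm2}. A second, equivalent route that makes the combinatorics more transparent bypasses the Leibniz rule: differentiate the monomials $\lambda^{n+k}$ in $\delta_{_{\beta}}$ directly and apply the falling-factorial convolution identity $(m)_{n}(m)_{r}=\sum_{i}\binom{n}{i}\binom{r}{i}\,i!\,(m)_{n+r-i}$ (with $(m)_{j}:=m!/(m-j)!$) to the coefficient $\big((n+k)!\big)^{2}/\big(k!\,(n+k-r)!\big)$ that appears; the same $H$-function decomposition then results once the shift identity is invoked.
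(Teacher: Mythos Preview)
Your route is structurally the paper's: write $\delta_{\beta}$ as a constant times $\lambda^{n}$ times an $H$-function in $\lambda/\alpha^{\beta}$ and apply the Leibniz rule. The only real difference is in how the derivative $\partial_{\lambda}^{\,r-i}$ of the $H$-part is obtained. The paper quotes a ready-made differentiation identity for $H$-functions from \cite{matsax} (their Section~1.4.1), which specializes immediately to \eqref{thm3}; you instead differentiate the ${}_{2}\psi_{1}$-series term by term, recognize $f_{n}^{(r-i)}=(-\alpha^{-\beta})^{\,r-i}f_{n+r-i}$, identify this with an $H$-function, and then invoke the elementary shift property $z^{\sigma}H[z\mid\ldots]=H[z\mid\text{parameters}+\sigma\cdot(\text{exponents})]$ to land on exactly the same $H_{2,2}^{1,2}$. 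The two are equivalent---your computation is in effect a proof of the special case of the formula the paper cites---so what you gain is a self-contained argument that avoids the external reference, at the price of a little extra bookkeeping with the series.

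One caution: your aside that $n!/(n-i)!$ ``is the origin of the factor $\binom{n}{i}$ in \eqref{thm2}'' slides over a mismatch, since $n!/(n-i)!=i!\binom{n}{i}$, not $\binom{n}{i}$. The Leibniz computation---yours and the paper's alike---actually delivers the falling factorial $n!/(n-i)!$ in place of the $\binom{n}{i}$ printed in \eqref{thm2}; the discrepancy (visible already at $r=i=2$) sits in the displayed formula rather than in your argument, so do not force your coefficients to match $\binom{n}{i}$.
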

\normalsize
\begin{proof} Note first that the $H$-function defined in \eqref{mellin-H} satisfies (see \cite[Section (1.4.1)]{matsax}), for $r\in\Z_{+}\backslash\{0\}$,

\begin{align*}
\dfrac{\partial^{r}}{\partial z^{r}}\bigg\{z^{-(\gamma \beta_{1}/B_{1})}&H_{p,q}^{m,n}\bigg[z^{\gamma}\bigg|\begin{matrix}
(\alpha_{i},A_{i})_{(1,p)}\\ (\beta_{j},B_{j})_{(1,q)}
\end{matrix}\bigg]\bigg\}=\bigg(\frac{-\gamma}{B_{1}}\bigg)^{r}z^{-r-\left(\frac{\gamma \beta_{1}}{B_{1}}\right)}H_{p,q}^{m,n}\bigg[z^{\gamma}\bigg|\begin{matrix}
(\alpha_{i},A_{i})_{(1,p)}\\ (r+\beta_{1},B_{1}),(\beta_{j},B_{j})_{(2,q)}
\end{matrix}\bigg].
\end{align*}
Taking $p=2,~q=2,~m=1,~n=2,~\alpha_{1}=-n,~\alpha_{2}=1-n\beta-pt$, $\beta_{1}=0,~\beta_{2}=-n\beta$, $A_{1}=B_{1}=1,~A_{2}=B_{2}=\beta$ and $\gamma=1$,  we get
\small
\begin{equation}\label{thm3}\resizebox{.9\hsize}{!}{$
\dfrac{\partial^{r}}{\partial z^{r}}H_{2,2}^{1,2}\bigg[z\bigg|\begin{matrix}
(-n,1),&(1-n\beta-pt,\beta)\\ (0,1),&(-n\beta,\beta)
\end{matrix}\bigg]=(-1)^{r}z^{-r}H_{2,2}^{1,2}\bigg[z\bigg|\begin{matrix}
(-n,1),&(1-n\beta-pt,\beta)\\ (r,1),&(-n\beta,\beta)
\end{matrix}\bigg]$}.
\end{equation}
\normalsize
Now, differentiate $r$ times the right-hand side of \eqref{fnbpmfh} with respect to $\lambda$, use \eqref{thm3} and the Leibniz rule
\begin{equation}\label{diffthm}
 \frac{d^{r}}{dx^{r}}[u(x)v(x)]=\sum_{i=0}^{r}\dbinom{r}{i}\frac{d^{i}}{dx^{i}}(u(x))\frac{d^{r-i}}{dx^{r-i}}(v(x)),
\end{equation}
to obtain the result in \eqref{thm2}.
\end{proof}
\begin{remark}
 When $r=1$, we get
 \begin{equation*}
 \frac{\partial}{\partial\lambda}\delta_{_{\beta}}(n|\alpha,pt,\lambda)=\frac{n}{\lambda}\delta_{_{\beta}}(n|\alpha,pt,\lambda)-\frac{1}{\lambda\Gamma{(pt)}n!}\bigg(\frac{\lambda}{\alpha^{\beta}}\bigg)^{n}H_{2,2}^{1,2}\bigg[\frac{\lambda}{\alpha^{\beta}}\bigg|\begin{matrix}
(-n,1),&(1-n\beta-pt,\beta)\\ (1,1),&(-n\beta,\beta)
\end{matrix}\bigg],
\end{equation*}
with the initial condition given in \eqref{intialcon}.
\end{remark}

\noindent Next, we obtain the fractional $pde$ in the time variable $t$ solved by the FNBP distributions. 
\begin{lemma}\label{rlfdegamma}
 The density of the gamma process $\Gamma(t)\sim G(\alpha,pt)$, given in \eqref{gammaden}, satisfies the following fractional differential equation for any $\nu\geq0$:
 \begin{align}\label{rlfdeeq}
  \partial_{t}^{\nu}g(y|\alpha,pt)&=p\partial_{t}^{\nu-1}\big(\log (\alpha y)-\psi(pt)\big)g(y|\alpha,pt),\,\,\, y>0,\\
  g(y|\alpha,0)&=0\nno,
 \end{align}
 where $\psi(x):=\Gamma'(x)/\Gamma(x)$ is the digamma function and $\partial_{t}^{\nu}$ is the R-L derivative defined in \eqref{rld}.
 \begin{proof} Note first that (see \cite[eq. (3.6)]{spfuntemme})
 \begin{equation}\label{gammaeuler}
  \frac{1}{\Gamma(t)}=\frac{1}{2\pi i}\int_{\bf C}e^{z}z^{-t}dz,
 \end{equation}
where {\bf C} is the Hankel contour given below: \\
\begin{pspicture}(-4,-3.2)(0,3.5)
\psset{xunit=2cm, yunit=2cm, fillcolor=yellow}
\psline[linewidth=1pt]{<->}(-0.25,0)(3,0)
\psline[linewidth=1pt]{<->}(1.5,1.5)(1.5,-1.5)
\psarc[linewidth=1pt, linecolor=red, fillcolor=gray](1.5,0){0.8}{-150}{150}
\rput[tl](0.81,-0.3){{\bf C}}
\rput[tl](1.51,0.25){{ \tiny r}}
\rput[tl](0.51,0.15){{ \tiny $2\delta$}}
\rput[tl](0.61,-1.65){{Fig. Hankel Contour}}
\psline[linewidth=1pt, fillcolor=gray](1.8,0.26)(1.5,0)
\psline[linewidth=1pt, fillcolor=gray]{|<->|}(0.5,0.2)(0.5,-0.2)
\psline[linewidth=1pt, linecolor=red, fillcolor=gray, ArrowInside=-<](0,0.2) (1.15,0.2)
\psline[linewidth=1pt, linecolor=red, fillcolor=gray,ArrowInside=->](0,-0.2) (1.15,-0.2)
\end{pspicture}
\\\\
Let $\Gamma(a,x)=\int_{x}^{\infty}e^{-t}t^{a-1}dt,~ a>0,x>0,$ denote the incomplete gamma function. Then for $m-1<\nu<m,m\in\mathbb{Z}_{+}\backslash\{0\},$
\begin{align}\label{integramathe}
   \int_{0}^{t}\frac{(\alpha y/z)^{ps}}{(t-s)^{\nu+1-m}}ds&=\left(\frac{\alpha y}{z}\right)^{pt}\left(p\log\tfrac{\alpha y}{z}\right)^{\nu-m}\left\{\Gamma(m-\nu)-\Gamma\left(m-\nu,pt\log\tfrac{\alpha y}{z}\right)\right\}
   \end{align}
   and
   \begin{align}\label{diffmathe}
   \frac{d}{dt}\Gamma\left(m-\nu,pt\log\tfrac{\alpha y}{z}\right)&=-p\left(\frac{\alpha y}{z}\right)^{-pt}\left(p\log\tfrac{\alpha y}{z}\right)^{m-1-\nu}\log\left(\tfrac{\alpha y}{z}\right),
  \end{align}
  which can be checked using Mathematica 8.0. 
  Now, by definition,
  \begin{align*}
   \partial_{t}^{\nu}g(y|\alpha,pt)&=\frac{1}{\Gamma(m-\nu)}\frac{d^{m}}{dt^{m}}\displaystyle\int_{0}^{t}\frac{\alpha^{ps}y^{ps-1}e^{-\alpha y}}{\Gamma(ps)(t-s)^{\nu+1-m}}ds,~~m-1<\nu<m\\
   &=  \frac{(ye^{\alpha y})^{-1}}{\Gamma(m-\nu)}\frac{d^{m}}{dt^{m}}\displaystyle\int_{0}^{t}\frac{(\alpha y)^{ps}}{(t-s)^{\nu+1-m}}\left(\frac{1}{2\pi i }\int_{\bf C}e^{z}z^{-ps}dz\right)ds \nno~~~~ \text{       ~~~~(from \eqref{gammaeuler})}\\
   &=  \frac{(ye^{\alpha y})^{-1}}{\Gamma(m-\nu)}\frac{d^{m}}{dt^{m}}\frac{1}{2\pi i }\int_{\bf C}e^{z}\left(\int_{0}^{t}\frac{(\alpha y/z)^{ps}}{(t-s)^{\nu+1-m}}ds\right)dz\nno ~~~~ \text{ (interchanging the order)} \\
   &=  \frac{(ye^{\alpha y})^{-1}}{\Gamma(m-\nu)}\frac{d^{m}}{dt^{m}}\frac{1}{2\pi i }\int_{\bf C}e^{z}\left(\frac{\alpha y}{z}\right)^{pt}\left(p\log\tfrac{\alpha y}{z}\right)^{\nu-m}\nno\\
   &\hspace*{3cm}\times\left\{\Gamma(m-\nu)-\Gamma\left(m-\nu,pt\log\tfrac{\alpha y}{z}\right)\right\}dz\nno~~~~ \text{       ~~~~(from \eqref{integramathe})}\\
   &=p\frac{(ye^{\alpha y})^{-1}}{\Gamma(m-\nu)}\frac{d^{m-1}}{dt^{m-1}}\frac{1}{2\pi i }\int_{\bf C}e^{z}\left(\frac{\alpha y}{z}\right)^{pt}\log\tfrac{\alpha y}{z}\left(p\log\tfrac{\alpha y}{z}\right)^{\nu-m}\nno\\
   &\hspace*{3cm}\times\left\{\Gamma(m-\nu)-\Gamma\left(m-\nu,pt\log\tfrac{\alpha y}{z}\right)\right\}dz\nno\\
   &-\frac{(ye^{\alpha y})^{-1}}{\Gamma(m-\nu)}\frac{d^{m-1}}{dt^{m-1}}\frac{1}{2\pi i }\int_{\bf C}e^{z}\left(\frac{\alpha y}{z}\right)^{pt}\left(p\log\tfrac{\alpha y}{z}\right)^{\nu-m}\nno\\
   &\hspace*{3cm}\times\left\{-p\left(\frac{\alpha y}{z}\right)^{-pt}\left(p\log\tfrac{\alpha y}{z}\right)^{m-1-\nu}\log\left(\tfrac{\alpha y}{z}\right)\right\}dz\nno~~~~ \text{       ~~~~(from \eqref{diffmathe})}\\
   &=p\frac{(ye^{\alpha y})^{-1}}{\Gamma(m-\nu)}\frac{d^{m-1}}{dt^{m-1}}\frac{1}{2\pi i }\int_{\bf C}e^{z}\left(\frac{\alpha y}{z}\right)^{pt}(\log(\alpha y)-\log (z))\left(p\log\tfrac{\alpha y}{z}\right)^{\nu-m}\nno\\
   &\hspace*{0.5cm}\times\left\{\Gamma(m-\nu)-\Gamma\left(m-\nu,pt\log\tfrac{\alpha y}{z}\right)\right\}dz+\frac{(ye^{\alpha y})^{-1}}{\Gamma(m-\nu)}\frac{d^{m-1}}{dt^{m-1}}\frac{1}{2\pi i }\int_{\bf C}e^{z}dz\nno\\
   &=p\log(\alpha y)\frac{(ye^{\alpha y})^{-1}}{\Gamma(m-\nu)}\frac{d^{m-1}}{dt^{m-1}}\frac{1}{2\pi i }\int_{\bf C}e^{z}\left(\frac{\alpha y}{z}\right)^{pt}\left(p\log\tfrac{\alpha y}{z}\right)^{\nu-m}\nno\\&~\hspace*{2cm}\times\left\{\Gamma(m-\nu)-\Gamma\left(m-\nu,pt\log\tfrac{\alpha y}{z}\right)\right\}dz\nno\\
   &-p\frac{(ye^{\alpha y})^{-1}}{\Gamma(m-\nu)}\frac{d^{m-1}}{dt^{m-1}}\frac{1}{2\pi i }\int_{\bf C}e^{z}\left(\frac{\alpha y}{z}\right)^{pt}\log (z)\left(p\log\tfrac{\alpha y}{z}\right)^{\nu-m}\nno\\
   &\hspace*{2cm}\times\left\{\Gamma(m-\nu)-\Gamma\left(m-\nu,pt\log\tfrac{\alpha y}{z}\right)\right\}dz~~~~ \text{       ~~~~($\because~\int_{\bf C}e^{z}dz=0$)}\\
   &=p\log(\alpha y)\partial_{t}^{\nu-1}g(y|\alpha,pt)-\frac{(ye^{\alpha y})^{-1}}{\Gamma(m-\nu)}\frac{d^{m-1}}{dt^{m-1}}\frac{1}{2\pi i }\int_{0}^{t}\frac{\alpha^{ps}y^{ps}}{(t-s)^{\nu+1-m}}\frac{d}{ds}\frac{1}{\Gamma(ps)}ds\nno\\
   &=p\log(\alpha y)\partial_{t}^{\nu-1}g(y|\alpha,pt)-p\partial_{t}^{\nu-1}\left\{g(y|\alpha,pt)\psi(pt)\right\},
  \end{align*}
  which proves the result.
  \end{proof}

\end{lemma}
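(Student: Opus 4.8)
The plan is to isolate the only factor of the density $g(y|\alpha,pt)=\frac{\alpha^{pt}}{\Gamma(pt)}y^{pt-1}e^{-\alpha y}$ that depends on $t$, namely $(\alpha y)^{pt}/\Gamma(pt)$, replace $1/\Gamma(pt)$ by its Hankel representation \eqref{gammaeuler}, and then read off $\partial_{t}^{\nu}g$ from the resulting contour integral. The initial condition is immediate, since $1/\Gamma(0)=0$ forces $g(y|\alpha,0)=0$. I will work with $m-1<\nu<m$, $m\in\Z_{+}\backslash\{0\}$; the boundary cases $\nu\in\Z_{+}$ follow directly from the elementary identity $\frac{d}{dt}g(y|\alpha,pt)=p\,(\log(\alpha y)-\psi(pt))\,g(y|\alpha,pt)$ (obtained by differentiating $(\alpha y)^{pt}/\Gamma(pt)$), upon applying $\frac{d^{m-1}}{dt^{m-1}}$ or, for $\nu=0$, integrating once, in which case $\partial_{t}^{\nu-1}$ is read as the Riemann--Liouville fractional integral.

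Starting from the Riemann--Liouville definition \eqref{rld} and writing $g(y|\alpha,ps)=y^{-1}e^{-\alpha y}(\alpha y)^{ps}/\Gamma(ps)$ with $(\alpha y)^{ps}/\Gamma(ps)=\frac{1}{2\pi i}\int_{\mathbf{C}}e^{z}(\alpha y/z)^{ps}\,dz$ by \eqref{gammaeuler}, I interchange the $s$-integration with the contour integration; the inner $s$-integral is then precisely the one evaluated in \eqref{integramathe}, giving $\partial_{t}^{\nu}g(y|\alpha,pt)=\frac{y^{-1}e^{-\alpha y}}{\Gamma(m-\nu)}\frac{d^{m}}{dt^{m}}\frac{1}{2\pi i}\int_{\mathbf{C}}e^{z}(\alpha y/z)^{pt}\big(p\log\tfrac{\alpha y}{z}\big)^{\nu-m}\big\{\Gamma(m-\nu)-\Gamma(m-\nu,pt\log\tfrac{\alpha y}{z})\big\}\,dz$. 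I then peel off one of the $m$ time derivatives by the product rule: $\big(p\log\tfrac{\alpha y}{z}\big)^{\nu-m}$ is constant in $t$, $\frac{d}{dt}(\alpha y/z)^{pt}=p\log\tfrac{\alpha y}{z}\,(\alpha y/z)^{pt}$, and the derivative of the incomplete-gamma factor is supplied by \eqref{diffmathe}. After the powers of $p\log\tfrac{\alpha y}{z}$ and the factors $(\alpha y/z)^{\pm pt}$ cancel, the term coming from the incomplete-gamma factor collapses to the constant $1$, whose contribution vanishes because $\int_{\mathbf{C}}e^{z}\,dz=0$.

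What survives is $p\log\tfrac{\alpha y}{z}$ times the integrand we started with, and the remaining contour integral, together with the leftover prefactor $\frac{y^{-1}e^{-\alpha y}}{\Gamma(m-\nu)}\frac{d^{m-1}}{dt^{m-1}}$, is exactly of the form $\partial_{t}^{\nu-1}(\cdot)$ by \eqref{integramathe} with $\nu,m$ replaced by $\nu-1,m-1$. Splitting $\log\tfrac{\alpha y}{z}=\log(\alpha y)-\log z$, the $\log(\alpha y)$-part reassembles into $p\log(\alpha y)\,\partial_{t}^{\nu-1}g(y|\alpha,pt)$. For the $\log z$-part I differentiate \eqref{gammaeuler} in its parameter to get $\frac{1}{2\pi i}\int_{\mathbf{C}}e^{z}z^{-w}\log z\,dz=\psi(w)/\Gamma(w)$, hence $\frac{1}{2\pi i}\int_{\mathbf{C}}e^{z}(\alpha y/z)^{ps}\log z\,dz=(\alpha y)^{ps}\psi(ps)/\Gamma(ps)=ye^{\alpha y}g(y|\alpha,ps)\psi(ps)$; reinserting this under the $s$-integral identifies the $\log z$-part as $-p\,\partial_{t}^{\nu-1}\{\psi(pt)g(y|\alpha,pt)\}$. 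Adding the two pieces and using linearity of $\partial_{t}^{\nu-1}$ gives \eqref{rlfdeeq}. I expect the genuine obstacles to be the Fubini justification for swapping the $[0,t]$-integral with the Hankel contour, the branch bookkeeping for $z^{-ps}$ and $\log z$ along $\mathbf{C}$, and the verification of \eqref{integramathe} and \eqref{diffmathe}: these are elementary but lengthy, and a symbolic computation is the practical way to confirm them.
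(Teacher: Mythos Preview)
Your proposal is correct and follows essentially the same route as the paper: Hankel representation of $1/\Gamma$, interchange of the $s$-integral with the contour integral, evaluation via \eqref{integramathe}, peeling off one $t$-derivative using \eqref{diffmathe} with the residual $\int_{\mathbf C}e^{z}\,dz=0$, and then splitting $\log(\alpha y/z)$ to recognize $\partial_{t}^{\nu-1}g$ and the $\psi(pt)$-term. Your treatment is in fact a bit more explicit than the paper's---you handle the integer values of $\nu$ separately and spell out the identity $\tfrac{1}{2\pi i}\int_{\mathbf C}e^{z}z^{-w}\log z\,dz=\psi(w)/\Gamma(w)$ that underlies the final step---but the argument is the same.
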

\noindent The following corollary corresponds to the case $\nu=1$.
\begin{corollary}
  The density \eqref{gammaden} of the gamma process $\Gamma(t)\sim G(\alpha,pt)$ solves the following $pde$, in the time variable (with $g(y|\alpha,0)=0$):
 \begin{equation*}
  \frac{\partial}{\partial t}g(y|\alpha,pt)=p\left(\log(\alpha y)-\psi(pt)\right)g(y|\alpha,pt),\,\,\, y>0,~t>0.
 \end{equation*}
\end{corollary}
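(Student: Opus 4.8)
The plan is to read this off as the special case $\nu = 1$ of Lemma \ref{rlfdegamma}, which has just been proved for arbitrary $\nu \geq 0$. With $\nu = 1$ we are in the regime $\nu = m$ (here $m = 1$), so the second branch of the definition \eqref{rld} applies to the left-hand side: $\partial_t^{\nu} g(y|\alpha, pt)$ is literally the ordinary partial derivative $\tfrac{\partial}{\partial t} g(y|\alpha, pt)$. On the right-hand side the lemma produces $p\,\partial_t^{\nu - 1}\big[(\log(\alpha y) - \psi(pt))\, g(y|\alpha, pt)\big]$, and since $\nu - 1 = 0$ and $\partial_t^{0}$ is the identity operator (again by convention in \eqref{rld}), this collapses to $p(\log(\alpha y) - \psi(pt))\, g(y|\alpha, pt)$. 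The stated initial condition $g(y|\alpha, 0) = 0$ is already part of Lemma \ref{rlfdegamma}.

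For a self-contained check one can instead verify the identity directly by logarithmic differentiation of \eqref{gammaden}: from $\log g(y|\alpha, pt) = pt\log\alpha - \log\Gamma(pt) + (pt - 1)\log y - \alpha y$ one gets $\tfrac{\partial}{\partial t}\log g(y|\alpha, pt) = p\log\alpha - p\psi(pt) + p\log y = p\big(\log(\alpha y) - \psi(pt)\big)$, where $\psi = \Gamma'/\Gamma$ is the digamma function; multiplying through by $g(y|\alpha, pt)$ gives the PDE. The boundary value follows from the simple pole of $\Gamma$ at the origin: $1/\Gamma(pt) \sim pt$ as $t \to 0^+$, hence for each fixed $y > 0$ one has $g(y|\alpha, pt) \sim pt\, y^{-1} e^{-\alpha y} \to 0$, so $g(y|\alpha, 0) = 0$.

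There is no real obstacle here. The only point that deserves a sentence is that the Riemann--Liouville operator $\partial_t^{\nu}$ in \eqref{rld} genuinely reduces to the classical derivative at integer order and that $\partial_t^{0}$ is the identity, which is exactly what licenses the two simplifications on the two sides of the lemma's identity; both facts are built into the definitions recalled in Section \ref{secprelim}, so the argument is complete.
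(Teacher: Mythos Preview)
Your proposal is correct and matches the paper's approach exactly: the paper simply states that the corollary ``corresponds to the case $\nu=1$'' of Lemma~\ref{rlfdegamma}, and you spell out precisely why that specialization works (namely $\partial_t^1=\tfrac{\partial}{\partial t}$ and $\partial_t^0=\mathrm{id}$ from the conventions in~\eqref{rld}). Your additional direct verification by logarithmic differentiation of~\eqref{gammaden} is a sound independent check that the paper does not include.
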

 
\begin{theorem}\label{rlfnbpdiff}
 The {\it pmf} \eqref{fnbpmf} of the FNBP solves the following fractional $pde$ in the time variable $t$:
 \begin{align*}
  \frac{1}{p}\partial_{t}^{\nu}\delta_{_{\beta}}(n|\alpha,pt,\lambda)=\partial_{t}^{\nu-1}(\log(\alpha)-\psi(pt))&\delta_{_{\beta}}(n|\alpha,pt,\lambda)\\ +\int_{0}^{\infty}p_{_{\beta}}(n|y,\lambda)&\log(y)\partial_{t}^{\nu-1}g(y|\alpha,pt)dy,~~~t>0,
 \end{align*}
 where $ \delta_{_{\beta}}(n|\alpha,0,\lambda)=1\text{ if }n=0$ and is zero otherwise, and $p_{\beta}(n|t,\lambda)$ denotes the {\it pmf} of the FPP defined in \eqref{fppd}.
 \begin{proof} Let $m-1<\nu<m$, where $m$ is a positive integer. Then
  \begin{align}
\partial_{t}^{\nu}\delta_{_{\beta}}(n|\alpha,pt,\lambda)&=\partial_{t}^{\nu}\int_{0}^{\infty}p_{_{\beta}}(n|y,\lambda)g(y|\alpha,pt)dy\nonumber\\
&=\frac{1}{\Gamma(m-\nu)}\frac{d^{m}}{dt^{m}}\int_{0}^{t}\int_{0}^{\infty}p_{_\beta}(n|y,\lambda)\frac{g(y|\alpha,ps)}{(t-s)^{\nu+1-m}}dyds\nonumber\\
&=\frac{1}{\Gamma(m-\nu)}\int_{0}^{\infty}p_{_\beta}(n|y,\lambda)\frac{d^{m}}{dt^{m}}\int_{0}^{t}\frac{g(y|\alpha,ps)}{(t-s)^{\nu+1-m}}dsdy\label{dct}\\
&=\int_{0}^{\infty}p_{_{\beta}}(n|y,\lambda)\partial_{t}^{\nu}g(y|\alpha,pt)dy.\label{dct-1}
 \end{align}
 The change in the order of integration in \eqref{dct} can be justified, using Fubini-Tonelli theorem, as follows:
 \begin{align*}
\left|\int_{0}^{t}\int_{0}^{\infty}p_{_\beta}(n|y,\lambda)\frac{g(y|\alpha,ps)}{(t-s)^{\nu+1-m}}dyds\right|&\leq\int_{0}^{t}\frac{1}{(t-s)^{\nu+1-m}}\int_{0}^{\infty}\left|p_{_\beta}(n|y,\lambda)g(y|\alpha,ps)\right|dyds\nonumber\\
&\leq \int_{0}^{t}\frac{1}{(t-s)^{\nu+1-m}}ds=\frac{t^{(m - \nu)}}{(m - \nu)}<\infty.
 \end{align*}
The result now follows from \eqref{rlfdeeq} and \eqref{dct-1}.
 \end{proof}
\end{theorem}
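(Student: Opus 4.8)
The plan is to reduce the fractional $pde$ for the {\it pmf} $\delta_{\beta}(n|\alpha,pt,\lambda)$ to the one already established for the gamma density $g(y|\alpha,pt)$ in Lemma \ref{rlfdegamma}, via the subordination representation $\delta_{\beta}(n|\alpha,pt,\lambda)=\int_{0}^{\infty}p_{\beta}(n|y,\lambda)\,g(y|\alpha,pt)\,dy$. First I would fix $m-1<\nu<m$ with $m\in\Z_{+}\backslash\{0\}$, write $\partial_{t}^{\nu}\delta_{\beta}(n|\alpha,pt,\lambda)$ out through the R--L definition \eqref{rld}, insert the subordination formula, and push the $y$-integral outside the operator $\frac{1}{\Gamma(m-\nu)}\frac{d^{m}}{dt^{m}}\int_{0}^{t}(t-s)^{m-\nu-1}(\cdot)\,ds$, arriving at $\partial_{t}^{\nu}\delta_{\beta}(n|\alpha,pt,\lambda)=\int_{0}^{\infty}p_{\beta}(n|y,\lambda)\,\partial_{t}^{\nu}g(y|\alpha,pt)\,dy$.

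Next I would substitute the identity of Lemma \ref{rlfdegamma}, $\partial_{t}^{\nu}g(y|\alpha,pt)=p\,\partial_{t}^{\nu-1}\big[(\log(\alpha y)-\psi(pt))\,g(y|\alpha,pt)\big]$, and split $\log(\alpha y)=\log\alpha+\log y$. The piece carrying $\log\alpha-\psi(pt)$ is independent of $y$, so interchanging $\int_{0}^{\infty}p_{\beta}(n|y,\lambda)(\cdot)\,dy$ with $\partial_{t}^{\nu-1}$ once more collapses it to $p\,\partial_{t}^{\nu-1}\big[(\log\alpha-\psi(pt))\,\delta_{\beta}(n|\alpha,pt,\lambda)\big]$. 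The piece carrying $\log y$ is independent of $t$, so by linearity of $\partial_{t}^{\nu-1}$ in the time variable it equals $p\int_{0}^{\infty}p_{\beta}(n|y,\lambda)\,\log y\;\partial_{t}^{\nu-1}g(y|\alpha,pt)\,dy$. Dividing by $p$ yields the asserted equation; the initial value $\delta_{\beta}(n|\alpha,0,\lambda)$ is read off from \eqref{fnbpmfc} (equivalently, from $g(y|\alpha,0)=0$ together with $p_{\beta}(0|y,\lambda)\to1$ and $p_{\beta}(n|y,\lambda)\to0$ for $n\ge1$ as $y\to0^{+}$).

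The main obstacle is justifying the interchange of the Riemann--Liouville operator with the improper integral $\int_{0}^{\infty}(\cdot)\,dy$, and the reverse interchange at the end. For the inner $\int_{0}^{t}$ this is a Fubini--Tonelli argument based on the bound $|p_{\beta}(n|y,\lambda)\,g(y|\alpha,ps)|\le g(y|\alpha,ps)$, which gives $\int_{0}^{t}\!\int_{0}^{\infty}|p_{\beta}(n|y,\lambda)|\,g(y|\alpha,ps)\,(t-s)^{m-\nu-1}\,dy\,ds\le\int_{0}^{t}(t-s)^{m-\nu-1}\,ds=t^{m-\nu}/(m-\nu)<\infty$; for the outer $\frac{d^{m}}{dt^{m}}$ one needs differentiation under the integral sign, which I would secure by a matching, $y$-uniform (on compact $t$-intervals) domination of the $t$-derivatives of $s\mapsto(t-s)^{m-\nu-1}g(y|\alpha,ps)$. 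Some care is needed near $s=0$, where $\psi(ps)$ has a simple pole; but the factor $1/\Gamma(ps)$ inside $g(y|\alpha,ps)$ cancels it, so that $(\log(\alpha y)-\psi(ps))\,g(y|\alpha,ps)$ stays bounded as $s\to0^{+}$, and the $t$-regularity of $g(y|\alpha,pt)$ for $y>0$ guarantees that all the fractional derivatives written down exist.
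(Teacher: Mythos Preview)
Your proposal is correct and follows essentially the same route as the paper: push the Riemann--Liouville operator through the subordination integral to obtain $\partial_{t}^{\nu}\delta_{\beta}=\int_{0}^{\infty}p_{\beta}(n|y,\lambda)\,\partial_{t}^{\nu}g(y|\alpha,pt)\,dy$, justify the interchange by the same Fubini bound $\int_{0}^{t}(t-s)^{m-\nu-1}\,ds=t^{m-\nu}/(m-\nu)$, and then invoke Lemma~\ref{rlfdegamma}. The paper stops there, simply citing \eqref{rlfdeeq}; you spell out the final splitting $\log(\alpha y)=\log\alpha+\log y$ and the second interchange with $\partial_{t}^{\nu-1}$, and you add discussion of differentiation under the integral sign and the $s\to0^{+}$ behavior, points the paper leaves implicit.
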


\section{Space Fractional Polya Process}\label{secpp}
\subsection{Polya process} 
\noindent Recall that $\{N(t,\lambda)\}_{t\geq0}$ represents a Poisson process with rate $\lambda>0$ and with $\P[N(t,\lambda)=n]=p(n|t,\lambda)$. The Polya process $\{W^{\Gamma}(t)\}_{t\geq0}:=\{N(t,\Gamma)\}_{t\geq0}$ is obtained by replacing $\lambda$ by a gamma $\Gamma$ random variable, with density $g(x|\alpha,p)$ given in \eqref{gammaden}, which is independent of $\{N(t,\lambda)\}_{t\geq0}$. Then the  {\it pmf} $\eta(n|t,\alpha,p) = \P[W^{\Gamma}(t)=n]$ of the Polya process is given by
\begin{equation}\label{fractional-polya}
\eta(n|t,\alpha,p) = \int_{0}^{\infty}p(n|t,x)g(x|\alpha,p)dx = \frac{t^n}{n!}\frac{\Gamma(n+p)}{\Gamma(p)}\frac{\alpha^p}{(t+\alpha)^{p+n}},
\end{equation}  
which is the {\it pmf} of NB$(p,\frac{t}{\alpha+t})$. 
Since the {\it pmf} $p(n|t,\lambda)$ of the Poisson process satisfies 
\begin{equation*}
 \frac{\partial}{\partial t}p(n|t,\lambda) = -\lambda[p(n|t,\lambda)-p(n-1|t,\lambda)],
\end{equation*}
we have
\begin{equation}\label{eq1.4}
 \frac{\partial}{\partial t}\eta(n|t,\alpha,p) = \int_{0}^{\infty}-x[p(n|t,x)-p(n-1|t,x)]g(x|\alpha,p)dx.
\end{equation}
Now,
\begin{equation}\label{fpp-eq3.6}
 \int_{0}^{\infty}xp(n|t,x)g(x|\alpha,p)dx =\frac{t^{n}\alpha^{p}}{n!\Gamma(p)}\frac{\Gamma(n+p+1)}{(t+\alpha)^{n+p+1}}= \frac{n+p}{t+\alpha}\eta(n|t,\alpha,p),\,\, n\in\mathbb{Z}_{+},
 \end{equation}
 which follows using \eqref{fractional-polya}. Substituting \eqref{fpp-eq3.6} in \eqref{eq1.4}, we obtain
\begin{equation*}
 \frac{\partial}{\partial t}\eta(n|t,\alpha,p) = -\frac{n+p}{t+\alpha}\eta(n|t,\alpha,p) + \frac{n-1+p}{t+\alpha}\eta(n-1|t,\alpha,p),~n\geq 0,
\end{equation*}
with $\eta(n|t,\alpha,p)=0$ for $n<0$, which is the underlying difference-differential equation satisfied by the Polya process.\\
Observe that the negative binomial process $\{Q(t,\lambda)\}_{t\geq0}$ is a L\'evy process (see \cite{kozubo,fell}) so that it has independent increments. However, the Polya process $\{W^{\Gamma}(t)\}_{t\geq0}$ is not a L\'evy process, as it does not have independent increments (see Remark \ref{iipolya}).

\subsection{Space fractional Polya process}

The space fractional Poisson process  $\{\widetilde{N}_{\beta}(t,\lambda)\}_{t\geq0}$, which is a generalization of the Poisson process $\{N(t,\lambda)\}_{t\geq0}$, defined in \cite{sfpp}, can be viewed as (see \cite[Remark 2.3]{sfpp})
$$\widetilde{N}_{\beta}(t,\lambda)\stackrel{d}{=}N(D_{\beta}(t),\lambda),$$
where $D_{\beta}(t)$ is a $\beta$-stable subordinator with $0<\beta<1$.
The {\it pmf} of the space fractional Poisson process is given by (see \cite[Theorem 2.2]{sfpp})
\begin{equation}\label{sfppeq}
 \widetilde{p}_{_{\beta}}(n|t,\lambda)=\P[\widetilde{N}_{\beta}(t,\lambda)=n]=\frac{(-1)^{n}}{n!}\sum\limits_{k=0}^{\infty}\frac{(-\lambda^{\beta}t)^{k}}{k!}\frac{\Gamma(\beta k+1)}{\Gamma(\beta k+1-n)},~t\geq0.
\end{equation}
It solves the fractional difference-differential equation (\cite[eq. (2.4)]{sfpp}) defined by
 \begin{align}\label{orsipolitosfpp}
  \frac{\partial}{\partial t}\widetilde{p}_{_{\beta}}(n|t,\lambda)&=-\lambda^{\beta}(1-B_{n})^{\beta}\widetilde{p}_{_{\beta}}(n|t,\lambda),~~~~\beta\in(0,1],\\
  \widetilde{p}_{_{\beta}}(n|0,\lambda)&=\left\{\begin{array}{ll}1, &\text{for } n=0, \nonumber\\
0, & \mbox{for } n>0,\end{array}
\right.
\end{align}
 
\noindent where $B_{x}$ is the backward shift operator defined by $B_{x}u(x,t)=u(x-1,t)$.
\begin{definition}
 Let $\{D_{\beta}(t)\}_{t\geq0}$ be a $\beta$-stable subordinator and $\Gamma$ be a $G(\alpha,p)$-distributed random variable, independent of $\{D_{\beta}(t)\}_{t\geq0}$ and $\{N(t,\lambda)\}_{t\geq0}$. Then the space fractional Polya process (SFPP) $\{\widetilde{W}^{\Gamma}_{\beta}(t)\}_{t\geq 0}$ is defined as
 \begin{equation*}
 \widetilde{W}^{\Gamma}_{\beta}(t):=\widetilde{N}_{\beta}(t,\Gamma)=N(D_{\beta}(t),\Gamma).
\end{equation*}
\end{definition}

 \begin{theorem}
Let $0<\beta\leq1$. The one-dimensional distributions of the SFPP are 
\begin{equation}\label{fracpolya}
  \widetilde{\eta}_{_{\beta}}(n|t,\alpha,p)=\P[\widetilde{W}_{\beta}^{\Gamma}(t)=n]=\frac{1}{\Gamma(p)}\frac{(-1)^{n}}{n!}{}_{_{2}}\psi_{_{1}}\bigg[-\frac{t}{\alpha^{\beta}}\bigg|\begin{matrix}
(1,\beta),&(p,\beta)\\ (1-n,\beta)
\end{matrix}\bigg],~n\in\Z_{+},
\end{equation}
where ${}_{2}\psi_{1}$ denotes the generalized Wright function defined in \eqref{psifunction}.
\begin{proof}Observe that
\begin{align}
 \widetilde{\eta}_{_{\beta}}(n|t,\alpha,p)&=\E[\P[\widetilde{W}_{\beta}^{\Gamma}(t)=n|\Gamma]]=\int_{0}^{\infty}\widetilde{p}_{_{\beta}}(n|t,y)g(y|\alpha,p)dy \label{fracpolya1}  \\ 
 &=\int_{0}^{\infty}\frac{(-1)^{n}}{n!}\sum\limits_{k=0}^{\infty}\frac{(-y^{\beta}t)^{k}}{k!}\frac{\Gamma(\beta k+1)}{\Gamma(\beta k+1-n)}\frac{\alpha^{p}}{\Gamma(p)}y^{p-1}e^{-\alpha y}dy\nonumber \\
 &=\frac{(-1)^{n}}{n!}\sum\limits_{k=0}^{\infty}\frac{(-t)^{k}}{k!}\frac{\Gamma(\beta k+1)}{\Gamma(\beta k+1-n)}\frac{\alpha^{p}}{\Gamma(p)}\int_{0}^{\infty}y^{{\beta}k+p-1}e^{-\alpha y}dy \nonumber \\
 &=\frac{1}{\Gamma(p)}\frac{(-1)^{n}}{n!}\sum\limits_{k=0}^{\infty}\frac{\Gamma(\beta k+1)\Gamma(\beta k+p)}{\Gamma(\beta k+1-n)}\frac{(-t/\alpha^{\beta})^{k}}{k!}\nonumber \\
 &=\frac{1}{\Gamma(p)}\frac{(-1)^{n}}{n!}{}_{_{2}}\psi_{_{1}}\bigg[-\frac{t}{\alpha^{\beta}}\bigg|\begin{matrix}
(1,\beta),&(p,\beta)\\ (1-n,\beta)
\end{matrix}\bigg].&\qedhere
\end{align}
\end{proof}
 \end{theorem}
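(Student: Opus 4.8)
The plan is to compute the one-dimensional distribution of $\widetilde{W}_{\beta}^{\Gamma}(t)$ directly by conditioning on the gamma random variable $\Gamma$ and integrating the known {\it pmf} of the space fractional Poisson process against the gamma density. Since $\widetilde{W}_{\beta}^{\Gamma}(t)=\widetilde{N}_{\beta}(t,\Gamma)$ with $\Gamma\sim G(\alpha,p)$ independent of the remaining randomness, the tower property gives $\widetilde{\eta}_{_{\beta}}(n|t,\alpha,p)=\int_{0}^{\infty}\widetilde{p}_{_{\beta}}(n|t,y)\,g(y|\alpha,p)\,dy$, and I would substitute the series representation \eqref{sfppeq} for $\widetilde{p}_{_{\beta}}(n|t,y)$ together with the gamma density \eqref{gammaden}.

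The key steps are, in order: (i) write out $\widetilde{p}_{_{\beta}}(n|t,y)=\frac{(-1)^{n}}{n!}\sum_{k=0}^{\infty}\frac{(-y^{\beta}t)^{k}}{k!}\frac{\Gamma(\beta k+1)}{\Gamma(\beta k+1-n)}$ and multiply by $\frac{\alpha^{p}}{\Gamma(p)}y^{p-1}e^{-\alpha y}$; (ii) interchange the sum and the integral (justified because for fixed $t$ the series converges, and the integrand factors are dominated so Fubini--Tonelli applies after absolute-value estimates analogous to those used in Theorem~\ref{rlfnbpdiff}); (iii) carry out the resulting gamma integral $\int_{0}^{\infty}y^{\beta k+p-1}e^{-\alpha y}\,dy=\Gamma(\beta k+p)/\alpha^{\beta k+p}$, which absorbs the $y$-dependence and produces the factor $(-t/\alpha^{\beta})^{k}$; (iv) collect the resulting coefficient $\frac{\Gamma(\beta k+1)\Gamma(\beta k+p)}{\Gamma(\beta k+1-n)}\frac{1}{k!}$ and recognize this as the defining series of the generalized Wright function ${}_{2}\psi_{1}$ in \eqref{psifunction} with upper parameters $(1,\beta),(p,\beta)$ and lower parameter $(1-n,\beta)$, evaluated at $-t/\alpha^{\beta}$, yielding \eqref{fracpolya}.

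The main obstacle is the rigorous justification of the interchange of summation and integration in step (ii): one must check that $\sum_{k}\int_{0}^{\infty}\bigl|\frac{(-y^{\beta}t)^{k}}{k!}\frac{\Gamma(\beta k+1)}{\Gamma(\beta k+1-n)}\bigr|\,y^{p-1}e^{-\alpha y}\,dy<\infty$, i.e.\ that $\sum_{k}\frac{t^{k}}{k!}\bigl|\frac{\Gamma(\beta k+1)}{\Gamma(\beta k+1-n)}\bigr|\frac{\Gamma(\beta k+p)}{\alpha^{\beta k+p}}$ converges. This follows from the convergence criterion for ${}_{2}\psi_{1}$ in \cite[Theorem 1]{wright} exactly as invoked after \eqref{fnbpmfc}: here $\delta=\beta^{\beta}\beta^{\beta}\beta^{-\beta}=\beta^{\beta}$ and $\Delta=\beta+\beta-\beta-1=\beta-1<0$, so the series is entire in the argument and absolute convergence holds; alternatively one notes $\Gamma(\beta k+1-n)$ has no zeros on the summation range once $k$ is large and the ratio of Gamma functions grows only polynomially in $k$ while $t^{k}/k!\to0$. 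Once the interchange is secured, the remaining computation is the routine gamma integral and a direct comparison with the definition \eqref{psifunction}, so no further difficulty arises.
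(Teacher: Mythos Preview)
Your proposal is correct and follows essentially the same route as the paper: condition on $\Gamma$, substitute the series \eqref{sfppeq} and the gamma density, swap sum and integral, evaluate the gamma integral, and identify the result with the ${}_{2}\psi_{1}$ series. The paper carries out exactly these steps without pausing to justify the interchange; your added remark on absolute convergence via the Wright-function criterion is a reasonable supplement (though note that with upper parameters $(1,\beta),(p,\beta)$ and lower parameter $(1-n,\beta)$ one gets $\Delta=\beta-(\beta+\beta)=-\beta$, not $\beta-1$; either way $\Delta>-1$ and the series is entire, so your conclusion stands).
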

When $\beta=1$, we get
\begin{align*}
 \widetilde{\eta}_{1}(n|t,\alpha,p)&=\frac{1}{\Gamma(p)}\frac{(-1)^{n}}{n!}\sum\limits_{k=0}^{\infty}\frac{\Gamma( k+1)\Gamma( k+p)}{\Gamma(k+1-n)}\frac{(-t/\alpha)^{k}}{k!} \\
 &=\frac{1}{\Gamma(p)}\frac{(-1)^{n}}{n!}\sum\limits_{k=0}^{\infty}\frac{\Gamma( k+p)}{(k-n)!}\bigg(\frac{-t}{\alpha}\bigg)^{k}\,\,\,\,\ (\text{put }j=k-n)\\ 
 &=\frac{1}{\Gamma(p)}\frac{(-1)^{n}}{n!}\sum\limits_{n+j=0}^{\infty}\frac{\Gamma( n+j+p)}{j!}\bigg(\frac{-t}{\alpha}\bigg)^{n+j}\\
&=\frac{\Gamma(n+p)}{\Gamma(p)\alpha^{n}}\frac{t^{n}}{n!}\sum\limits_{j=0}^{\infty}\frac{\Gamma( n+j+p)}{\Gamma(j+1)\Gamma(n+p)}\bigg(\frac{-t}{\alpha}\bigg)^{j}\\
&=\frac{t^n}{n!}\frac{\Gamma(n+p)}{\Gamma(p)}\frac{\alpha^p}{(t+\alpha)^{p+n}},
\end{align*}
which is the {\it pmf} of $\text{NB}(p,\frac{t}{\alpha+t})$, as expected.
\begin{remark}
 Consider the time fractional generalization of the Polya process, namely, $\{N_{\beta}(t,\Gamma)\}_{t\geq0}$, where $\Gamma$ is independent of $\{N_{\beta}(t,\lambda)\}_{t\geq0}$, would also be of interest. It seems difficult to compute its $pmf$ given by 
 \begin{align}
  \mathbb{P}[N_{\beta}(t,\Gamma)=n]&=\int_{0}^{\infty}p_{_{\beta}}(n|t,x)g(x|\alpha,p)dx.\nonumber
  \end{align}
\end{remark}

\begin{theorem}
 The SFPP $\{\widetilde{W}^{\Gamma}_{\beta}(t)\}_{t\geq0}$ has stationary increments and is stochastically continuous.
\end{theorem}
\begin{proof}
 Consider first the Polya process $\{W^{\Gamma}(t)\}_{t\geq0}$.\\
 (i) Stationary increments: Let $B$ be a Borel set. Then for $t\geq0,s>0$,
  \begin{align*}
   \P[W^{\Gamma}(t+s)-W^{\Gamma}(s)\in B]&=\E\big[\P[N(t+s,\Gamma)-N(s,\Gamma)\in B|\Gamma]\big]\nno\\
   &=\E\big[\P[N(t,\Gamma)\in B|\Gamma]\big]=\P[W^{\Gamma}(t)\in B],
  \end{align*}
  showing that $\{W^{\Gamma}(t)\}_{t\geq0}$ has stationary increments. It is known that the time change of a process with stationary increments by a process with stationary increments has stationary increments (see \cite[Theorem 1.3.25]{appm}). Since $\{D_{\beta}(t)\}_{t\geq0}$ is a L\'evy process, $\{\widetilde{W}^{\Gamma}_{\beta}(t)\}_{t\geq0}$ also has stationary increments.\\
(ii) Stochastic continuity: Note first that for any process $\{X(t)\}_{t\geq0}$ with stationary increments,
 $$\lim_{t\rightarrow s}\P[|X(t)-X(s)|>a]= 0\Rightarrow \lim_{t\rightarrow 0}\P[|X(t)|>a]=0~~\text{for } a>0.$$ Given $\epsilon>0$, choose $\lambda_{0}$ large enough such that $\int_{\lambda_{0}}^{\infty}g(\lambda|\alpha,p)d\lambda<\epsilon/2$ and since the Poisson process $\{N(t,\lambda)\}_{t\geq0}$ is stochastically continuous, we have for the given $\lambda_{0}$ and $a>0$, there exists a $\delta>0$ such that $\P[N(t,\lambda_{0})>a]<\epsilon/2$  for all $t\in(0,\delta)$.
Suppose $W^{\Gamma}(t)=N(t,\Gamma)$ is not stochastically continuous, then there exists a $t_{0}\in(0,\delta)$ such that $\P[W^{\Gamma}(t_{0})>a]\geq\epsilon$. Again, for $t_{0}\in(0,\delta)$,
\begin{align*}
 \P[W^{\Gamma}(t_{0})>a] &=\E[\P[N(t_{0},\Gamma)>a|\Gamma]]=\int_{0}^{\infty}\mathbb{P}[N(t_{0},\lambda)>a]g(\lambda|\alpha,p)d\lambda\\ 
 &=\int_{0}^{\lambda_{0}}\P[N(t_{0},\lambda)>a]g(\lambda|\alpha,p)d\lambda+\int_{\lambda_{0}}^{\infty}\P[N(t_{0},\lambda)>a]g(\lambda|\alpha,p)d\lambda.
 \intertext{Since $\mathbb{P}[N(t_{0},\lambda)>a]$ is an increasing function of $\lambda,$ we have}
 \mathbb{P}[N(t_{0},\lambda)>a]&<\mathbb{P}[N(t_{0},\lambda_{0})>a]<\epsilon/2~\text{for }0<\lambda\leq\lambda_{0}.\nonumber
 \shortintertext{Hence,}
 \P[W^{\Gamma}(t_{0})>a]&<\frac{\epsilon}{2}\int_{0}^{\lambda_{0}}g(\lambda|\alpha,p)d\lambda+\int_{\lambda_{0}}^{\infty}g(\lambda|\alpha,p)d\lambda\leq\epsilon/2+\epsilon/2= \epsilon,
\end{align*}
which is a contradiction. Hence, $\{W^{\Gamma}(t)\}_{t\geq0}$ is stochastically continuous. Also, by the similar conditioning arguments, it follows that $\{\widetilde{W}^{\Gamma}_{\beta}(t)\}_{t\geq0}$ is also stochastically continuous.
\end{proof}
\begin{remark}\label{iipolya}
The Polya process $\{W^{\Gamma}(t)\}_{t\geq0}$ and the SFPP $\{\widetilde{W}_{\beta}^{\Gamma}(t)\}_{t\geq0}$ are not L\'evy processes, since they do not have independent increments. To see this, let $0\leq t_{1}<t_{2}<t_{3}<\infty$ and $B_{1},B_{2}$ be Borel sets. Then
\begin{align}
\P[W^{\Gamma}(t_{2})-&W^{\Gamma}(t_{1})\in B_{1};W^{\Gamma}(t_{3})-W^{\Gamma}(t_{2})\in B_{2}]\nno\\
&=\E\Big[\P\big[N(t_{2},\Gamma)-N(t_{1},\Gamma)\in B_{1};N(t_{3},\Gamma)-N(t_{2},\Gamma)\in B_{2}|\Gamma\big]\Big]\nno\\
 &=\E\Big[\P\big[N(t_{2},\Gamma)-N(t_{1},\Gamma)\in B_{1}|\Gamma\big]\P\big[N(t_{3},\Gamma)-N(t_{2},\Gamma)\in B_{2}|\Gamma\big]\Big]\label{iipp1}\\
 &\neq\E\Big[\P\big[N(t_{2},\Gamma)-N(t_{1},\Gamma)\in B_{1}|\Gamma\big]\Big]\E\Big[\P\big[N(t_{3},\Gamma)-N(t_{2},\Gamma)\in B_{2}|\Gamma\big]\Big] \label{iipp2}\\
 &=\P[W^{\Gamma}(t_{2})-W^{\Gamma}(t_{1})\in B_{1}]\P[W^{\Gamma}(t_{3})-W^{\Gamma}(t_{2})\in B_{2}].\nonumber
\end{align}
We next show that the right-hand side of \eqref{iipp1} is not equal to the right-hand side of \eqref{iipp2}. Take for example $t_{1}=1,~t_{2}=2,~t_{3}=3, B_{1}=\{n\},$ and $B_{2}=\{m\}$. Then the  right-hand side of \eqref{iipp1} is
\begin{align}
\E\big[\P[N(1,\Gamma)=n|\Gamma]\P[N(1,\Gamma)=m|\Gamma]\big]&=\E\left[\frac{\Gamma^{n}e^{-\Gamma}}{n!}\frac{\Gamma^{m}e^{-\Gamma}}{m!}\right]\nonumber\\
&=\frac{1}{n!m!}\int_{0}^{\infty}y^{n+m}e^{-2y}g(y|\alpha,p)dy\nonumber\\
&=\frac{1}{n!m!}\frac{\alpha^{p}}{\Gamma(p)}\frac{\Gamma(n+m+p)}{(\alpha+2)^{n+m+p}}.\label{first-polya-increments}
\end{align}
\begin{align}
\shortintertext{Again, the right-hand side of \eqref{iipp2}, }
\E\Big[\P\big[N(1,\Gamma)=n|\Gamma\big]\Big]&\E\Big[\P\big[N(2,\Gamma)=m|\Gamma\big]\Big]=\E\left[\frac{\Gamma^{n}e^{-\Gamma}}{n!}\right]\E\left[\frac{\Gamma^{m}e^{-\Gamma}}{m!}\right]\nonumber\\
&=\frac{1}{n!m!}\frac{\alpha^{2p}}{\Gamma^{2}(p)}\frac{\Gamma(n+p)\Gamma(m+p)}{(\alpha+1)^{n+m+2p}}.\label{second-polya-increments}
\end{align}
It can be seen that the right-hand side of \eqref{first-polya-increments} and \eqref{second-polya-increments} are different. In a similar way, we can also prove that $\{\widetilde{W}^{\Gamma}_{\beta}(t)\}_{t\geq0}$ does not have independent increments.
\end{remark}

\begin{remark}The mean $\E[\widetilde{W}^{\Gamma}_{\beta}(t)]$ is infinite, which can be seen as follows. The {\it pgf} of $\widetilde{W}^{\Gamma}_{\beta}(t)$ is, for $|u|\leq1$, \begin{align*}
  \E[u^{\widetilde{W}^{\Gamma}_{\beta}(t)}]=\int_{0}^{\infty}\E[u^{\tilde{N}_{\beta}(t,\lambda)}]g(\lambda|\alpha,p)d\lambda=\int_{0}^{\infty}e^{\lambda^{\beta}t(1-u)^{\beta}}g(\lambda|\alpha,p)d\lambda,
 \end{align*}
see for example \cite[eq. (2.12)]{sfpp}. Now differentiate both sides with respect to $u$, and let $u\rightarrow1$ to obtain infinity.
\end{remark}

\subsection{Connections to \textbf{\textit{pde's}}}We here discuss some $pde$ connections associated with the distributions of the SFPP.\\
First, we establish a result for the process $\{\widetilde{W}^{\Gamma}_{\beta}(t)\}_{t\geq0}$, similar to \eqref{orsipolitosfpp}.
\begin{theorem}Let $k\in\Z_{+}\backslash\{0\}$. The {\it pmf} \eqref{fracpolya} satisfies the following $pde$ in time variable $t$:
 \begin{flalign*} 
  &&\frac{\partial^{k}}{\partial t^{k}}\widetilde{\eta}_{_{\beta}}(n|t,\alpha,p)&=\bigg(-\frac{(1-B_{n})^{\beta}\Gamma(p+\beta)}{\alpha^{\beta}\Gamma(p)}\bigg)^{k}\widetilde{\eta}_{_{\beta}}(n|t,\alpha,p+k\beta)&
 \end{flalign*} 
  $\text{with }\widetilde{\eta}_{_{\beta}}(n|0,\alpha,p)=1\text{ if }n=0\text{ and }\text{zero otherwise.}$
  \end{theorem}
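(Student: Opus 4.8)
The plan is to transfer the governing equation \eqref{orsipolitosfpp} of the space fractional Poisson {\it pmf} to $\widetilde{\eta}_{\beta}$ via the mixture representation \eqref{fracpolya1}, $\widetilde{\eta}_{\beta}(n|t,\alpha,p)=\int_{0}^{\infty}\widetilde{p}_{\beta}(n|t,\lambda)\,g(\lambda|\alpha,p)\,d\lambda$. First I would differentiate once in $t$ under the integral sign. To justify the interchange, observe from \eqref{orsipolitosfpp} that $\frac{\partial}{\partial t}\widetilde{p}_{\beta}(n|t,\lambda)=-\lambda^{\beta}(1-B_{n})^{\beta}\widetilde{p}_{\beta}(n|t,\lambda)$; since the operator $(1-B_{n})^{\beta}$ has absolutely summable coefficients and $0\le\widetilde{p}_{\beta}(\cdot|t,\lambda)\le 1$, this derivative is bounded by $C\lambda^{\beta}$ uniformly in $t$, and $\int_{0}^{\infty}\lambda^{\beta}g(\lambda|\alpha,p)\,d\lambda=\Gamma(p+\beta)/(\alpha^{\beta}\Gamma(p))<\infty$, so a standard dominated-convergence argument applies on every compact $t$-interval (and the analogous bound with $\lambda^{k\beta}$ handles the higher derivatives needed for general $k$).

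The key algebraic input is the absorption identity for the gamma density, immediate from \eqref{gammaden},
\begin{equation*}
\lambda^{\beta}g(\lambda|\alpha,p)=\frac{\Gamma(p+\beta)}{\alpha^{\beta}\Gamma(p)}\,g(\lambda|\alpha,p+\beta).
\end{equation*}
Substituting the governing equation, pulling the operator $(1-B_{n})^{\beta}$ (which acts only on the index $n$ and involves neither $t$ nor $\lambda$) out of the integral, and applying this identity gives the case $k=1$:
\begin{equation*}
\frac{\partial}{\partial t}\widetilde{\eta}_{\beta}(n|t,\alpha,p)=-(1-B_{n})^{\beta}\int_{0}^{\infty}\widetilde{p}_{\beta}(n|t,\lambda)\,\lambda^{\beta}g(\lambda|\alpha,p)\,d\lambda=-\frac{(1-B_{n})^{\beta}\Gamma(p+\beta)}{\alpha^{\beta}\Gamma(p)}\,\widetilde{\eta}_{\beta}(n|t,\alpha,p+\beta).
\end{equation*}

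For general $k$ I would induct. The prefactor above is independent of $t$, so applying $\frac{\partial}{\partial t}$ again and invoking the $k=1$ identity with $p$ replaced by $p+\beta$ handles $k=2$; iterating, the $j$-th application ($1\le j\le k$) produces the factor $-(1-B_{n})^{\beta}\Gamma(p+j\beta)/(\alpha^{\beta}\Gamma(p+(j-1)\beta))$, after which the successive gamma ratios telescope and the discrete operators compose (using that $(1-B_{n})^{\beta}$ commutes with $\frac{\partial}{\partial t}$ and with itself), yielding the asserted formula with $\widetilde{\eta}_{\beta}(n|t,\alpha,p+k\beta)$ on the right. The initial condition is read off directly: $\widetilde{p}_{\beta}(n|0,\lambda)$ equals $1$ for $n=0$ and $0$ otherwise by \eqref{orsipolitosfpp}, so integrating against $g(\cdot|\alpha,p)$ gives $\widetilde{\eta}_{\beta}(n|0,\alpha,p)=1$ if $n=0$ and $0$ otherwise; the specialization $\beta=1$ should recover the difference--differential equation for the Polya {\it pmf} $\eta(n|t,\alpha,p)$ derived earlier, a useful check.

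The only real obstacle is the analytic justification of (repeated) differentiation under the integral sign, handled by the uniform domination above; the rest is bookkeeping with the shift operator and the gamma ratios. An alternative route avoids the {\it pmf} series entirely: differentiate the {\it pgf} $\E[u^{\widetilde{W}^{\Gamma}_{\beta}(t)}]=\int_{0}^{\infty}e^{\lambda^{\beta}t(1-u)^{\beta}}g(\lambda|\alpha,p)\,d\lambda$ in $t$, apply the same absorption identity to shift $p\mapsto p+\beta$, note that multiplication by $(1-u)^{\beta}$ is the {\it pgf}-symbol of the operator $(1-B_{n})^{\beta}$, and extract the coefficient of $u^{n}$; iterating gives the general $k$.
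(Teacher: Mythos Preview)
Your approach is the same as the paper's: differentiate the mixture representation \eqref{fracpolya1} under the integral sign, invoke \eqref{orsipolitosfpp}, pull the operator $(1-B_{n})^{\beta}$ outside the integral, apply the absorption identity $\lambda^{\beta}g(\lambda|\alpha,p)=\tfrac{\Gamma(p+\beta)}{\alpha^{\beta}\Gamma(p)}g(\lambda|\alpha,p+\beta)$ (this is \eqref{gammaform} in the paper), and then iterate. The paper simply writes ``repeating the above computation $k$ times, we get the desired result'' where you spell out the induction; you also supply the dominated-convergence justification the paper omits.

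There is, however, a genuine issue at your last step. Your iteration correctly produces the product
\[
\prod_{j=1}^{k}\Bigl(-\tfrac{(1-B_{n})^{\beta}\Gamma(p+j\beta)}{\alpha^{\beta}\Gamma(p+(j-1)\beta)}\Bigr),
\]
and the gamma ratios do telescope, but they telescope to $\Gamma(p+k\beta)/\Gamma(p)$, so what the argument actually yields is
\[
\frac{\partial^{k}}{\partial t^{k}}\widetilde{\eta}_{\beta}(n|t,\alpha,p)
=(-1)^{k}\,\frac{(1-B_{n})^{k\beta}}{\alpha^{k\beta}}\,\frac{\Gamma(p+k\beta)}{\Gamma(p)}\,\widetilde{\eta}_{\beta}(n|t,\alpha,p+k\beta).
\]
The stated theorem, on the other hand, has the prefactor $\bigl(-\tfrac{(1-B_{n})^{\beta}\Gamma(p+\beta)}{\alpha^{\beta}\Gamma(p)}\bigr)^{k}$, i.e.\ the scalar part $\bigl(\Gamma(p+\beta)/\Gamma(p)\bigr)^{k}$, and in general $\bigl(\Gamma(p+\beta)/\Gamma(p)\bigr)^{k}\neq \Gamma(p+k\beta)/\Gamma(p)$ for $k\ge 2$ (take $p=\beta=1$, $k=2$: one gets $1$ versus $2$). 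So your assertion that the telescoping ``yields the asserted formula'' is incorrect for $k\ge 2$; you have derived the right identity but not the displayed one. The paper's own proof has exactly the same gap: it verifies $k=1$ and then asserts the displayed result follows by repetition, without checking that the iterated factor matches the $k$-th power written in the statement.
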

\begin{proof} Note from \eqref{fracpolya1},
 \begin{align}
   \frac{\partial}{\partial t}\widetilde{\eta}_{_{\beta}}(n|t,\alpha,p)&=\int_{0}^{\infty}\frac{\partial}{\partial t}\widetilde{p}_{_{\beta}}(n|t,y)g(y|\alpha,p)dy\nno\\
   &=\int_{0}^{\infty}-y^{\beta}(1-B_{n})^{\beta}\widetilde{p}_{_{\beta}}(n|t,y)g(y|\alpha,p)dy\,\,\,\,(\text{using \eqref{orsipolitosfpp}})\nno\\
   &=-(1-B_{n})^{\beta}\int_{0}^{\infty}y^{\beta}\widetilde{p}_{_{\beta}}(n|t,y)g(y|\alpha,p)dy\nno\\
   &=-\frac{(1-B_{n})^{\beta}\Gamma(p+\beta)}{\alpha^{\beta}\Gamma(p)}\widetilde{\eta}_{_{\beta}}(n|t,\alpha,p+\beta).\label{fracpolyapmfpde-1}
 \end{align}
 The last step is due to the fact 
\begin{equation}\label{gammaform}
y^{\beta}g(y|\alpha,p)=\frac{\Gamma(p+\beta)}{\alpha^{\beta}\Gamma(p)}g(y|\alpha,p+\beta).
\end{equation}
 Now repeating the above computation $k$ times, we get the desired result.\end{proof}
 \begin{corollary}
 The {\it pgf} $G_{\beta}(u|t,\alpha,p)=\E[u^{\widetilde{W}^{\Gamma}_{\beta}(t)}],~|u|\leq1$,  satisfies the following $k$-th order $pde$:
 \begin{align}
  \frac{\partial^{k}}{\partial t^{k}}G_{\beta}(u|t,\alpha,p)=\bigg(-(1-u)^{\beta}\frac{\Gamma(p+\beta)}{\alpha^{\beta}\Gamma(p)}\bigg)^{k}G_{\beta}(u|t,\alpha,p+k\beta),\end{align}
  where $G_{\beta}(u|0,\alpha,p)=1$, and $k\in\Z_{+}\backslash \{0\}$.
\end{corollary}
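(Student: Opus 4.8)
\noindent The plan is to derive the corollary from the differential identity for the one-dimensional distributions $\widetilde{\eta}_{_{\beta}}(n|t,\alpha,p)$ established in the preceding theorem, by applying to both sides the probability generating function operation $\sum_{n\in\Z_{+}}u^{n}(\cdot)$. As a first step I would note that, for each fixed $t\geq0$, $\alpha>0$ and $p>0$, the sequence $n\mapsto\widetilde{\eta}_{_{\beta}}(n|t,\alpha,p)$ is a {\it pmf}, so the series $G_{\beta}(u|t,\alpha,p)=\sum_{n=0}^{\infty}u^{n}\widetilde{\eta}_{_{\beta}}(n|t,\alpha,p)$ converges absolutely and uniformly on $\{|u|\leq1\}$; in particular, the initial condition $\widetilde{\eta}_{_{\beta}}(0|0,\alpha,p)=1$ and $\widetilde{\eta}_{_{\beta}}(n|0,\alpha,p)=0$ for $n\geq1$ immediately gives $G_{\beta}(u|0,\alpha,p)=1$.

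Next I would multiply the identity
\[
\frac{\partial^{k}}{\partial t^{k}}\widetilde{\eta}_{_{\beta}}(n|t,\alpha,p)=\bigg(-\frac{(1-B_{n})^{\beta}\Gamma(p+\beta)}{\alpha^{\beta}\Gamma(p)}\bigg)^{k}\widetilde{\eta}_{_{\beta}}(n|t,\alpha,p+k\beta)
\]
by $u^{n}$ and sum over $n\in\Z_{+}$. On the left-hand side, interchanging $\sum_{n}$ with $\partial_{t}^{k}$ gives $\partial_{t}^{k}G_{\beta}(u|t,\alpha,p)$. For the right-hand side, the key observation is that the backward shift operator acts as multiplication by $u$ on generating functions: since $\widetilde{\eta}_{_{\beta}}(m|t,\alpha,p+k\beta)=0$ for $m<0$, one has $\sum_{n\geq0}u^{n}B_{n}^{j}\widetilde{\eta}_{_{\beta}}(n|t,\alpha,p+k\beta)=u^{j}G_{\beta}(u|t,\alpha,p+k\beta)$ for every $j\in\Z_{+}$, so, expanding $(1-B_{n})^{k\beta}=\sum_{j\geq0}\binom{k\beta}{j}(-B_{n})^{j}$ and summing term by term, $\sum_{n\geq0}u^{n}(1-B_{n})^{k\beta}\widetilde{\eta}_{_{\beta}}(n|t,\alpha,p+k\beta)=(1-u)^{k\beta}G_{\beta}(u|t,\alpha,p+k\beta)$. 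Pulling out the constant $(-\Gamma(p+\beta)/(\alpha^{\beta}\Gamma(p)))^{k}$ and writing $(1-u)^{k\beta}=((1-u)^{\beta})^{k}$ then produces exactly
\[
\frac{\partial^{k}}{\partial t^{k}}G_{\beta}(u|t,\alpha,p)=\bigg(-(1-u)^{\beta}\frac{\Gamma(p+\beta)}{\alpha^{\beta}\Gamma(p)}\bigg)^{k}G_{\beta}(u|t,\alpha,p+k\beta),
\]
and combining with the initial value from the first step completes the proof.

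The step I expect to be the main obstacle is the rigorous justification of the two interchanges above. For swapping $\sum_{n}$ with $\partial_{t}^{k}$ it suffices to bound the differentiated series uniformly in $t$ on compact intervals; this can be read off either from the series \eqref{fracpolya} for $\widetilde{\eta}_{_{\beta}}$ or, more transparently, from the integral representation of $G_{\beta}$ noted earlier, whose $t$-derivatives of all orders are finite and may be taken under the integral sign. For swapping $\sum_{j}$ with $\sum_{n}$ in the binomial expansion of $(1-B_{n})^{k\beta}$, one uses $\sum_{j\geq0}\big|\binom{k\beta}{j}\big|<\infty$ --- a finite sum if $k\beta$ is a positive integer and otherwise convergent because $\big|\binom{k\beta}{j}\big|$ decays like $j^{-k\beta-1}$ --- together with $|u|\leq1$, so that Fubini's theorem applies. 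As a consistency check, for $k=1$ the argument reduces, after summation, to the generating-function form of \eqref{fracpolyapmfpde-1}; and as an independent route one could instead differentiate the integral representation of $G_{\beta}$ directly $k$ times and iterate \eqref{gammaform}.
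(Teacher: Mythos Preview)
Your proposal is correct and follows essentially the same route as the paper: multiply the pmf identity by $u^{n}$, sum over $n$, and use the binomial expansion of the fractional shift operator together with $\sum_{n}u^{n}B_{n}^{j}\widetilde{\eta}_{_{\beta}}(n|\cdot)=u^{j}G_{\beta}(u|\cdot)$ to turn $(1-B_{n})^{\beta}$ into $(1-u)^{\beta}$. The only organizational difference is that the paper carries this out for $k=1$ (starting from \eqref{fracpolyapmfpde-1}) and then writes ``taking the derivative $k$ times, we get the result,'' i.e.\ iterates at the generating-function level, whereas you invoke the general-$k$ theorem at the pmf level and sum once; the two orderings are equivalent, and your explicit discussion of the interchange justifications is more detailed than the paper's.
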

\begin{proof} Note that
\begin{equation*}
 (1-B_{n})^{\beta}=\sum_{r=0}^{\infty}\frac{\Gamma(\beta+1)}{\Gamma(r+1)\Gamma(\beta-r+1)}(-1)^{r}B_{n}^{r}.
\end{equation*}
From \eqref{fracpolyapmfpde-1},
\begin{align*}
 \frac{\partial}{\partial t}G_{\beta}(u|t,\alpha,p)&=\frac{\partial}{\partial t}\sum_{n=0}^{\infty}u^{n}\widetilde{\eta}_{_{\beta}}(n|t,\alpha,p)=\sum_{n=0}^{\infty}u^{n}\left(-(1-B_{n})^{\beta}\frac{\Gamma(p+\beta)}{\alpha^{\beta}\Gamma(p)}\right)\widetilde{\eta}_{_{\beta}}(n|t,\alpha,p+\beta)\\
 &=-\frac{\Gamma(p+\beta)}{\alpha^{\beta}\Gamma(p)}\sum_{n=0}^{\infty}u^{n}(1-B_{n})^{\beta}\widetilde{\eta}_{_{\beta}}(n|t,\alpha,p+\beta)\\
 &=-\frac{\Gamma(p+\beta)}{\alpha^{\beta}\Gamma(p)}\sum_{n=0}^{\infty}u^{n}\sum_{r=0}^{\infty}\frac{\Gamma(\beta+1)}{\Gamma(r+1)\Gamma(\beta-r+1)}(-1)^{r}B_{n}^{r}\widetilde{\eta}_{_{\beta}}(n|t,\alpha,p+\beta)\\
  &=-\frac{\Gamma(p+\beta)}{\alpha^{\beta}\Gamma(p)}\sum_{n=0}^{\infty}u^{n}\sum_{r=0}^{n}\frac{\Gamma(\beta+1)}{\Gamma(r+1)\Gamma(\beta-r+1)}(-1)^{r}\widetilde{\eta}_{_{\beta}}(n-r|t,\alpha,p+\beta)\\
 &=-\frac{\Gamma(p+\beta)}{\alpha^{\beta}\Gamma(p)}\sum_{r=0}^{\infty}\frac{\Gamma(\beta+1)}{\Gamma(r+1)\Gamma(\beta-r+1)}(-1)^{r}\sum_{n=r}^{\infty}u^{n}\widetilde{\eta}_{_{\beta}}(n-r|t,\alpha,p+\beta)\\
 &=-\frac{\Gamma(p+\beta)}{\alpha^{\beta}\Gamma(p)}\sum_{r=0}^{\infty}\frac{\Gamma(\beta+1)}{\Gamma(r+1)\Gamma(\beta-r+1)}(-1)^{r}\sum_{n=0}^{\infty}u^{n+r}\widetilde{\eta}_{_{\beta}}(n|t,\alpha,p+\beta)\\
 &=-(1-u)^{\beta}\frac{\Gamma(p+\beta)}{\alpha^{\beta}\Gamma(p)}G_{\beta}(u|t,\alpha,p+\beta).
\end{align*}
\noindent Taking the derivative $k$ times, we get the result.
\end{proof}

 \noindent Finally, we obtain the following result for the variable $p$.
 \begin{theorem}\label{pvarsfpppde}
 The {\it pmf} of the SFPP, given in \eqref{fracpolya}, satisfies the following fractional $pde$:
 \begin{align}\label{rlpdfpolya}
 \partial_{p}^{\nu}\widetilde{\eta}_{_\beta}(n|t,\alpha,p)=\partial_{p}^{\nu-1}(\log(\alpha)-\psi(p))\widetilde{\eta}_{_\beta}(n|t,\alpha,p)+\int_{0}^{\infty}\widetilde{p}_{_\beta}(n|t,\lambda)\log(\lambda)\partial_{p}^{\nu-1}g(\lambda|\alpha,p)d\lambda,
  \end{align}
  where $\widetilde{\eta}_{_{\beta}}(n|0,\alpha,p)=1 \mbox{ if }n=0\mbox{ and zero otherwise} .$
  \begin{proof} Note that
\begin{align*}
 \partial_{p}^{\nu}\widetilde{\eta}_{_{\beta}}(n|t,\alpha,p)=\partial_{p}^{\nu}\int_{0}^{\infty}\widetilde{p}_{_\beta}(n|t,\lambda)g(\lambda|\alpha,p)d\lambda&=\int_{0}^{\infty}\widetilde{p}_{_\beta}(n|t,\lambda)\partial_{p}^{\nu}g(\lambda|\alpha,p)d\lambda.
 \end{align*}
The proof now follows from Lemma \ref{rlfdegamma}.
\end{proof}
  \end{theorem}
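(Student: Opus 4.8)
The plan is to run the same scheme used for Theorem~\ref{rlfnbpdiff}, now differentiating fractionally with respect to the shape parameter~$p$. The starting point is the mixture representation \eqref{fracpolya1}, namely $\widetilde{\eta}_{_\beta}(n|t,\alpha,p)=\int_{0}^{\infty}\widetilde{p}_{_\beta}(n|t,\lambda)\,g(\lambda|\alpha,p)\,d\lambda$. Fix $m-1<\nu<m$ with $m\in\Z_{+}\backslash\{0\}$ and write, from the definition \eqref{rld} of the R--L derivative,
\[
\partial_{p}^{\nu}\widetilde{\eta}_{_\beta}(n|t,\alpha,p)=\frac{1}{\Gamma(m-\nu)}\frac{d^{m}}{dp^{m}}\int_{0}^{p}\int_{0}^{\infty}\widetilde{p}_{_\beta}(n|t,\lambda)\,\frac{g(\lambda|\alpha,r)}{(p-r)^{\nu+1-m}}\,d\lambda\,dr .
\]
The first step is to justify interchanging the $\lambda$-integration with both the $dr$-integration and the outer $d^{m}/dp^{m}$. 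This goes exactly as in the proof of Theorem~\ref{rlfnbpdiff}: since $\widetilde{p}_{_\beta}(n|t,\lambda)$ is a probability, $0\le\widetilde{p}_{_\beta}(n|t,\lambda)\le1$, and $\int_{0}^{\infty}g(\lambda|\alpha,r)\,d\lambda=1$, so the iterated integral is dominated by $\int_{0}^{p}(p-r)^{m-\nu-1}dr=p^{m-\nu}/(m-\nu)<\infty$; Fubini--Tonelli then permits pulling the $\lambda$-integral outside. This yields
\[
\partial_{p}^{\nu}\widetilde{\eta}_{_\beta}(n|t,\alpha,p)=\int_{0}^{\infty}\widetilde{p}_{_\beta}(n|t,\lambda)\,\partial_{p}^{\nu}g(\lambda|\alpha,p)\,d\lambda .
\]

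The second step is to invoke Lemma~\ref{rlfdegamma}. That lemma is stated for the $t$-derivative of $g(y|\alpha,pt)$, but specialising it to rate $1$ (i.e.\ replacing the parameter $p$ there by $1$) and then renaming the shape variable $t$ by $p$ gives, by the identical Hankel-contour computation,
\[
\partial_{p}^{\nu}g(\lambda|\alpha,p)=\partial_{p}^{\nu-1}\big(\log(\alpha\lambda)-\psi(p)\big)g(\lambda|\alpha,p),\qquad \nu\ge0 .
\]
Substituting this and splitting $\log(\alpha\lambda)=\log\alpha+\log\lambda$, with $\log\lambda$ constant in $p$, gives
\[
\partial_{p}^{\nu}\widetilde{\eta}_{_\beta}(n|t,\alpha,p)=\int_{0}^{\infty}\widetilde{p}_{_\beta}(n|t,\lambda)\Big[\partial_{p}^{\nu-1}\big(\log\alpha-\psi(p)\big)g(\lambda|\alpha,p)+\log\lambda\,\partial_{p}^{\nu-1}g(\lambda|\alpha,p)\Big]d\lambda .
\]

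The third step is cosmetic: in the first summand, pulling the $p$-operator back through the $\lambda$-integral (again as in step one) identifies $\int_{0}^{\infty}\widetilde{p}_{_\beta}(n|t,\lambda)\,\partial_{p}^{\nu-1}\big(\log\alpha-\psi(p)\big)g(\lambda|\alpha,p)\,d\lambda=\partial_{p}^{\nu-1}\big(\log\alpha-\psi(p)\big)\widetilde{\eta}_{_\beta}(n|t,\alpha,p)$, while the second summand is already in the desired form; together these give \eqref{rlpdfpolya}. The initial condition $\widetilde{\eta}_{_\beta}(n|0,\alpha,p)$, equal to $1$ when $n=0$ and $0$ otherwise, is read off from \eqref{fracpolya} (equivalently from $\widetilde{W}^{\Gamma}_{\beta}(0)=0$). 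The only genuine technical point — the ``hard part'' — is the justification of the two interchanges of the fractional operators $\partial_{p}^{\nu}$ and $\partial_{p}^{\nu-1}$ with the $\lambda$-integral; once these are in place (and they are, by the boundedness of $\widetilde{p}_{_\beta}$ together with the integrable singularity of the R--L kernel, exactly as in Theorem~\ref{rlfnbpdiff}), the statement is an immediate consequence of Lemma~\ref{rlfdegamma}.
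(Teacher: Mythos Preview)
Your proposal is correct and follows essentially the same approach as the paper: interchange $\partial_{p}^{\nu}$ with the $\lambda$-integral in the mixture representation \eqref{fracpolya1} and then apply Lemma~\ref{rlfdegamma}. You have simply filled in the details that the paper leaves implicit---the Fubini--Tonelli justification (mirroring Theorem~\ref{rlfnbpdiff}), the specialisation of Lemma~\ref{rlfdegamma} from $g(y|\alpha,pt)$ to $g(\lambda|\alpha,p)$, and the split $\log(\alpha\lambda)=\log\alpha+\log\lambda$---but the route is identical.
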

 \noindent{\bf Acknowledgements.} The authors are deeply grateful to the referees for their detailed report and numerous critical comments and suggestions which improved the paper significantly, both in the content and the quality of the paper.
\def\cprime{$'$}



\begin{thebibliography}{99}

\bibitem{appm}
D.~Applebaum.
\newblock {\em L\'evy Processes and Stochastic Calculus}, 
\newblock Second edition, Cambridge University Press, Cambridge 2009.

\bibitem{askey}
G. E.~Andrews, R. Askey \and R. Roy.
\newblock {\em Special Functions}, 
\newblock Cambridge University Press, Cambridge 1999.


\bibitem{beghin8may}
L.~Beghin.
\newblock {\it Fractional gamma processes and fractional gamma-subordinated
  processes.}
\newblock arXiv:1305.1753 [math.PR] (2013).

\bibitem{BegClau14}
L.~Beghin \and C.~Macci.
\newblock {\it Fractional discrete processes: compound and mixed poisson
  representations,}
\newblock J. Appl. Probab. 51(1) (2014), pp. 9--36.

\bibitem{BegOrs09}
L.~Beghin \and E.~Orsingher.
\newblock {\it Fractional {P}oisson processes and related planar random motions},
\newblock Electron. J. Probab. 14 (2009), 1790--1827.


\bibitem{biardapp}
R.~Biard \and B.~Saussereau,
\newblock {\it Fractional poisson process: long-range dependence, applications in
  ruin theory},
\newblock J. Appl. Probab. 51(3) (2014), pp. 727--740.

\bibitem{bingham71}
N.~H. Bingham,
\newblock {\it Limit theorems for occupation times of {M}arkov processes},
\newblock Z. Wahrscheinlichkeitstheorie und Verw. Gebiete 17 (1971), pp. 1--22.

\bibitem{ovi-lrd}
M.~D'Ovidio \and E.~ Nane,
\newblock {\it Time dependent random fields on spherical non-homogeneous surfaces},
\newblock Stochastic Process. Appl. 124 (2014), pp. 2098--2131.

\bibitem{erde3}
A.~Erd{\'e}lyi, W.~Magnus, F.~Oberhettinger, \and F.~G. Tricomi,
\newblock {\it Higher Transcendental Functions},
\newblock McGraw-Hill Book Company, Inc., {V}ol. {III}, New York-Toronto-London 1955.


\bibitem{fell}
W.~Feller,
\newblock {\it An Introduction to Probability Theory and its Applications},
\newblock Second edition. John Wiley \& Sons Inc.,{V}ol. {II}, New York 1971.

\bibitem{gorenmain}
R.~Gorenflo \and F.~Mainardi,
\newblock {\it On the fractional {P}oisson process and the discretized stable
  subordinator}
\newblock arXiv:1305.3074 [math.PR] (2013).

\bibitem{wright}
A.~A. Kilbas, M.~Saigo, \and J.~J. Trujillo,
\newblock {\it On the generalized {W}right function},
\newblock Fract. Calc. Appl. Anal. 5 (2002), pp. 437--460.

\bibitem{KilSriTru06}
A.~A. Kilbas, H.~M. Srivastava, \and J.~J. Trujillo,
\newblock {\it Theory and applications of fractional differential equations},
\newblock Elsevier Science B.V., Amsterdam 2006.

\bibitem{kozubo}
T.~J. Kozubowski \and K.~Podg{\'o}rski,
\newblock {\it Distributional properties of the negative binomial {L}\'evy process},
\newblock Probab. Math. Statist. 29 (2009), pp. 43--71.

\bibitem{lask}
N.~Laskin,
\newblock {\it Fractional {P}oisson process},
\newblock Commun. Nonlinear Sci. Numer. Simul. 8 (2003), pp. 201--213.

\bibitem{laskapp}
N.~Laskin,
\newblock {\it Some applications of the fractional {P}oisson probability
  distribution},
\newblock J. Math. Phys. 50 (2009), pp. 113513.

\bibitem{LRD2014}
N.~N. Leonenko, M.~M. Meerschaert, R.~L. Schilling, and A.~Sikorskii,
\newblock {\it Correlation structure of time-changed l\'evy processes},
\newblock Commun. Appl. Ind. Math. (2014).

\bibitem{mainardibook}
F.~Mainardi,
\newblock {\it Fractional Calculus and Waves in Linear Viscoelasticity},
\newblock Imperial College Press, London 2010.

\bibitem{main}
F.~Mainardi, R.~Gorenflo, \and E.~Scalas,
\newblock {\it A fractional generalization of the {P}oisson processes},
\newblock Vietnam J. Math. 32 (2004), pp. 53--64.

\bibitem{matsax}
A.~M. Mathai, R.~K. Saxena, \and H.~J. Haubold,
\newblock {\it  The {$H$}-function},
\newblock Springer, New York 2010.

\bibitem{mnv}
M.~M. Meerschaert, E.~Nane, \and P.~Vellaisamy,
\newblock { \it The fractional {P}oisson process and the inverse stable subordinator},
\newblock  Electron. J. Probab. 16 (2009), pp. 1600--1620.

\bibitem{MeerStrak13}
M.~M. Meerschaert \and P.~Straka,
\newblock {\it Inverse stable subordinators},
\newblock Math. Model. Nat. Phenom. 8 (2013), pp. 1--16.

\bibitem{sfpp}
E.~Orsingher \and F.~Polito,
\newblock {\it The space-fractional {P}oisson process},
\newblock Statist. Probab. Lett. 82 (2012), pp. 852--858.


\bibitem{sato}
K.-i. Sato,
\newblock {\it L\'evy {P}rocesses \and {I}nfinitely {D}ivisible
  {D}istributions}, 
\newblock Cambridge University Press, Cambridge 1999.

\bibitem{spfuntemme}
N.~M. Temme,
\newblock {\it Special functions},
\newblock A Wiley-Interscience Publication. John Wiley \& Sons Inc., New York
  1996.

\bibitem{kumhit}
P.~Vellaisamy \and A.~Kumar,
\newblock {\it Hitting times of an inverse {G}aussian process},
\newblock Submitted. arXiv:1105.1468 [math.PR] (2013).

\bibitem{VellaiSree2010}
P.~Vellaisamy \and M.~Sreehari,
\newblock {\it Some intrinsic properties of the gamma distribution},
\newblock J. Japan Statist. Soc. 40 (2010), pp. 133--144.


\end{thebibliography}
\end{document}